\documentclass{mcom-l}

\usepackage{amsmath,amsfonts,amssymb,colortbl,float,graphicx}
\usepackage[numbers,sort&compress]{natbib}
\usepackage{enumerate,array,multirow,bbm}
\usepackage{booktabs,siunitx}

\newcommand{\bx}{{\bf x}}

\newcolumntype{K}[1]{>{\centering\arraybackslash}p{#1}}
\graphicspath{{Fig/}}

\theoremstyle{thmstyleone}%
\newtheorem{theorem}{Theorem}

\newtheorem{lemma}[theorem]{Lemma}
\newtheorem{remark}{Remark}%

\numberwithin{equation}{section}

\begin{document}

\title[A uniformly accurate MTI for nonlinear Klein-Gordon equation]{A uniformly accurate multiscale time integrator for the nonlinear Klein-Gordon equation in the nonrelativistic regime via simplified transmission conditions}

\author[W. Bao]{Weizhu Bao}
\address{Department of Mathematics, National University of Singapore, Singapore 117076, Singapore}

\email{matbaowz@nus.edu.sg}
\thanks{The work of the first author was partially supported by the Ministry of Education of Singapore
	under its AcRF Tier 1 funding A-8003584-00-00.}

\author[C. Liu]{Caoyi Liu}
\address{Department of Mathematics, National University of Singapore, Singapore 117076, Singapore}
\email{e1353374@u.nus.edu}
\thanks{}

\subjclass[2020]{Primary 65M15, 65M70, 81Q05}
\keywords{nonlinear Klein-Gordon equation, nonrelativistic regime, multiscale decomposition, transmission conditions, uniformly accurate}

\date{}

\dedicatory{}

\begin{abstract}
	We propose a new and simplified multiscale time integrator Fourier pseudospectral (MTI-FP) method for the nonlinear Klein-Gordon equation (NKGE) with a dimensionless parameter $\varepsilon\in(0,1]$ inversely proportional to the speed of light, and establish its uniform first-order accuracy in time in the nonrelativistic regime, i.e. $0 < \varepsilon \ll 1$. In this regime, the solution of the NKGE is highly oscillatory in time with $O(\varepsilon^2)$-wavelength, which brings significant difficulties in designing uniformly accurate numerical methods. The MTI-FP is based on (i) a multiscale decomposition by frequency of the NKGE in each time interval with simplified transmission conditions, and  (ii) an exponential wave integrator for temporal discretization and a Fourier pseudospectral method for spatial discretization. By adapting the energy method and the mathematical induction, we obtain two error bounds in $H^1$-norm at $O(h^{m_0} + \tau^2 / \varepsilon^2 )$ and $O(h^{m_0} + \tau + \varepsilon^2)$ with mesh size $h$, time step $\tau$ and $m_0$ an integer dependent on the regularity of the solution, which immediately implies a uniformly accurate error bound $O(h^{m_0} + \tau)$ with respect to $\varepsilon\in(0,1]$. 
In addition, by adopting a linear interpolation of the micro variables with the multiscale decomposition in each time interval, we obtain a uniformly accurate numerical solution for any time $t\ge0$.
Thus the proposed MTI-FP method has a super resolution property in time in terms of the Shannon sampling theory, i.e. accurate numerical solutions can be obtained even when the time step is much bigger than the $O(\varepsilon^2)$-wavelength. Extensive numerical results are reported to confirm our error bounds and demonstrate their super resolution in time. Finally the proposed MTI-FP method is applied to study numerically convergence rates of the NKGE to its different limiting models in the nonrelativistic regime.
\end{abstract}

\maketitle

\section{Introduction}
Consider the dimensionless nonlinear Klein-Gordon equation (NKGE) with the cubic nonlinearity 
in $d$-dimensions $(d = 1,2,3)$ \citep{li2025stability,scheider2020breather,
bao2019comparison,bao2012analysis,bainov1995nonexistence}
\begin{subequations}\label{1.1}
\begin{align}
&\varepsilon^2 \partial_{tt} u(\bx,t) - \Delta u(\bx,t) + \displaystyle \frac{1}{\varepsilon^2} u(\bx,t) + \lambda u(\bx,t)^3 = 0 , \quad \bx \in \mathbb{R}^d, \quad t > 0, \label{1.1a}\\
&u(\bx,0) = \phi_1(\bx), \quad \partial_t u(\bx,0) = \displaystyle \frac{1}{\varepsilon^2} \phi_2(\bx), \quad \bx \in \mathbb{R}^d, \label{1.1b}
\end{align}
\end{subequations}
where $t$ is the time, $\bx$ is the spatial variable, $u:=u(\bx,t)$ is a real-valued scalar field, $\lambda \in \mathbb{R}$ is a given constant, $0 < \varepsilon \le 1$ is a dimensionless parameter which is inversely proportional to the speed of light, and  $\phi_1(\bx)$ and $\phi_2(\bx)$ are two given real-valued initial data which are independent of $\varepsilon$.  

The NKGE \eqref{1.1} is known as a relativistic extension of the nonlinear Schr\"odinger equation \citep{masmoudi2002nonlinear,machihara2002nonrelativistic} and it has wide applications \citep{ohlsson2011relativistic,nakamura2001cauchy,brenner1981global}, which is used to describe the dynamics of spinless particles in quantum fields \citep{maireche2021solutions,pilkuhn2006klein} and serves as a model for dark-matter or black-hole evaporation problems in cosmology \citep{detweiler1980klein,ahmed2020generalized,bachelot2011klein}. It is time symmetric and conserves the energy \citep{bao2012analysis,bao2019comparison,machihara2002nonrelativistic,nakamura2001cauchy}, i.e. for $t\ge0$
\begin{align}
    E(t) & := \int_{\mathbb{R}^d} \left[  \varepsilon^2 | \partial_t u(\bx,t) |^2 + | \nabla u(\bx,t) |^2 + \displaystyle \frac{1}{\varepsilon^2} | u(\bx,t) |^2 +   \frac{\lambda}{2} | u(\bx,t) |^4 \right] d \bx \nonumber \\
    & \equiv \int_{\mathbb{R}^d} \left[  \displaystyle \frac{1}{\varepsilon^2} | \phi_2(\bx)|^2 + | \nabla \phi_1(\bx) |^2 + \displaystyle \frac{1}{\varepsilon^2} | \phi_1(\bx) |^2 +  \frac{\lambda}{2} | \phi_1(\bx) |^4 \right] d \bx 
    := E(0).
    \label{1.2}
\end{align}
For a fixed $\varepsilon = O(1) $, e.g. $\varepsilon = 1$, the NKGE has been well studied both analytically and numerically; see \citep{morawetz1972decay,jimenez1990analysis,duncan1997sympletic,li2025stability,nakamura2001cauchy,brenner1981global,strauss1978numerical,wang2022symmetric,bao2022uniform,cohen2008conservation} and references therein. 

On the contrary, when $0 < \varepsilon \ll 1$, i.e. in the nonrelativistic regime, due to that the energy $E(t) = O(\varepsilon^{-2})$ in \eqref{1.2} becomes unbounded as $\varepsilon \to 0$, the NKGE \eqref{1.1}
 is a singular limit problem when $\varepsilon \to 0$.  This nonrelativistic limit problem has been well studied asymptotically and analytically in the literatures \citep{tsutsumi1984nonrelativistic,najman1990nonrelativistic,machihara2002nonrelativistic,masmoudi2002nonlinear}.
In fact, when $0 < \varepsilon \ll 1$, the solution of the NKGE \eqref{1.1} can be decomposed as \citep{machihara2002nonrelativistic,masmoudi2002nonlinear}
\begin{equation}
u(\bx,t) = e^{it/\varepsilon^2} v(\bx,t) +e^{-it/\varepsilon^2}\overline{v(\bx,t)}  + r(\bx,t),\quad \bx \in \mathbb{R}^d, \quad t\ge0,
\label{1.3}
\end{equation}
with $i=\sqrt{-1}$ being the complex unit and 
$\bar{f}$ denoting the complex conjugate of $f$, where $v:=v(\bx,t)$ satisfies the nonlinear Sch\"{o}dinger equation (NLSE) or  the nonlinear Sch\"{o}dinger equation with wave operator (NLSW) as a limiting model, and the remainder $r:=r(\bx,t)=O(\varepsilon^2)$ \citep{tsutsumi1984nonrelativistic,najman1990nonrelativistic,machihara2002nonrelativistic,bao2014uniformly}. Based on these results, it is clear that, in the nonrelativistic regime, the solution of the NKGE \eqref{1.1} is highly oscillatory in time with $O(\varepsilon^2)$-wavelength, which brings significant challenges in designing and analyzing efficient and accurate numerical methods that
are uniformly accurate for $\varepsilon\in(0,1]$, especially in the nonrelativistic regime with $0<\varepsilon\ll1$. 



Over the last decade, great efforts have been devoted to analyze existing numerical methods and to propose new uniformly accurate numerical methods for the NKGE \eqref{1.1} in the nonrelativistic regime, with particular attention on how their error bounds depend explicitly on the mesh size $h$ and time step $\tau$ as well as the small parameter $\varepsilon$. 
Those error bounds can indicate spatial/temporal resolution (or meshing strategy) of different numerical methods for the NKGE \eqref{1.1} in the nonrelativistic regime.  In fact, different numerical methods have their advantages and disadvantage. The classical finite difference time domain (FDTD) methods \citep{bao2012analysis,duncan1997sympletic,bao2019comparison} perform very well when $\varepsilon =O(1)$, but
require meshing strategy $h = O(1)$ and $\tau = O(\varepsilon^3)$ \citep{bao2012analysis} in the nonrelativistic regime, and thus they are {\bf under-resolution} methods in time in terms of the Shannon sampling theory, i.e. they need $O(\varepsilon^{-1})\gg1$ grid points to capture per wave in time when $0<\varepsilon\ll1$. The limiting integrators (LIs) \citep{faou2014asymptotic} perform well when $0<\varepsilon \ll1$, but they fail to converge with $O(1)$ errors when $\varepsilon=O(1)$.  Both the exponential wave integrators (EWI) \citep{bao2012analysis,wang2022symmetric,bao2019comparison} and the time-splitting methods \citep{dong2014time,bao2019comparison} perform very well when $\varepsilon =O(1)$, and they
require meshing strategy $h = O(1)$ and $\tau = O(\varepsilon^2)$ in the nonrelativistic regime, 
and thus they are  {\bf optimal-resolution} methods in time in terms of the Shannon sampling theory, i.e. it needs $O(1)$ grid points to capture per wave in time when $0<\varepsilon\ll1$. In fact, all the above numerical methods 
are non-uniformly accurate numerical methods for the NKGE \eqref{1.1} with respect to $\varepsilon \in(0,1]$.
In recent years, several uniformly accurate numerical methods have been proposed and analyzed, including the multiscale time integrators (MTI) \citep{bao2014uniformly,bao2012uniformly}, the two-scale formulation (TSF) \citep{chartier2015uniformly} methods, multi-revolution composition (MRC) \citep{mauser2020rotating} methods and the uniformly accurate low regularity integrators (UALRI) \citep{calvo2022uniformly}. All of them share the same meshing strategy with $h=O(1)$ and $\tau=O(1)$ in the nonrelativistic regime, and thus they are {\bf super-resolution} methods in time in terms of the Shannon sampling theory, i.e. accurate solutions can be obtained even when the time step is much bigger than the temporal wavelength at $O(\varepsilon^2)$!

The main aim of this paper is to propose a new and simplified multiscale time integrator Fourier pseudospectral (MTI-FP) method for the NKGE \eqref{1.1} by adapting a multiscale decomposition by frequency of the NKGE in each time interval with simplified transmission conditions at each time step. We obtain two error bounds in $H^1$-norm at $O(h^{m_0} + \tau^2 / \varepsilon^2 )$ and $O(h^{m_0} + \tau + \varepsilon^2)$ with $m_0$ an integer dependent on the regularity of the solution by adapting two different analysis techniques, i.e. the energy method and the mathematical induction, respectively. From the two independent error bounds, we can immediately establish a uniformly accurate error bound at $O(h^{m_0} + \tau)$ with respect to $\varepsilon\in(0,1]$, which implies that the proposed MTI-FP method has super resolution property in time. Compared to the MTIs in the literature \citep{bao2014uniformly,bao2012uniformly}, the proposed MTI-FP has three main advantages: (i) the numerical scheme is significantly simplified and thus the computational cost is significantly reduced in practical computation,  (ii)
the regularity requirement is significantly weakened in order to obtain the same uniformly first order error bound in time, and (iii) it achieves optimal spatial accuracy with respect to the given regularity of the solution. 
In addition, by adopting a linear interpolation of the micro variables within the multiscale decomposition, 
i.e. $v$ and $r$ in \eqref{1.3}, in each time interval, we obtain a uniformly accurate approximation of the solution for any time $t\ge0$ with almost no additional computational cost. This post process for constructing numerical solution at any time $t\ge0$ with almost no additional computational cost is only applied to the MTI-FP methods, but not for the other uniformly accurate numerical methods, such as the TFS methods, MRC methods and the UALRI, in the literatures.



The paper is organized as follows. In Section~\ref{sec2}, we introduce the multiscale decomposition for the NKGE \eqref{1.1} with simplified transmission conditions. The corresponding MTI-FP method is proposed in Section~\ref{sec3}, and its rigorous error estimates are established in Section~\ref{sec4}. A multiscale interpolation is presented in Section \ref{sec5},  and numerical results are reported in Section~\ref{sec6}.
Finally, some conclusions are given in Section~\ref{sec7}. Throughout this paper, we adopt the standard Sobolev spaces and their corresponding norms, use c.c. to denote complex conjugate of the term in front of it,  and adopt $A \lesssim B$ to represent that there exists a generic constant $C > 0$ independent of $\varepsilon$, $h$ and $\tau$, such that $|A| \le CB$.


\section{Multiscale decompositions by amplitude and/or frequency} \label{sec2}  
In this section, we present different multiscale decompositions of the solution $u(\bx,t)$ of the NKGE 
\eqref{1.1} over $[0,T]$. 

Plugging the ansatz \eqref{1.3} into the NKGE \eqref{1.1}, after some simple computations, we have the following equation
\begin{align}
  0=&\varepsilon^2 \partial_{tt} u(\bx,t) - \Delta u(\bx, t) + \displaystyle \frac{1}{\varepsilon^2} u(\bx, t) + \lambda u(\bx, t)^3  \nonumber \\ 
  =&e^{it / \varepsilon^2} \left[ \varepsilon^2 \partial_{tt} v(\bx,t)+ 2i \partial_t v(\bx,t) - \Delta v(\bx,t) + 3 \lambda |v(\bx,t)|^2 v(\bx,t) \right]+{\rm c.c.} \nonumber \\ 
     &+\varepsilon^2 \partial_{tt} r(\bx,t) - \Delta r(\bx,t) + \frac{1}{\varepsilon^2} r(\bx,t) + \lambda 
     e^{3it / \varepsilon^2}v(\bx,t)^3 
     +\lambda e^{-3it / \varepsilon^2}\overline{v(\bx,t)}^3\nonumber\\
     &+ f(v(\bx,t),r(\bx,t),t),
     \label{2.1}
\end{align}
with
\begin{align}
f(v,r,t):=\lambda r\left[  r^2 + 3  r \left(e^{it / \varepsilon^2}v +e^{-it / \varepsilon^2} \overline{v} \right) + 3 \left( e^{it / \varepsilon^2}v + e^{-it / \varepsilon^2}\overline{v} \right)^2 \right],
\label{2.2}
\end{align}
and the following initial conditions
\begin{subequations}\label{init01}
\begin{align} 
&\phi_1(\bx) = v(\bx,0) + \overline{v(\bx,0)} + r(\bx,0), \quad \bx \in \mathbb{R}^d, \\[3pt]
&\displaystyle\frac{1}{\varepsilon^2}\phi_2(\bx) = \frac{i}{\varepsilon^2}\left( v(\bx,0) - \overline{v(\bx,0)} \right) + \partial_t v(\bx,0) + \overline{\partial_t v(\bx,0)} + \partial_t r(\bx,0).
\label{2.3b}
\end{align}
\end{subequations}
In order to make the remainder term $r$ to be small, e.g. $r=O(\varepsilon^2)$, we request 
\begin{equation}\label{rbx0}
r(\bx,0)\equiv 0  \quad \Rightarrow \quad v(\bx,0) + \overline{v(\bx,0)}=\phi_1(\bx) = O(1), \quad \bx \in \mathbb{R}^d.
\end{equation}
Due to the scale separation in \eqref{2.3b} when $0<\varepsilon\ll1$, we request
\begin{equation}
\label{ptrbx0}
v(\bx,0) - \overline{v(\bx,0)} =-i\phi_2(\bx), \quad \partial_t v(\bx,0) + \overline{\partial_t v(\bx,0)} + \partial_t r(\bx,0) = 0, \quad \bx \in \mathbb{R}^d.
\end{equation}
Combining \eqref{rbx0} and \eqref{ptrbx0}, we get
\begin{align}\label{initv23}
&v(\bx,0)=\frac{1}{2} \left[\phi_1 (\bx) - i \phi_2(\bx) \right]:=v_0(\bx), \quad \quad \bx \in \mathbb{R}^d,\\
&r(\bx,0)\equiv 0, \quad  \partial_t r(\bx,0)=-\partial_t v(\bx,0) - \partial_t \overline{v(\bx,0)}. 
\label{initr23}
\end{align}

By adopting a multiscale decomposition by amplitude with $O(1)$ and frequency with $\varepsilon^{-2}$ in \eqref{2.1}, we introduce two different decompositions and corresponding limiting models. 

Noting \eqref{initv23}-\eqref{initr23},
we request that $v$ satisfies the following nonlinear Schr\"{o}dinger equation (NLSE) with the initial condition \eqref{initv23}
\begin{align}\label{NLSE1}
2i \partial_t v(\bx,t) - \Delta v(\bx,t) + 3 \lambda |v(\bx,t)|^2 v(\bx,t)=0, \quad \quad \bx \in \mathbb{R}^d, \quad t > 0.
\end{align}
Combining \eqref{2.1} and \eqref{NLSE1}, noting \eqref{initv23}-\eqref{initr23} and \eqref{NLSE1} with $t=0$, we get that the remainder $r$ satisfies the nonlinear Klein-Gorden equation with source terms and small initial data:
\begin{subequations}\label{NKGEr1}
\begin{align}
&\varepsilon^2 \partial_{tt} r - \Delta r + \frac{1}{\varepsilon^2} r + f(v,r,t)+\left[\lambda 
     e^{3it / \varepsilon^2}v^3 
     +\varepsilon^2 e^{it / \varepsilon^2} \partial_{tt} v+{\rm c.c.}\right]=0,\\
&r(\bx,0)\equiv 0, \quad  \partial_t r(\bx,0)=\frac{1}{8}\left[4\Delta \phi_2(\bx)-3\lambda (\phi_1(\bx)^2+\phi_2(\bx)^2)\phi_2(\bx)\right].
\end{align}
\end{subequations}
It is easy to check that, if $v$ and $r$ are solutions of \eqref{NLSE1} with \eqref{initv23} and \eqref{NKGEr1}, respectively, then $u$ in \eqref{1.3} is a solution of the NKGE \eqref{1.1}. Thus the NLSE \eqref{NLSE1} with \eqref{initv23} is a limiting model of the NKGE \eqref{1.1} in the nonrelativistic regime. 
The convergence from the NKGE \eqref{1.1} to the NLSE \eqref{NLSE1} with \eqref{initv23} and the corresponding quadratic convergence rate with respect to $\varepsilon$ have been studied analytically and numerically
in the literatures \citep{bao2014uniformly,lei2023non,machihara2002nonrelativistic,masmoudi2002nonlinear}. We remark here that, due to the resonance term $e^{it / \varepsilon^2} \partial_{tt} v$ in 
\eqref{NKGEr1}, the amplitude of its solution $\|r(\cdot, t)\|_{L^\infty}$ may grow linearly or even quadratically in time \citep{lei2023non,machihara2002nonrelativistic,masmoudi2002nonlinear}, which might cause stability and efficiency issues in adopting the above multiscale decomposition with the NLSE \eqref{NLSE1} with \eqref{initv23} and the NKGE \eqref{NKGEr1} for designing uniformly accurate numerical methods for the original NKGE \eqref{1.1}.

To overcome the above difficulty, another multiscale decomposition by frequency for the NKGE \eqref{1.1} reads:
$v$ satisfies the following nonlinear Schr\"{o}dinger equation with wave operator (NLSW) as
\begin{subequations}\label{NLSW1}
\begin{align}
&\varepsilon^2 \partial_{tt} v(\bx,t)+2i \partial_t v(\bx,t) - \Delta v(\bx,t) + 3 \lambda |v(\bx,t)|^2 v(\bx,t)=0,\label{NLSW1eq} \\
&v(\bx,0)=v_0(\bx) = \frac{1}{2} \left[\phi_1 (\bx) - i \phi_2(\bx) \right], \quad \partial_t v(\bx,0)=\gamma(\bx),
\label{NLSW1eq22}
\end{align}
\end{subequations}
 and the remainder $r$ satisfies the nonlinear Klein-Gorden equation with source terms and small initial data:
\begin{subequations}\label{NKGEr2}
\begin{align}
&\varepsilon^2 \partial_{tt} r - \Delta r + \frac{1}{\varepsilon^2} r + f(v,r,t)+\left[\lambda 
     e^{3it / \varepsilon^2}v^3 +{\rm c.c.}\right]=0,\\
&r(\bx,0)\equiv 0, \quad  \partial_t r(\bx,0)=-\gamma(\bx)-\overline{\gamma(\bx)},
\end{align}
\end{subequations}
where $\gamma(\bx)$ is a given complex-valued function to be specified later. Again, it is easy to check that, if $v$ and $r$ are solutions of the NLSW \eqref{NLSW1} and NKGE \eqref{NKGEr2}, respectively, then $u$ in \eqref{1.3} is a solution of the NKGE \eqref{1.1}. Thus the NLSW \eqref{NLSW1} is a semi-limiting model of the NKGE \eqref{1.1} when $\varepsilon\to0$. 
The global-in-time quadratic convergence from the NKGE \eqref{1.1} to the NLSW \eqref{NLSW1} 
with respect to $\varepsilon$ have been studied analytically and numerically
in the literatures \citep{bao2014uniformly,lei2023non,machihara2002nonrelativistic,masmoudi2002nonlinear}. 

Different $\gamma(\bx)$ can be used in the multiscale decomposition by frequency \eqref{NLSW1} and \eqref{NKGEr2}.
In \citep{bao2014uniformly}, $\gamma(\bx)$ is taken as the well-prepared initial data of the NLSW, i.e. it is obtained by 
setting $t=0$ and $\varepsilon\to0$ in \eqref{NLSW1eq}, as\begin{align}
\gamma(\bx)&=\frac{i}{2}\left[-\Delta v(\bx,0) + 3 \lambda |v(\bx,0)|^2 v(\bx,0)\right]\nonumber\\
&=\frac{i}{2}\left[-\Delta v_0(\bx) + 3 \lambda |v_0(\bx)|^2 v_0(\bx)\right]:=v_1(\bx), \quad \bx \in \mathbb{R}^d.\label{2.12}
\end{align}
Based on this well-prepared initial data for the NLSW, formally one can show that
$v(\bx,t)$, $\partial_t v(\bx,t)$ and $\partial_{tt} v(\bx,t)$ are uniformly bounded with respect to $\varepsilon\in(0,1]$ \citep{bao2014uniformly}. Based on this choice of $\gamma(\bx)$ for the multiscale decomposition by frequency \eqref{NLSW1} and \eqref{NKGEr2}, a multiscale time integrator Fourier pseudospectral (MTI-FP) method was presented and analyzed in the literature \citep{bao2014uniformly}. On the other hand, we can take the most simplified initial condition as $\gamma(\bx)\equiv0$, which immediately implies $\partial_t v(\bx,0)=\partial_t r(\bx,0)\equiv 0$ in \eqref{NLSW1} and \eqref{NKGEr2}, i.e. the initial conditions for $v$ and $r$ are taken as homogeneous except $v(\bx,0)$.
Based on this most simplified initial condition, we will present a new and simplified MTI-FP method for the NKGE \eqref{1.1}  and 
establish its uniform first order accuracy in time with respect to $\varepsilon\in(0,1]$. Compared to the MTI-FP method in the literature \citep{bao2014uniformly}, 
due to the adopted homogeneous initial conditions, the proposed MTI-FP method is significantly simplified and thus the computational cost is greatly reduced in practical computation, and
the regularity requirement is significantly weakened in order to obtain the same uniformly first 
order error bound in time.

\section{A uniformly accurate MTI-FP method} \label{sec3}

 In this section, we present a multiscale time integrator Fourier pseudospectral (MTI-FP) method for the 
 NKGE \eqref{1.1} based on the multiscale decomposition by frequency \eqref{NLSW1} and \eqref{NKGEr2} 
 with $\gamma(\bx)\equiv0$. 
 
 For simplicity of notation and without loss of generality, we only present the MTI-FP method in one dimension (1D).
Generalization to high dimensions is straightforward by tensor product. As adopted in the literatures \citep{bao2014uniformly,faou2014asymptotic,bao2012analysis,duncan1997sympletic}, 
the NKGE \eqref{1.1} with $d = 1$ is usually truncated onto a bounded interval 	$\Omega=(a,b)$ ($|a|$ and $b$ are taken large enough such that the truncation error is negligible) with periodic boundary conditions
\begin{subequations}
\begin{align}
&\varepsilon^2 \partial_{tt} u(x,t) - \partial_{xx} u(x,t) + \displaystyle \frac{1}{\varepsilon^2} u(x,t) + \lambda u(x,t)^3 = 0 , \quad x \in \Omega, \quad t > 0, \\
&u(x,0) = \phi_1(x), \quad \partial_t u(x,0) = \displaystyle \frac{1}{\varepsilon^2} \phi_2(x), \quad x \in \overline{\Omega},\\
&u(a,t)=u(b,t), \qquad \partial_x u(a,t)=\partial_x u(b,t), \qquad t\ge0.
\end{align}
\label{NKGE1d}
\end{subequations}

\subsection{A multiscale decomposition via simplified transmission conditions}

Let $\tau=\Delta t>0$ be the time step size, and denote time levels by $t_n = n \tau$ for $n = 0,1,...$.
We first present a multiscale decomposition by frequency for the solution of \eqref{NKGE1d} on the time
interval $[t_n,t_{n+1}]$ with given initial data at $t=t_n$ as
\begin{equation}\label{inittn}
u(x,t_n)=\phi_1^n(x) =O(1), \qquad \partial_t u(x,t_n)=\frac{1}{\varepsilon^2}\phi_2^n(x)=O\left(\frac{1}{\varepsilon^2}\right), \qquad 
x\in \overline{\Omega}.
\end{equation}
Similar to those in the previous sections, we take an ansatz to the solution $u(x, t) := u(x, t_n + s)$ of
\eqref{NKGE1d} on the interval $[t_n,t_{n+1}]$ with \eqref{inittn} as
\begin{equation}
u(x,t_n+s) = e^{is/\varepsilon^2} v^n(x,s) +e^{-is/\varepsilon^2}\overline{v^n(x,s)}  + r^n(x,s),\quad x\in \overline{\Omega}, \quad 0\le s\le \tau.
\label{ansatz1D}
\end{equation}
Then a multiscale decomposition by frequency with simplified transmission conditions 
for the NKGE \eqref{NKGE1d} on the interval $[t_n,t_{n+1}]$ reads:
$v^n:=v^n(x,s)$ satisfies the following nonlinear Schr\"{o}dinger equation with wave operator (NLSW) with simplified transmission conditions as
\begin{subequations}\label{NLSWv1d}
\begin{align}
&\varepsilon^2 \partial_{ss} v^n(x,s) + 2i \partial_s v^n(x,s) - \partial_{xx} v^n(x,s) +3\lambda |v^n(x,s)|^2v^n(x,s)   =0, \\
&v^n (x,0) = \displaystyle\frac{1}{2} \left[\phi_1^n (x) - i \phi_2^n(x) \right], \quad  \partial_s v^n (x,0) = 0, \quad x \in \overline{\Omega},\label{NLSWv1d(b)}\\
&v^n(a,s) = v^n(b,s), \quad  \partial_x v^n(a,s) = \partial_x v^n(b,s), \quad 0 \le s \le \tau,
\end{align}
\end{subequations}
and the remainder $r^n:=r^n(x,s)$ satisfies the nonlinear Klein-Gorden equation 
with source terms and homogeneous initial data as
\begin{subequations}
\begin{align}
&\varepsilon^2 \partial_{ss}r^n - \partial_{xx} r^n + \displaystyle \frac{1}{\varepsilon^2} r^n +f(v^n,r^n,s)+
\left[\lambda e^{3is / \varepsilon^2} (v^n)^3 + {\rm c.c.} \right]=0, \\
&r^n(x,0) = 0,\quad \partial_s r^n(x,0) = 0, \quad x \in \overline{\Omega},\\
&r^n(a,s) = r^n(b,s), \quad  \partial_x r^n(a,s) = \partial_x r^n(b,s), \quad 0 \le s \le \tau,
\end{align}
\label{NKGEr1d}
\end{subequations}
where $f(v^n,r^n,s)$ is given in \eqref{2.2}.

\subsection{An exponential wave integrator Fourier pseudospectral full discretization}

Denote the mesh size as $h : = (b-a)/N$ with $N$ a positive even integer and grid points as $x_j := a + jh$ for $j = 0,1,...,N$. Define
\begin{align*}
   & X_N := \text{span} \left\{ e^{i\mu_l(x - a)} \ | \ l = \displaystyle -\frac{N}{2},..., \frac{N}{2} - 1 \right\} \quad \text{with} \ \ \mu_l : = \displaystyle \frac{2\pi l}{b-a}, \\
   & Y_N := \left\{ \mathbf{v} = (v_0,v_1,...,v_N)^T \in \mathbb{C}^{N+1} \ | \ v_0 = v_N  \right\} \  \text{with}  \ \left\|   \mathbf{v}
   \right\|_{l^2} : = \left( h \sum_{j = 0}^{N-1} | v_j|^2 \right)^{1/2}.
\end{align*}
For a periodic function $v(x)$ on $\overline{\Omega}$ and a vector $\mathbf{v} \in Y_N$, let $P_N:L^2(\Omega) \rightarrow X_N$ denote the standard $L^2 $-projection operator, and let $I_N: C(\Omega) \rightarrow X_N$ or $Y_N \rightarrow X_N$ be the trigonometric interpolation operator, i.e.,
\begin{align}
    (P_Nv)(x) = \sum_{l = -N / 2}^{N / 2 - 1} \widehat{v}_l e^{i \mu_l ( x - a)}, \ \ (I_N \mathbf{v})(x) = \sum_{l = -N / 2}^{N / 2 - 1} \widetilde{\mathbf{v}}_l e^{i \mu_l (x-a)}, \ \ a \le x \le b, 
\end{align}
where $\widehat{v}_l$ and $\widetilde{\mathbf{v}}_l$ are the Fourier and discrete Fourier transform coefficients of the periodic function $v(x)$ and the vector $\mathbf{v}$, respectively, defined as 
\begin{align}
    \widehat{v}_l = \displaystyle \frac{1}{b-a} \int_a^b v(x) e^{-i \mu_l (x - a)} dx, \quad \widetilde{\mathbf{v}}_l = \frac{1}{N} \sum_{j = 0}^{N-1} v_j e^{-i \mu_l (x_j - a)},
    \label{Fourier}
\end{align}
with $v_j:=v(x_j)$ for $j=0,1,\ldots,N$ for the periodic function $v(x)$.

To discretize the equations \eqref{NLSWv1d} and \eqref{NKGEr1d} numerically, we apply exponential wave integrators in time \citep{bao2012analysis} and Fourier spectral/pseudospectral method \citep{bernardi1997spectral,bao2014uniformly,shen2011spectral} in space.  
In the discretization, we first apply the Fourier spectral method for discretizing \eqref{NLSWv1d} and \eqref{NKGEr1d}. Our objective is to find $v^n_N(x,s) , \ r^n_N(x,s) \in X_N$ for $0 \le s \le \tau$, i.e.
\begin{align}
    v^n_N(x,s)  = \sum_{l = -N / 2}^{N / 2 - 1} \widehat{(v^n_N)}_l (s) e^{i \mu_l ( x - a)}, \quad r^n_N(x,s)  = \sum_{l = -N / 2}^{N / 2 - 1} \widehat{(r^n_N)}_l (s) e^{i \mu_l ( x - a)},
    \label{Fsol}
\end{align}
such that \eqref{Fsol} is the solution of the NLSW \eqref{NLSWv1d} and the remainder equation \eqref{NKGEr1d}. 

\noindent Expand the exact solutions $v^n$ and $r^n$ in Fourier series
\begin{align*}
   v^n(x,s)  = \sum_{l \in \mathbb{Z}} \widehat{(v^n)}_l (s) e^{i \mu_l ( x - a)}, \quad r^n(x,s)  = \sum_{l \in \mathbb{Z}} \widehat{(r^n)}_l (s) e^{i \mu_l ( x - a)},\qquad 0 \le s \le \tau.
\end{align*}
By taking the Fourier transform on both sides of \eqref{NLSWv1d} and \eqref{NKGEr1d}, and noticing the orthogonality of  $e^{i \mu_l (x -a)} $, we obtain for $l = -\frac{N}{2},...,\frac{N }{2} - 1$
\begin{eqnarray}
&&\quad \varepsilon^2 \widehat{(v^n)}_l''(s) + 2i \widehat{(v^n)}_l'(s) + \mu_l^2 \widehat{(v^n)}_l (s)+ \widehat{(g^n)}_l (s) = 0, \quad 0 < s \le \tau, \\
 &&\quad \varepsilon^2 \widehat{(r^n)}_l''(s) + \varepsilon^2 \omega_l^2\widehat{(r^n)}_l (s) + 
 e^{\frac{3is}{\varepsilon^2}}\widehat{(h^n)}_l(s) +e^{\frac{-3is}{\varepsilon^2}} \widehat{(\overline{h^n})}_l(s) 
  + \widehat{(f^n)}_l (s) = 0, 
\end{eqnarray}
where $\omega_l = \frac{1}{\varepsilon^2} \sqrt{1 + \varepsilon^2 \mu_l^2}$ for $l = -\frac{N}{2},...,\frac{N }{2} - 1$,
 and 
\begin{equation}\label{gnhnfn}
g^n:=g(v^n(x,s)), \quad h^n:= h(v^n(x,s)), \quad f^n:= f(v^n(x,s),r^n(x,s),s), 
\end{equation}
with 
\begin{equation}\label{gvnhvn}
    g(v^n):=3 \lambda |v^n|^2 v^n, \qquad h(v^n):= \lambda (v^n)^3. 
\end{equation}

Noting the initial data in \eqref{NLSWv1d} and \eqref{NKGEr1d}, we obtain by Duhamel's principle
\begin{eqnarray}\label{Sol1.v}
&&\widehat{(v^n)}_l(s) = a_l(s) \widehat{(v^n)}_l(0)  - \int_0^s b_l(s- \theta) \widehat{(g^n)}_l (\theta) d\theta, \quad 0 \le s \le \tau,  \\
&&\widehat{(r^n)}_l (s) =\int_0^s \frac{\mathrm{sin}(\omega_l (\theta- s))}{\varepsilon^2 \omega_l} \left[ 
e^{\frac{3i \theta}{\varepsilon^2}} \widehat{(h^n)}_l(\theta) +e^{\frac{-3i\theta}{\varepsilon^2}} \widehat{(\overline{h^n})}_l(\theta)  + \widehat{(f^n)}_l(\theta)  \right] d\theta, 
\label{Sol1.r}
\end{eqnarray}
where
\begin{align}
    \begin{cases}
        a_l(s) := \displaystyle \frac{\lambda_l^+ e^{is \lambda_l^-} - \lambda_l^-e^{is \lambda_l^+}}{\lambda_l^+ - \lambda_l^-}, \quad b_l(s) := i \frac{e^{is \lambda_l^+} - e^{is \lambda_l^-}}{\varepsilon^2 ( \lambda_l^- - \lambda_l ^+)}, \quad 0 \le s \le \tau,\\[2.2pt]
        \lambda_l^{\pm} := -  \displaystyle\frac{1}{\varepsilon^2}\left[
  1 \pm \sqrt{1 + \varepsilon^2 \mu_l^2}\right], \qquad l=-\frac{N}{2},\ldots, \frac{N}{2}-1.
    \end{cases}
    \label{Coeffab}
\end{align}
Then differentiating \eqref{Sol1.v}-\eqref{Sol1.r} with respect to $s$, we have
\begin{eqnarray}\label{DSol1.v}
&&\widehat{(v^n)}_l'(s) =  a_l'(s) \widehat{(v^n)}_l(0)  -  \displaystyle \int_0^s b_l'(s- \theta) \widehat{(g^n)}_l (\theta) d\theta, \quad 0 \le s \le \tau, \\
&&\widehat{(r^n)}_l' (s)  =  \int_0^s \frac{\mathrm{cos}(\omega_l (\theta- s))}{-\varepsilon^2} \left[ 
e^{\frac{3i \theta}{\varepsilon^2}} \widehat{(h^n)}_l (\theta) + e^{\frac{-3i \theta}{\varepsilon^2}} \widehat{(\overline{h^n})}_l (\theta) + \widehat{(f^n)}_l (\theta) \right] d \theta,
\label{DSol1.r}
\end{eqnarray}
where
\begin{align}
    a_l'(s) = i \lambda_l^+ \lambda_l^- \displaystyle \frac{e^{is \lambda_l^-} - e^{is \lambda_l^+}}{\lambda_l^+ - \lambda_l^-}, \quad b_l'(s) = \frac{\lambda_l^+ e^{is\lambda_l^+} - \lambda_l^- e^{is \lambda_l^-}}{\varepsilon^2 ( \lambda_l^+ - \lambda_l^- )}, \quad 0 \le s \le \tau.
    \label{Coeffdab}
\end{align}

To approximate the integrals in \eqref{Sol1.v}-\eqref{Sol1.r} and \eqref{DSol1.v}-\eqref{DSol1.r}, we apply (i) the Gautschi's type quadrature \citep{hairer2006geometric,bao2012uniformly,gautschi1961numerical} for integrals involving $g^n$ and $h^n$, and (ii) the trapezoidal rule \citep{bao2012uniformly,gautschi1961numerical} for integrals involving $f^n$, e.g.,
\begin{align*}
& \int_0^sb_l(s - \theta)\widehat{(g^n)}_l(\theta)d\theta \approx \int_0^s b_l(s - \theta) \left[ \widehat{(g^n)}_l(0) + \theta \widehat{(g^n)}_l'(0)  \right] d\theta, \\
& \int_0^s\frac{\mathrm{sin}(\omega_l(\theta-s))}{\varepsilon^2 \omega_l} \widehat{(f^n)}_l(\theta) d\theta \approx - s\frac{\mathrm{sin}(\omega_l s)}{2\varepsilon^2 \omega_l} \widehat{(f^n)}_l(0).
\end{align*}
Thus by taking $s = \tau$, we obtain
\begin{equation}
\begin{aligned} 
& \widehat{(v^n)}_l(\tau) \approx a_l(\tau) \widehat{(v^n)}_l(0) - c_l(\tau) \widehat{(g^n)}_l (0), \\
& \widehat{(v^n)}_l'(\tau) \approx a_l'(\tau) \widehat{(v^n)}_l(0)  - c_l'(\tau) \widehat{(g^n)}_l (0), 
\end{aligned}\label{Solv1}  
\end{equation}
and
\begin{subequations}\label{Solr1}
\begin{align}
&\widehat{(r^n)}_l (\tau) \approx \displaystyle  - p_l(\tau)  \widehat{(h^n)}_l (0)  - \overline{p_l}(\tau)  \widehat{(\overline{h^n})}_l (0),  \\
&\widehat{(r^n)}_l' (\tau) \approx \displaystyle   - p_l'(\tau)  \widehat{(h^n)}_l (0)   - \overline{p_l'}(\tau)  \widehat{(\overline{h^n})}_l (0)  - \displaystyle \frac{\tau}{2\varepsilon^2} \widehat{(f^n)}_l(\tau),
\end{align}
\end{subequations}
where we apply the homogeneous transmission condition $\partial_s v^n(x,0) = 0$ and thus $\left.\partial_s g^n\right|_{s=0} =\left. \partial_s h^n\right|_{s=0} = 0$, and
\begin{align}
\begin{cases}
c_l(\tau) = \displaystyle \int_0^\tau b_l (\tau - \theta) d \theta, \quad &  p_l(\tau) = \displaystyle \int_0^{\tau}  \frac{\text{sin}(\omega_l (\tau - \theta))}{\varepsilon^2 \omega_l} e^{3i\theta / \varepsilon^2} d \theta,   \\[3pt]
c_l'(\tau) = \displaystyle \int_0^\tau b_l'(\tau - \theta) d\theta, \quad & p_l'(\tau) = \displaystyle \int_0^{\tau}  \frac{\text{cos}(\omega_l (\tau - \theta))}{\varepsilon^2 } e^{3i\theta / \varepsilon^2} d \theta.
 \end{cases}
 \label{Coeffcp}
\end{align}
Combining \eqref{Fsol}, \eqref{Solv1}-\eqref{Solr1} and the multiscale decomposition \eqref{ansatz1D}, we obtain the multiscale time integrator Fourier spectral (MTI-FS) method for the NKGE \eqref{NKGE1d}, i.e.,
\begin{subequations}
\begin{align}
\widehat{(v^n_N)}_l(\tau) & \approx a_l(\tau) \widehat{(v^n_N)}_l(0) - c_l(\tau) \widehat{(g^n_N)}_l (0),\\ 
\widehat{(v^n_N)}_l'(\tau) & \approx a_l'(\tau) \widehat{(v^n_N)}_l(0)  - c_l'(\tau) \widehat{(g^n_N)}_l (0),\\
\widehat{(r^n_N)}_l (\tau) & \approx \displaystyle  - p_l(\tau)  \widehat{(h^n_N)}_l (0)  - \overline{p_l}(\tau)  \widehat{(\overline{h^n_N})}_l (0), \qquad l = - \frac{N}{2}, ...,\frac{N}{2}-1, \\
\widehat{(r^n_N)}_l' (\tau) & \approx \displaystyle   - p_l'(\tau)  \widehat{(h^n_N)}_l (0)   - \overline{p_l'}(\tau)  \widehat{(\overline{h^n_N})}_l (0)  - \displaystyle \frac{\tau}{2\varepsilon^2} \widehat{(f^n_N)}_l(\tau),
\end{align}
\end{subequations}
with $g^n_N:=g(v^n_N(x,s))$, $h^n_N:= h(v^n_N(x,s))$ and $f^n_N:= f(v^n_N(x,s),r^n_N(x,s),s)$.

In practice, the integrals in \eqref{Fourier} are usually approximated by numerical quadratures \citep{bernardi1997spectral,shen2011spectral,gautschi1961numerical}. Let $u_j^n$ and $\dot{u}_j^n$ be approximations of $u(x_j,t_n)$ and $\partial_t u(x_j,t_n)$, respectively, and let $v^{n,1}_j$, $\dot{v}^{n,1}_j$, $r^{n,1}_j$,  $\dot{r}^{n,1}_j$ be approximations of $v^n(x_j,\tau)$, $\partial_s v^n(x_j,\tau)$, $r^n(x_j,\tau)$ and $\partial_s r^n(x_j,\tau)$, respectively, for $n = 0,1,..$ and $j = 0,1,...,N$. Denote $\mathbf{u}^n=(u_0^n,u_1^n,\ldots,u_N^n)^T\in Y_N$, $\dot{\mathbf{u}}^n=(\dot{u}_0^n,\dot{u}_1^n,\ldots,
\dot{u}_N^n)^T\in Y_N$, $\mathbf{v}^{n,1}=(v_0^{n,1},v_1^{n,1},\ldots$ $,v_N^{n,1})^T\in Y_N$, $\dot{\mathbf{v}}^{n,1}=(\dot{v}_0^{n,1},\dot{v}_1^{n,1},\ldots,\dot{v}_N^{n,1})^T\in Y_N$, $\mathbf{r}^{n,1}=(r_0^{n,1},r_1^{n,1},\ldots,r_N^{n,1})^T\in Y_N$ and $\dot{\mathbf{r}}^{n,1}=(\dot{r}_0^{n,1},$ $\dot{r}_1^{n,1},\ldots,\dot{r}_N^{n,1})^T\in Y_N$.
Then a multiscale time integrator Fourier pseudospectral (MTI-FP) method for the NKGE \eqref{NKGE1d} reads as
\begin{subequations}\label{MTIu}
    \begin{align}
&u_j^{n+1} = e^{i\tau / \varepsilon^2} v^{n,1}_j + e^{-i\tau / \varepsilon^2} \overline{v^{n,1}_j} + r_j^{n,1}, \quad j = 0,1,...,N, \\
&\dot{u}_j^{n+1} = e^{i\tau / \varepsilon^2} \left( \dot{v}_j^{n,1} + \displaystyle \frac{i}{\varepsilon^2} v^{n,1}_j \right) + e^{-i\tau / \varepsilon^2} \left( \overline{\dot{v}_j^{n,1}} - \displaystyle \frac{i}{\varepsilon^2} \overline{v^{n,1}_j} \right) + \dot{r}_j^{n,1},
    \end{align}
\end{subequations}
where
\begin{subequations}\label{MTIvr1}
\begin{align}
& v_j^{n,1} = \sum_{l = -N / 2}^{N/2-1} \widetilde{(\mathbf{v}^{n,1})}_l e^{i \mu_l (x_j - a)}, \quad & \dot{v}_j^{n,1} = \sum_{l = -N / 2}^{N/2 - 1} \widetilde{(\dot{\mathbf{v}}^{n,1})}_l e^{i \mu_l (x_j -a)}, \\[3pt]
&   r_j^{n,1} = \sum_{l = -N / 2}^{N/2-1} \widetilde{(\mathbf{r}^{n,1})}_l e^{i \mu_l (x_j - a)}, \quad & \dot{r}_j^{n,1} = \sum_{l = -N / 2}^{N/2 - 1} \widetilde{(\dot{\mathbf{r}}^{n,1})}_l e^{i \mu_l (x_j -a)},
    \end{align}
\end{subequations}
with the Fourier coefficients given by
\begin{subequations}\label{MTIvr2}
\begin{align}
 \widetilde{(\mathbf{v}^{n,1})}_l & = a_l(\tau) \widetilde{(\mathbf{v}^{n,0})}_l  - c_l(\tau) \widetilde{(\mathbf{g}^{n,0})}_l, \  
\widetilde{(\dot{\mathbf{v}}^{n,1})}_l = a_l'(\tau) \widetilde{(\mathbf{v}^{n,0})}_l  - c_l'(\tau) \widetilde{(\mathbf{g}^{n,0})}_l , \label{MTIvr2.v}\\
 \widetilde{(\mathbf{r}^{n,1})}_l & = \displaystyle  - p_l(\tau)  \widetilde{(\mathbf{h}^{n,0})}_l   - \overline{p_l}(\tau)  \widetilde{(\overline{\mathbf{h}^{n,0}})}_l , \qquad l = \displaystyle -\frac{N}{2},..., \frac{N}{2}-1,  \\
\widetilde{(\dot{\mathbf{r}}^{n,1})}_l &  = \displaystyle   - p_l'(\tau)  \widetilde{(\mathbf{h}^{n,0})}_l   - \overline{p_l'}(\tau)  \widetilde{(\overline{\mathbf{h}^{n,0}})}_l  
- \displaystyle \frac{\tau}{2\varepsilon^2} \widetilde{(\mathbf{f}^{n,1})}_l,
\end{align}
\end{subequations}
and
\begin{subequations}\label{MTIIni.1}
\begin{align}
&\mathbf{v}^{n,0}=(v^{n,0}_0,v^{n,0}_1,\ldots,v^{n,0}_N)^T\in Y_N, \quad 
\mathbf{g}^{n,0}=(g^{n,0}_0,g^{n,0}_1,\ldots,g^{n,0}_N)^T\in Y_N, \\ 
&\mathbf{h}^{n,0}=(h^{n,0}_0,h^{n,0}_1,\ldots,h^{n,0}_N)^T\in Y_N, \quad 
\mathbf{f}^{n,1}=(f^{n,1}_0,f^{n,1}_1,\ldots,f^{n,1}_N)^T\in Y_N,
\end{align}
\end{subequations}
where
\begin{subequations}\label{MTIIni.2}
\begin{align}
& v^{n,0}_j = \displaystyle \frac{1}{2} \left(u^n_j - i\varepsilon^2 \dot{u}^n_j  \right), \qquad j = 0,1,...,N,\qquad n\ge0,\label{MTIIni1} \\
&g^{n,0}_j = g(v^{n,0}_j), \quad h^{n,0}_j = h(v^{n,0}_j), \quad f^{n,1}_j = f(v^{n,1}_j,r^{n,1}_j,\tau). \label{MTIIni2}
\end{align}
\end{subequations}
The initial data is given as
\begin{equation}\label{init00}
u^0_j=\phi_1(x_j), \qquad \dot{u}^0_j=\frac{1}{\varepsilon^2}\phi_2(x_j), \qquad j=0,1,\ldots, N.
\end{equation}

In practical computation, the above MTI-FP method for \eqref{NKGE1d} is implemented by the fast Fourier transform (FFT). Thus, this method is fully explicit, accurate and very efficient, and its memory cost is $O(N)$ and the computational cost per time step is $O(N  \text{log} N)$.

\begin{remark}
Compared to the MTI-FP method with well-prepared initial data \citep{bao2014uniformly}, the choice of homogeneous transmission conditions $\gamma(x) \equiv 0$ greatly simplifies the scheme and the corresponding error analysis. For practical computation, it improves computational efficiency and reduces runtime.
\end{remark}


\section{A uniformly accurate error bound} \label{sec4}
In this section, we establish two independent optimal error bounds for the MTI-FP \eqref{MTIu}-\eqref{init00} by two different techniques, which immediately imply a uniformly accurate error bound 
with respect to $\varepsilon\in(0,1]$.

\subsection{The main result}

Let $0 < T < T^*$ with $T^*$ the maximum existence time of the solution $u(x,t)$ to the problem \eqref{NKGE1d}. We make the following assumption (A) of the solution $u(x,t)$, i.e. there exists an integer $m_0 \ge 4$ such that
\begin{align}
 \text{(A)} \quad \ u \in C^1 \left([0,T]; H^{m_0+1}_p (\Omega) \right), \quad \left\| u \right\|_{L^{\infty} \left( [0,T]; H^{m_0 +1} \right)} + \varepsilon^2 \left\| \partial_t u \right\|_{L^{\infty} \left( [0,T]; H^{m_0 +1} \right)} \lesssim 1, \nonumber
\end{align}
where $H_p^m(\Omega) = \left\{ \phi(x) \in H^m(\Omega) \ | \ \phi^{(k)}(a) = \phi^{(k)} (b), \ k = 0,1,...,m-1  \right\}$.

We introduce the following notation
\begin{align}
C_0 = \max_{0<\varepsilon \le 1} \left\{ \left\| u \right\|_{L^{\infty} \left( [0,T]; H^{m_0 +1} \right)} ,  \varepsilon^2 \left\| \partial_t u \right\|_{L^{\infty} \left( [0,T]; H^{m_0 +1} \right)}\right\}, \label{4.2}
\end{align}
and define the error functions as
\begin{align}
e^n(x) := u(x,t_n) - ( I_N \mathbf{u}^n ) (x), \quad \dot{e}^n(x) := \partial_t u(x,t_n) -( I_N \dot{\mathbf{u}}^n ) (x), \quad x \in \overline{\Omega},  \label{errfun}
\end{align}
where $\mathbf{u}^n$ and  $\dot{\mathbf{u}}^n$
are the numerical solutions obtained from the MTI-FP method \eqref{MTIu}-\eqref{init00}.
Then we have the following error bounds for the MTI-FP \eqref{MTIu}-\eqref{init00}.

\begin{theorem} Under the assumption (A), there exist two constants $0 < h_0 < 1$ and $0 < \tau_0 < 1$ sufficiently small and independent of $\varepsilon$ such that for any $0 < \varepsilon \le 1$, when $0 < h \le h_0$ and $0 < \tau \le \tau_0$, we have
\begin{align}
    & \left\| e^n \right\|_{H^1} + \varepsilon^2 \left\| \dot{e}^n \right\|_{H^1} \lesssim h^{m_0 } + \displaystyle \frac{\tau^2}{\varepsilon^2}, \quad  \left\| e^n \right\|_{H^1} + \varepsilon^2 \left\| \dot{e}^n \right\|_{H^1} \lesssim h^{m_0 } + \tau + \varepsilon^2, \label{thm1err1} \\
    & \left\| I_N \mathbf{u}^n \right\|_{H^1} \le C_0 + 1, \quad \left\| I_N \dot{\mathbf{u}}^n \right\|_{H^1} \le \displaystyle \frac{1}{ \varepsilon^2}(C_0 + 1) ,\quad 0 \le n \le \displaystyle \frac{T}{\tau}. \label{thm1err2}
\end{align}
Thus, by taking the minimum of the two error estimates in \eqref{thm1err1} and then taking the maximum for $\varepsilon \in (0,1]$, we obtain an error estimate uniformly for $\varepsilon \in (0,1]$,
\begin{align}
    \left\| e^n \right\|_{H^1} + \varepsilon^2 \left\| \dot{e}^n \right\|_{H^1} & \lesssim h^{m_0 } + \tau + \max_{0 < \varepsilon \le 1}\min \left\{ \displaystyle \frac{\tau^2}{\varepsilon^2}, \varepsilon^2 \right\} \nonumber \\
    & \lesssim h^{m_0} + \tau, \qquad 0 \le n \le \displaystyle \frac{T}{\tau}.\label{thm1err3}
\end{align}
\label{tm4.1}
\end{theorem}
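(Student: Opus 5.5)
We argue by mathematical induction on $n$, proving \eqref{thm1err1} and \eqref{thm1err2} together. The a priori bounds \eqref{thm1err2} are what let us treat the cubic nonlinearities as locally Lipschitz with $\varepsilon$-independent constants. The base case $n=0$ follows from $e^0=\phi_1-I_N\phi_1$ and $\varepsilon^2\dot e^0=\phi_2-I_N\phi_2$ together with standard interpolation estimates, which give $O(h^{m_0})$ in $H^1$.

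For the inductive step, fix $n$ and compare three objects on $[t_n,t_{n+1}]$:
\begin{itemize}
\item the exact decomposition $(v^n,r^n)$ of \eqref{NLSWv1d}--\eqref{NKGEr1d}, started from the exact data $\phi_1^n=u(t_n)$, $\phi_2^n=\varepsilon^2\partial_tu(t_n)$;
\item the numerical step \eqref{MTIvr2}, started from the numerical data $v^{n,0}$;
\item an intermediate "projected exact" step.
\end{itemize}
We split $e^{n+1}$ into two parts. The first is the \emph{local truncation error} $\xi^n$, obtained by plugging the exact $P_Nv^n,P_Nr^n$ into the scheme. The second is the \emph{propagated error}, coming from the differences in initial data. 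Using \eqref{MTIu}, $\|e^{n+1}\|_{H^1}+\varepsilon^2\|\dot e^{n+1}\|_{H^1}$ is controlled by the corresponding errors in $v$, $\varepsilon^2\partial_s v$, $r$ and $\varepsilon^2\partial_s r$ at $s=\tau$. The factor $\frac{i}{\varepsilon^2}v$ in $\dot u$ is absorbed by the $\varepsilon^2$ weight.

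Next we verify that the propagators are uniformly bounded. From \eqref{Coeffab}--\eqref{Coeffcp} we have $|a_l|\le1$, $\varepsilon^2|a_l'|\lesssim 1$, $|c_l(\tau)|\lesssim\tau$, $\varepsilon^2|c_l'|\lesssim\tau$, $|p_l|\lesssim \tau$ and $\varepsilon^2|p_l'|\lesssim\tau$. These yield stability of the form $\|\eta^{n+1}\|\le(1+C\tau)\|\eta^n\|+\|\xi^n\|$ in the energy norm $\|\cdot\|_{H^1}+\varepsilon^2\|\cdot\|_{H^1}$. A discrete Gronwall argument then gives $\|e^n\|\lesssim h^{m_0}+\tau^{-1}\max_k\|\xi^k\|$. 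The a priori bound \eqref{thm1err2} at step $n+1$ follows from the error bound by the inverse inequality, once $h\le h_0$ and $\tau\le\tau_0$. Here we use that the error bound is at most $h^{m_0}+\tau$ after taking the minimum, so it is small uniformly in $\varepsilon$.

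The core of the proof is to establish two local error bounds: $\|\xi^n\|\lesssim \tau h^{m_0}+\tau^3/\varepsilon^2$, and $\|\xi^n\|\lesssim \tau h^{m_0}+\tau^2+\tau\varepsilon^2$.

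\emph{First bound.} Under (A) and with $\partial_sv^n(0)=0$, energy estimates for the NLSW and KG equations give $\|\partial_s v^n\|_{H^{m_0-1}}\lesssim \varepsilon^{-2}$ and $\|\partial_{ss}v^n\|\lesssim\varepsilon^{-4}$. Only low derivatives are needed, and this is why the regularity requirement is weaker than in earlier MTIs. The Gautschi quadrature error is $\int_0^\tau |b_l|\,\theta^2\|\partial_{ss}g\|\,d\theta$. Bounding $|b_l|\lesssim \tau$-integrable pieces and $\|\partial_{ss}g\|$ via the equation gives $O(\tau^3/\varepsilon^2)$ after exploiting $\partial_s g(0)=0$. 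A similar argument with a trapezoidal error $\tau^3\|\partial_{ss}f\|/\varepsilon^2$ handles the $f$-term.

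\emph{Second bound.} We use that on each interval $r^n=O(\varepsilon^2)$ and $\partial_sr^n=O(1)$ in the weighted sense. We also use that $\partial_s v^n$ has an initial layer: it equals $-\tfrac{i}{2}(1-e^{-2is/\varepsilon^2})(\ldots)$ plus $O(\varepsilon^2)$, so $\|\partial_sv^n\|\lesssim1$ uniformly. With these facts the quadrature errors are $O(\tau^2)$, and neglected terms such as $f^n$, which is $O(\varepsilon^2)$, contribute $O(\tau\varepsilon^2)$.

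We expect the main obstacle to be the second local bound. Its difficulty is proving uniform (in $\varepsilon$) bounds on $\partial_sv^n$ and $r^n/\varepsilon^2$ over one step with the non-well-prepared data $\gamma\equiv0$. The approach we would try first is to write $v^n$ via Duhamel \eqref{Sol1.v}, isolate the fast mode $e^{is\lambda_l^+}$ with $\lambda_l^+\approx -2/\varepsilon^2$ (which carries the layer), and integrate by parts in the oscillatory integrals \eqref{Sol1.r} against $e^{3i\theta/\varepsilon^2}$ and $\sin(\omega_l(\cdot))$. The non-resonance $3\neq\pm1$ gains a factor $\varepsilon^2$ and gives $\|r^n\|_{H^1}\lesssim \varepsilon^2$.

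Combining the two local bounds with the stability estimate yields the two estimates in \eqref{thm1err1}. Taking their minimum and then the maximum over $\varepsilon$, the worst case is $\varepsilon^2\sim\tau$, and this gives \eqref{thm1err3}.
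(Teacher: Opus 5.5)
Your architecture matches the paper's: induction carrying the a priori bound \eqref{thm1err2}, a split into a local truncation error and a Lipschitz nonlinear error, discrete Gronwall, and two Taylor truncations giving local errors of size $\tau^3/\varepsilon^2$ and $\tau^2+\tau\varepsilon^2$. Two steps, however, fail as written.

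\emph{Stability norm.} The bound $\varepsilon^2|a_l'|\lesssim 1$ is false. From \eqref{Coeffab}--\eqref{Coeffdab},
\[
\varepsilon^2|a_l'(\tau)|=\frac{\varepsilon^2\mu_l^2\,|\sin(\omega_l\tau)|}{\sqrt{1+\varepsilon^2\mu_l^2}},
\]
which is of size $\varepsilon|\mu_l|$, up to $\varepsilon/h$, on the high modes. Equivalently, the linear part of the error recursion is exactly the Klein--Gordon propagator (Lemma~\ref{lemma4.3}). Its entry mapping $e^n$ into $\varepsilon^2\dot e^{n+1}$ is $-\varepsilon^2\omega_l\sin(\omega_l\tau)$, with $\varepsilon^2\omega_l=\sqrt{1+\varepsilon^2\mu_l^2}$. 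So no recursion $\|(e^{n+1},\dot e^{n+1})\|\le(1+C\tau)\|(e^n,\dot e^n)\|+\|\xi^n\|$ holds in the norm $\|e\|_{H^1}+\varepsilon^2\|\dot e\|_{H^1}$ uniformly in $h$ and $\varepsilon$. The missing ingredient is the energy \eqref{enefun}, $\mathcal{E}=\sum_l(1+\mu_l^2)\varepsilon^2(|\widehat{\dot e}_l|^2+\omega_l^2|\widehat{e}_l|^2)$, which the linear flow preserves exactly. It gives $\mathcal{E}^{n+1}\le(1+\tau)\mathcal{E}^n+\frac{1+\tau}{\tau}[\mathcal{E}(\xi^n,\dot\xi^n)+\mathcal{E}(\eta^n,\dot\eta^n)]$. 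The theorem's norm is then recovered from $\|e\|_{H^1}+\varepsilon^2\|\dot e\|_{H^1}\lesssim\varepsilon\sqrt{\mathcal{E}}$. The price is that the local error and the nonlinear error $\eta^n$ must be bounded in $\mathcal{E}$, including the extra term $\varepsilon\|\partial_x\xi^n\|_{H^1}$ (one more derivative). The paper handles this with $|\mu_l|/(\varepsilon^2\omega_l)\le 1/\varepsilon$ and $H^2$, $H^3$ bounds on $\partial_s f^n$ and $f^n$. Your plan has no counterpart to this step.

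\emph{Prior estimates.} Your first local bound does not follow from the derivative bounds you state. With $\|\partial_{ss}v^n\|\lesssim\varepsilon^{-4}$, the Gautschi remainder $\int_0^\tau b_l(\tau-\theta)\,\theta^2\int_0^1\widehat{(g^n)}_l''(\theta\rho)(1-\rho)\,d\rho\,d\theta$ is only $O(\tau^3\varepsilon^{-4})$, which gives a global error of $\tau^2/\varepsilon^4$. The condition $\partial_s g^n(0)=0$ only removes the linear Taylor term; it does not make $\partial_{ss}g^n$ smaller. Likewise, the trapezoidal remainder for $f^n$ contains $\omega_l^2 f^n$, $\omega_l\partial_s f^n$ and $\partial_{ss}f^n$, because the kernel $\sin(\omega_l(\tau-\theta))$ is differentiated; it is not $\tau^3\|\partial_{ss}f\|/\varepsilon^2$.

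Both local bounds therefore need the sharp uniform estimates of Lemma~\ref{lemma4.1}: $\|\partial_s v^n\|_{H^{m_0-1}}\lesssim1$, $\varepsilon^2\|\partial_{ss}v^n\|_{H^{m_0-1}}\lesssim1$, $\|r^n\|_{H^{m_0-1}}\lesssim\varepsilon^2$, $\|\partial_s r^n\|_{H^{m_0-1}}\lesssim1$ and $\varepsilon^2\|\partial_{ss}r^n\|_{H^{m_0-3}}\lesssim1$. From these one gets $\|\partial_{ss}g^n\|_{H^1},\|\partial_{ss}f^n\|_{H^1}\lesssim\varepsilon^{-2}$, $\|\partial_s f^n\|_{H^2}\lesssim1$ and $\|f^n\|_{H^3}\lesssim\varepsilon^2$. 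You postpone exactly these as the ``main obstacle''.

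Your suggested route for that obstacle is the paper's. Differentiate the NLSW and use $\partial_s v^n(0)=0$ to get $\partial_s v^n(s)=\varepsilon^2 b(s)\partial_{ss}v^n(0)-\int_0^s b(s-\theta)\partial_\theta g^n\,d\theta$. Here $\varepsilon^2\partial_{ss}v^n(0)=\partial_{xx}v^n(0)-g(v^n(0))=O(1)$, so the prefactor $\varepsilon^2$ absorbs the $O(\varepsilon^{-2})$ initial layer. Then integrate by parts against $e^{\pm 3i\theta/\varepsilon^2}$ to bound $r^n$. Until this is carried out, neither bound in \eqref{thm1err1} is established.
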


In order to prove Theorem \ref{tm4.1}, we introduce the error energy functional \citep{bao2014uniformly}
\begin{align}
    \mathcal{E}( e^n,\dot{e}^n) := \varepsilon^2 \left\| \dot{e}^n \right\|_{H^1}^2 + \left\|\partial_x e^n \right\|_{H^1}^2 + \displaystyle \frac{1}{\varepsilon^2} \left\| e^n \right\|_{H^1}^2, \quad 0 \le n \le \frac{T}{\tau}. \label{enefun}
\end{align}
The proof will be split into four main steps to be presented in the following subsections as:
(i) estimates for the micro (or local) variables $v^n$ and $r^n$, (ii)
 error functions and the corresponding error equations, (iii) estimates for local truncation errors and nonlinear terms errors, and 
(iv) proof for the main result by the energy method.

\subsection{Estimates for the micro (or local) variables }

We first show prior estimates for $v^n$ and $r^n$ in the multiscale decomposition \eqref{NLSWv1d}-\eqref{NKGEr1d} at each time step. 

\begin{lemma}\label{lemma4.1}
(Prior estimates) Under the assumption (A), there exists a constant $\tau_1 > 0$ independent of $0 < \varepsilon \le 1$ and $h > 0$, such that for $0 < \tau \le \tau_1$
\begin{align}
    &\left\| v^n \right\|_{L^{\infty}([0,\tau]; H^{m_0 + 1})} + \left\| \partial_s v^n \right\|_{L^{\infty}([0,\tau]; H^{m_0 - 1})} + \varepsilon^2 \left\| \partial_{ss} v^n \right\|_{L^{\infty}([0,\tau]; H^{m_0-1})} \lesssim 1, \label{lm1v} \\ 
    & \left\| r^n \right\|_{L^{\infty}([0,\tau]; H^{m_0 - 1})} + \varepsilon^2 \left\| \partial_s r^n \right\|_{L^{\infty}([0,\tau]; H^{m_0 - 1})} + \varepsilon^4 \left\| \partial_{ss} r^n \right\|_{L^{\infty}([0,\tau]; H^{m_0 -3})} \lesssim \varepsilon^2. \label{lm1r}
\end{align}
\end{lemma}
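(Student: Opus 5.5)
We prove the prior estimates for $v^n$ and $r^n$ on a single interval $[0,\tau]$ with $\tau$ small but independent of $\varepsilon$. The initial data come from assumption (A) at $t=t_n$: $\phi_1^n=u(\cdot,t_n)$ and $\phi_2^n=\varepsilon^2\partial_t u(\cdot,t_n)$. Hence $v^n(\cdot,0)=\frac12(\phi_1^n-i\phi_2^n)$ satisfies $\|v^n(\cdot,0)\|_{H^{m_0+1}}\le C_0$, while $\partial_s v^n(\cdot,0)=0$, $r^n(\cdot,0)=0$ and $\partial_s r^n(\cdot,0)=0$. We use the Duhamel representations \eqref{Sol1.v}--\eqref{DSol1.r}, which hold for the exact (untruncated) Fourier series, together with the algebra property of $H^m$ for $m\ge1$ in 1D. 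The argument is a bootstrap on $[0,\tau]$.

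\textbf{Step 1 (bounds for the NLSW multipliers).} Using $\lambda_l^+\lambda_l^-=-\mu_l^2/\varepsilon^2$ and $\lambda_l^--\lambda_l^+=\frac{2}{\varepsilon^2}\sqrt{1+\varepsilon^2\mu_l^2}$, we check the following uniformly in $\varepsilon$ and $l$:
\[
|a_l(s)|\le 1,\qquad |a_l'(s)|\lesssim \mu_l^2,\qquad |b_l(s)|\le s,\qquad |b_l'(s)|\le 1.
\]
For $a_l$, this follows from writing $a_l=e^{is\lambda_l^-}\bigl(\lambda_l^+-\lambda_l^-e^{is(\lambda_l^+-\lambda_l^-)}\bigr)/(\lambda_l^+-\lambda_l^-)$ and using that $\lambda_l^+\le 0$ and $\lambda_l^-<0$ have the same sign. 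Also $|a_l(s)|^2\le 1$ follows from a direct computation with $\cos$. For $\varepsilon^2 a_l''$, the equation gives $\varepsilon^2 a_l''=-2ia_l'-\mu_l^2a_l=O(\mu_l^2)$.

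Plugging these into \eqref{Sol1.v} gives
\[
\|v^n(s)\|_{H^{m_0+1}}\le C_0+\int_0^s\|g^n\|_{H^{m_0+1}}\,d\theta\le C_0+C s\sup\|v^n\|_{H^{m_0+1}}^3 .
\]
A continuity argument then yields $\|v^n\|_{L^\infty H^{m_0+1}}\le 2C_0$ for $\tau\le\tau_1$. From \eqref{DSol1.v}, the loss of two derivatives in $a_l'$ gives $\|\partial_s v^n\|_{H^{m_0-1}}\lesssim 1$. Finally, the equation $\varepsilon^2\partial_{ss}v^n=-2i\partial_sv^n+\partial_{xx}v^n-g^n$ gives $\varepsilon^2\|\partial_{ss}v^n\|_{H^{m_0-1}}\lesssim1$. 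This proves \eqref{lm1v}.

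\textbf{Step 2 (the remainder $r^n$; main obstacle).} A naive bound of \eqref{Sol1.r} only gives $O(\tau/\varepsilon^2)$, because of the kernel $\sin(\omega_l(\theta-s))/(\varepsilon^2\omega_l)$ with $\varepsilon^2\omega_l\ge1$ and the factor $1/\varepsilon^2$ in $\partial_s r$. The key is that the source $e^{3i\theta/\varepsilon^2}h^n$ is non-resonant: its frequency is $3/\varepsilon^2$, while $\omega_l\approx 1/\varepsilon^2$.

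We write $\sin(\omega_l(\theta-s))$ as a combination of $e^{\pm i\omega_l(\theta-s)}$ and integrate by parts in $\theta$ using
\[
e^{i(3/\varepsilon^2\pm\omega_l)\theta}=\frac{1}{i(3/\varepsilon^2\pm\omega_l)}\,\frac{d}{d\theta}\,e^{i(3/\varepsilon^2\pm\omega_l)\theta}.
\]
The phases satisfy $|3/\varepsilon^2\pm\omega_l|\ge c/\varepsilon^2$ whenever $\varepsilon^2\mu_l^2\lesssim 1$. When $\varepsilon\mu_l\gtrsim1$, we instead use $1/(\varepsilon^2\omega_l)\le 1/(\varepsilon|\mu_l|)\lesssim 1$ together with the loss of derivatives, which trades a factor $\varepsilon^2\mu_l^2$. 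This is where two derivatives are lost, from $m_0+1$ to $m_0-1$. The boundary terms and the term with $\partial_\theta h^n=O(1)$ in $H^{m_0-1}$ (by Step 1) then give an $O(\varepsilon^2)$ contribution to $r^n$ and an $O(1)$ contribution to $\varepsilon^2\partial_s r^n$.

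\textbf{Step 3 (closing the bootstrap).} The term $f^n$ is at least linear in $r^n$, with $\|f^n\|_{H^{m_0-1}}\lesssim\|r^n\|_{H^{m_0-1}}$ given Step 1. Its Duhamel contribution is bounded by $\int_0^s\|r^n\|_{H^{m_0-1}}\,d\theta$ in $r^n$ and by $\varepsilon^{-2}\int_0^s\|r^n\|_{H^{m_0-1}}\,d\theta$ in $\partial_s r^n$. Gronwall's inequality for $r^n$, under the bootstrap $\|r^n\|_{H^{m_0-1}}\le 1$, yields $\|r^n\|_{L^\infty H^{m_0-1}}\lesssim\varepsilon^2$ and $\varepsilon^2\|\partial_s r^n\|_{H^{m_0-1}}\lesssim\varepsilon^2$.

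For the second derivative we use the equation
\[
\varepsilon^4\partial_{ss}r^n=\varepsilon^2\partial_{xx}r^n-r^n-\varepsilon^2\bigl(f^n+[\lambda e^{3is/\varepsilon^2}(v^n)^3+{\rm c.c.}]\bigr),
\]
and measure it in $H^{m_0-3}$, losing two more derivatives. This gives $\varepsilon^4\|\partial_{ss}r^n\|_{H^{m_0-3}}\lesssim\varepsilon^2$, completing \eqref{lm1r}. The delicate point throughout is the uniform non-resonance bound in Step 2; everything else is a standard continuity argument.
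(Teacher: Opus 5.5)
Your argument has a genuine gap in Step 1, at the uniform bound for $\partial_s v^n$.

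\textbf{The gap.} Your multiplier bounds for $b_l$ are off by a factor $\varepsilon^{-2}$. By \eqref{Coeffdab}, $b_l'(0)=1/\varepsilon^2$ for every $l$. Moreover $\lambda_l^+<0\le\lambda_l^-$: the two roots have \emph{opposite} signs, and that is also what actually gives $|a_l|\le 1$. Hence one only has
\[
|b_l(s)|\le\min\{1,s/\varepsilon^2\},\qquad |b_l'(s)|\le 1/\varepsilon^2 .
\]
The bound $|b_l|\le1$ is enough for the $H^{m_0+1}$ estimate of $v^n$. But estimating the Duhamel integral in \eqref{DSol1.v} directly then gives only $\|\partial_s v^n\|_{H^{m_0-1}}\lesssim 1+\tau/\varepsilon^2$, which is not uniform in $\varepsilon$. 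Two later steps rely on the uniform bound: your estimate of $\varepsilon^2\partial_{ss}v^n$ via the equation, and the input $\|\partial_\theta h^n\|_{H^{m_0-1}}\lesssim 1$ in the integration by parts of Step 2. So the argument does not close as written.

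\textbf{The repair.} Move the $s$-derivative off the kernel and onto $g^n$; this is where $\partial_s v^n(\cdot,0)=0$ enters. Integrate by parts in $\theta$ in \eqref{DSol1.v}, using $b_l(0)=0$ and $a_l'=-\mu_l^2 b_l$. Equivalently, as the paper does, differentiate the NLSW in $s$ and apply Duhamel to $\partial_s v^n$. This gives
\[
\partial_s v^n(s)=\varepsilon^2 b(s)\,\partial_{ss}v^n(\cdot,0)-\int_0^s b(s-\theta)\,\partial_\theta g(v^n(\cdot,\theta))\,d\theta,\qquad \varepsilon^2\partial_{ss}v^n(\cdot,0)=\partial_{xx}v^n(\cdot,0)-g(v^n(\cdot,0)).
\]
Now only $|b_l|\le 1$ is used. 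The first term is $O(1)$ in $H^{m_0-1}$ by the $H^{m_0+1}$ bound on $v^n(\cdot,0)$. The second closes by Gronwall, since $\|\partial_\theta g^n\|_{H^{m_0-1}}\lesssim\|v^n\|_{H^{m_0-1}}^2\|\partial_\theta v^n\|_{H^{m_0-1}}$. With this fix the rest of your proof goes through.

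\textbf{Step 2 versus the paper.} Your Step 2 is a valid variant of the paper's argument.
\begin{itemize}
\item You integrate by parts against the full phases $3/\varepsilon^2\pm\omega_l$ and split frequencies at $\varepsilon^2\mu_l^2\sim1$ to avoid the resonance $\omega_l=3/\varepsilon^2$. At high frequencies, $1\lesssim\varepsilon^2\mu_l^2$ supplies the factor $\varepsilon^2$.
\item The paper instead factors $e^{\mp i\omega_l\theta}e^{3i\theta/\varepsilon^2}=e^{\mp i\beta_l\theta}e^{(3\mp1)i\theta/\varepsilon^2}$ with $\beta_l\le\mu_l^2$. It then integrates by parts only against $e^{2i\theta/\varepsilon^2}$ and $e^{4i\theta/\varepsilon^2}$, so no frequency split is needed.
\end{itemize}

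\textbf{A slip in Step 2.} The $h^n$-contribution to $\varepsilon^2\partial_s r^n$ is $O(\varepsilon^2)$, not $O(1)$. An $O(1)$ contribution would contradict the bound you state in Step 3 and the lemma itself.
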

\begin{proof}
Recalling the initial data \eqref{NLSWv1d(b)} and the assumption (A), we have
\begin{align*}
\left\| v^n(\cdot,0) \right\|_{H^{m_0 + 1}} \lesssim \left\| u(\cdot,t_n) \right\|_{H^{m_0 + 1}}+\varepsilon^2 \left\| \partial_t u(\cdot,t_n) \right\|_{H^{m_0 + 1}} \lesssim 1.
\end{align*}
Applying Duhamel's principle to the NLSW \eqref{NLSWv1d}, we get
\begin{align*}
    v^n(x,s) = a(s) v^n(x,0)  - \int_0^s b(s-\theta) g(v^n(x,\theta)) d\theta, \quad0 \le s \le \tau,
\end{align*}
where $a(s)$ and $b(s)$ are the pseudo-differential operators
\begin{align*}
    a(s) = \displaystyle \frac{\lambda^+ e^{is \lambda^-} -\lambda^- e^{is \lambda^+}}{\lambda^+ - \lambda^-}, \quad b(s) = i \frac{e^{is \lambda^+} - e^{is \lambda^-}}{\varepsilon^2(\lambda^- - \lambda^+)}, \quad \lambda^\pm = - \frac{1 \pm\sqrt{1 - \varepsilon^2 \Delta}}{\varepsilon^2},
\end{align*}
and for any $\phi \in H^k (\Omega)$
\begin{align*}
    \left\| a(s) \phi \right\|_{H^k} \lesssim \left\| \phi \right\|_{H^k},\quad \left\| b(s) \phi \right\|_{H^k} \lesssim \left\| \phi \right\|_{H^k}.
\end{align*}
Then we have
\begin{align*}
    \left\| v^n(\cdot,s) \right\|_{H^{m_0 + 1}} \lesssim \left\| v^n(\cdot,0) \right\|_{H^{m_0 + 1}}  + \int_0^s \left\| g(v^n(\cdot,\theta
    )) \right\|_{H^{m_0 + 1}} d \theta, \quad 0 \le s \le \tau.
\end{align*}
Thus by a standard bootstrap argument for the nonlinear wave equation \citep{tao2006local}, we obtain that there exists a positive constant $\tau^*_1$ independent of $\varepsilon$ and $h$ such that for any $0 < \tau \le \tau_1^*$,
\begin{align*}
    \left\| v^n \right\|_{L^{\infty}([0,\tau]; H^{m_0 + 1})} \lesssim 1.
\end{align*}
Similarly, by differentiating \eqref{NLSWv1d}, we can get the formula for $\partial_s v^n$, i.e.,
\begin{align*}
    \partial_sv^n(x,s) = \varepsilon^2 b(s) \partial_{ss} v^n(x,0) - \int_0^s b(s-\theta) \partial_{\theta}g(v^n(x,\theta)) d\theta,\quad 0 \le s \le \tau,
\end{align*}
where
\begin{align*}
    \partial_{ss} v^n(x,0) = \frac{1}{\varepsilon^2} \left[ \partial_{xx} v^n(x,0) - g(v^n(x,0)) \right] \in H^{m_0 -1}.
\end{align*}
Thus with 
\begin{align*}
    \varepsilon^2 \partial_{ss} v^n(x,s) =- 2i \partial_s v^n(x,s) + \partial_{xx} v^n(x,s) - 3\lambda |v^n(x,s)|^2v^n(x,s), \quad 0 \le s \le \tau,
\end{align*}
we can easily establish estimates \eqref{lm1v} in a similar manner with details omitted here for brevity.

To obtain the estimate \eqref{lm1r}, we perform the analysis in Fourier space. Assuming 
\begin{align*}
    r^n(x,s) = \sum_{l \in \mathbb{Z}} \widehat{(r^n)}_l (s) e^{i \mu_l (x-a)}, \quad x \in \overline{\Omega}, \quad 0 \le s \le \tau,
    \end{align*}
and taking the Fourier transform on both sides of \eqref{NKGEr1d}, we have
\begin{align}
\widehat{(r^n)}_l (s)  = \int_0^s \frac{\mathrm{sin}(\omega_l ( \theta-s ))}{\varepsilon^2 \omega_l} 
\left( e^{\frac{3i \theta}{\varepsilon^2}} \widehat{(h^n)}_l (\theta) + 
e^{\frac{-3i \theta}{\varepsilon^2}} \widehat{( \overline{h^n})}_l (\theta) 
+ \widehat{(f^n)}_l (\theta) \right) d \theta,  \label{lm1.1}
\end{align}
where $h^n$ and $f^n$ are given in \eqref{gnhnfn}. 

Let $\beta_l = \omega_l - \frac{1}{\varepsilon^2} = \mu_l^2/\sqrt{1 + \varepsilon^2 \mu_l^2}$, then we get
\begin{align*}
    &\int_0^s \frac{\mathrm{sin}(\omega_l ( \theta-s))}{\varepsilon^2 \omega_l} e^{3i \theta / \varepsilon^2} \widehat{(h^n)}_l (\theta) d\theta \nonumber \\
    &=i\int_0^s \frac{e^{i\omega_l(s - \theta)}-e^{-i\omega_l(s - \theta)}}{2\varepsilon^2 \omega_l} e^{3i \theta / \varepsilon^2} \widehat{(h^n)}_l (\theta) d\theta \nonumber \\
    &=\displaystyle \frac{i}{2\varepsilon^2 \omega_l} \left[e^{i \omega_l s}  \int_0^s e^{-i \beta_l \theta} e^{2i \theta / \varepsilon^2} \widehat{(h^n)}_l (\theta) d\theta-e^{-i \omega_l s} \int_0^s e^{i \beta_l \theta} e^{4i \theta / \varepsilon^2} \widehat{(h^n)}_l (\theta) d\theta\right] \nonumber \\
   & =\displaystyle \frac{i}{2\varepsilon^2 \omega_l} \left[ \frac{\varepsilon^2}{2i} e^{i \omega_l s}  \int_0^s  e^{-i \beta_l \theta} \widehat{(h^n)}_l (\theta)  d(e^{2i \theta / \varepsilon^2}) - \frac{\varepsilon^2}{4i} e^{-i \omega_l s} \int_0^s e^{i \beta_l \theta} \widehat{(h^n)}_l (\theta) d(e^{4i \theta / \varepsilon^2}) \right] \nonumber \\
   &=\displaystyle \frac{ e^{i \omega_l s}}{4 \omega_l} \left[ e^{is(\frac{2}{\varepsilon^2}- \beta_l)} \widehat{(h^n)}_l (s)-\widehat{(h^n)}_l(0)- \int_0^s e^{i\theta(\frac{2}{\varepsilon^2}-\beta_l)} \left( \widehat{(\partial_{\theta} h^n)}_l -i\beta_l \widehat{(h^n)}_l \right)  d \theta \right]
   \nonumber \\
    &\ \ +\frac{e^{-i \omega_l s}}{8 \omega_l} \left[ \widehat{(h^n)}_l(0)- e^{is(\beta_l+\frac{4}{\varepsilon^2})} \widehat{(h^n)}_l (s) +\int_0^s e^{i\theta(\beta_l+ \frac{4}{\varepsilon^2})} \left( \widehat{(\partial_{\theta} h^n)}_l + i\beta_l \widehat{(h^n)}_l \right) 
    d \theta \right]. 
\end{align*}
Thus we have
\begin{align*}
& \left| \int_0^s \frac{\mathrm{sin}(\omega_l (\theta- s ))}{\varepsilon^2 \omega_l} e^{3i \theta / \varepsilon^2} \widehat{(h^n)}_l (\theta) d\theta \right| \nonumber \\
& \lesssim  \varepsilon^2 \left[ \left | \widehat{(h^n)}_l(s) \right| + \left| \widehat{(h^n)}_l(0) \right| + \int_0^s \left(\left| \widehat{(\partial_{\theta} h^n )}_l (\theta) \right| + \mu_l^2 \left| \widehat{(h^n)}_l (\theta) \right|\right) d\theta \right].
\end{align*}
Similarly, by integration by parts, we obtain the following estimates for \eqref{lm1.1}
\begin{align}
\left| \widehat{(r^n)}_l(s) \right|\lesssim & \varepsilon^2 \left[ \left | \widehat{(h^n)}_l(s) \right| + \left| \widehat{(h^n)}_l(0) \right| + \int_0^s \left(\left| \widehat{(\partial_{\theta} h^n )}_l (\theta) \right| + \mu_l^2 \left| \widehat{(h^n)}_l (\theta) \right| \right)d\theta \right] \nonumber \\
& + \varepsilon^2 \left[ \left| \widehat{(\overline{h^n})}_l(s) \right| + \left| \widehat{(\overline{h^n})}_l(0) \right| + \int_0^s \left| \widehat{(\overline{\partial_{\theta} h^n })}_l (\theta) \right| + \mu_l^2 \left| \widehat{(\overline{h^n})}_l (\theta) \right| d\theta \right] \nonumber \\
& + \displaystyle \int_0^s \left| \widehat{(f^n)}_l (\theta) \right| d \theta, \quad 0\le s\le \tau,
    \quad l \in \mathbb{Z}.\label{lm1.2}
\end{align}
Multiplying the square of \eqref{lm1.2} by $1 + \mu_l^2 + \cdot \cdot \cdot + \mu_l^{2( m_0 -1)}$, and summing them up for $l \in \mathbb{Z}$, we get
\begin{align}
\left\| r^n (\cdot,s) \right\|_{H^{m_0 - 1}} ^2\lesssim &\varepsilon^4 \left[     \left\| h^n \right\|_{L^{\infty}([0,\tau];H^{m_0 -1})}^2+  s^2 \left\| \partial_s h^n \right\|_{L^{\infty}([0,\tau];H^{m_0 - 1})}^2 \right. \nonumber \\
& \left. + s^2 \left\| h^n \right\|_{L^{\infty}([0,\tau];H^{m_0 + 1})}^2  \right] + s\int_0^s \left\| f^n (\theta)\right\|_{H^{m_0 - 1}}^2 d \theta  \nonumber \\
\lesssim & \varepsilon^4+\int_0^s \left\| f^n (\theta) \right\|_{H^{m_0 - 1}}^2 d \theta, \qquad 0 \le s \le \tau.
\label{lm1.3}
\end{align}
Recalling the formula of $f^n$ in \eqref{gnhnfn} and the prior estimate \eqref{lm1v}, and adapting the bootstrap argument \citep{tao2006local}, we obtain that there exists a positive constant $0 < \tau_1 <\tau_1^* $ independent of $\varepsilon$ and $h$ such that for any $0 < \tau \le \tau_1$,
\begin{align*}
    \left\| r^n \right\|_{L^{\infty}([0,\tau];H^{m_0 - 1})} \lesssim \varepsilon^2.
\end{align*}

\noindent Similarly we can obtain the following estimates
\begin{align}
    \left\| \partial_s r^n \right\|_{L^{\infty}([0,\tau];H^{m_0 - 1})} \lesssim 1, \quad \left\| \partial_{ss} r^n \right\|_{L^{\infty}([0,\tau];H^{m_0 - 3})} \lesssim \displaystyle \frac{1}{\varepsilon^2},
    \label{4.15}
\end{align} 
which completes the proof of \eqref{lm1r}. 
\end{proof}

\subsection{Error functions and the corresponding error equations}

Define another set of error functions
\begin{equation}
e_N^n := (P_N u)(x,t_n) - ( I_N \mathbf{u}^n ) (x), \ 
\dot{e}_N^n := (P_N \partial_t u)(x,t_n) - ( I_N \mathbf{\dot{u}}^n ) (x), \ x \in \overline{\Omega}.
\end{equation} 
By the regularity of the solution $u(x,t)$ in Assumption (A), we have
\begin{subequations}\label{Tri_err}
\begin{align}
    &\left\| e^n \right\|_{H^1} \le \left\| u(\cdot,t_n) - (P_N u)(\cdot,t_n) \right\|_{H^1} + \left\| e^n_N \right\|_{H^1} \lesssim h^{m_0} + \left\| e_N^n \right\|_{H^1} , \\ 
   & \left\| \dot{e}^n \right\|_{H^1} \le \left\| \partial_t u(\cdot,t_n) - (P_N \partial_t u)(\cdot,t_n) \right\|_{H^1} + \left\| \dot{e}_N^n \right\|_{H^1} \lesssim \displaystyle \frac{h^{m_0}}{\varepsilon^2} + \left\| \dot{e}^n_N \right\|_{H^1}.
\end{align}
\end{subequations}
Thus, we only need to prove estimates \eqref{thm1err1} and \eqref{thm1err2} with $e^n$ and $\dot{e}^n$
replaced by $e^n_N$ and $\dot{e}_N^n$, respectively.

\begin{lemma}\label{lemma4.2}
(Formula of the exact solution) Denote the Fourier expansion of the exact solution $u(x,t)$ of the problem \eqref{NKGE1d} as 
\begin{align}
    u(x,t) = \sum_{l \in \mathbb{Z}} \widehat{u}_l(t) e^{i \mu_l (x-a)}, \quad x \in\overline{\Omega}, \quad t \ge 0, \label{lm2u}
\end{align}
then we have
\begin{align}  
\widehat{u}_l(t_{n+1})= &\mathrm{cos}(\omega_l \tau) \widehat{u}_l (t_n)  + \displaystyle \frac{\mathrm{sin}(\omega_l \tau)}{\omega_l} \widehat{u}_l'(t_n) - \int_0^{\tau} \frac{\mathrm{sin}(\omega_l (\tau - \theta))}{\varepsilon^2 \omega_l} \left[ e^{i \theta/\varepsilon^2} 
\widehat{(g^n)}_l (\theta)  \right. \nonumber \\
& \left.+  e^{-i\theta/\varepsilon^2}  \widehat{(\overline{g^n})}_l (\theta)  + e^{3i \theta/ \varepsilon^2} \widehat{(h^n)}_l (\theta) + e^{-3i\theta/\varepsilon^2}  \widehat{(\overline{h^n})}_l (\theta) + \widehat{(f^n)}_l (\theta) \right] d \theta, \label{lm2Fu} \\
\widehat{u}_l'(t_{n+1})=&-\omega_l \mathrm{sin}(\omega_l \tau) \widehat{u}_l (t_n)  + \mathrm{cos}(\omega_l \tau) \widehat{u}_l'(t_n) - \int_0^{\tau} \frac{\mathrm{cos}(\omega_l (\tau - \theta))}{\varepsilon^2 } \left[ 
    e^{i \theta/\varepsilon^2} \widehat{(g^n)}_l (\theta) \right. \nonumber \\
    & \left. +  e^{-i\theta/\varepsilon^2}  \widehat{(\overline{g^n})}_l (\theta)  + e^{3i \theta/\varepsilon^2} \widehat{(h^n)}_l (\theta) + e^{-3i\theta/\varepsilon^2}  \widehat{(\overline{h^n})}_l (\theta) + \widehat{(f^n)}_l (\theta) \right] d \theta. \label{lm2Fu'}
\end{align}
\end{lemma}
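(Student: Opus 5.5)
We take the Fourier transform of the NKGE \eqref{NKGE1d} on the interval $[t_n,t_{n+1}]$ and then apply the variation-of-constants formula for the resulting linear oscillator with a source term. Write $t=t_n+s$ with $0\le s\le\tau$. Projecting the equation onto $e^{i\mu_l(x-a)}$ and using orthogonality gives, for every $l\in\mathbb{Z}$,
\begin{equation*}
\varepsilon^2\widehat{u}_l''(t_n+s)+\varepsilon^2\omega_l^2\,\widehat{u}_l(t_n+s)+\lambda\widehat{(u^3)}_l(t_n+s)=0 .
\end{equation*}
This uses $\mu_l^2+\varepsilon^{-2}=\varepsilon^2\omega_l^2$, since $\omega_l=\varepsilon^{-2}\sqrt{1+\varepsilon^2\mu_l^2}$.

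We then use the multiscale decomposition \eqref{ansatz1D}. Its components $v^n$ and $r^n$ solve \eqref{NLSWv1d} and \eqref{NKGEr1d}, and by the identity \eqref{2.1} (equivalently, the consistency remark in Section~\ref{sec2}) this $u(x,t_n+s)$ is the exact solution on $[t_n,t_{n+1}]$. Uniqueness holds under assumption (A), and the initial data match. Indeed, $v^n(0)+\overline{v^n(0)}=\phi_1^n=u(t_n)$. For the time derivative, $\partial_s v^n(0)=\partial_s r^n(0)=r^n(0)=0$ gives $\partial_s u(t_n)=\frac{i}{\varepsilon^2}\bigl(v^n(0)-\overline{v^n(0)}\bigr)=\frac{1}{\varepsilon^2}\phi_2^n$.

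Next we expand the cubic term. Let $w=e^{is/\varepsilon^2}v^n+{\rm c.c.}$, so that $u^3=(w+r^n)^3=w^3+f(v^n,r^n,s)$ by the definition \eqref{2.2}. Expanding $w^3$ gives
\begin{equation*}
w^3=e^{3is/\varepsilon^2}(v^n)^3+3e^{is/\varepsilon^2}|v^n|^2v^n+{\rm c.c.}
\end{equation*}
Multiplying by $\lambda$ and using \eqref{gvnhvn}, we obtain
\begin{equation*}
\lambda u^3=e^{is/\varepsilon^2}g^n+e^{-is/\varepsilon^2}\overline{g^n}+e^{3is/\varepsilon^2}h^n+e^{-3is/\varepsilon^2}\overline{h^n}+\lambda f^n .
\end{equation*}
Here I take $f^n$ as in \eqref{gnhnfn}, where $f$ already includes the factor $\lambda$.

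Dividing the Fourier-space ODE by $\varepsilon^2$ gives $\widehat{u}_l''+\omega_l^2\widehat{u}_l=-\varepsilon^{-2}\widehat{F}_l$, where $F$ is the bracketed source above. Duhamel's formula on $[0,\tau]$ then yields \eqref{lm2Fu}, with kernel $\sin(\omega_l(\tau-\theta))/(\varepsilon^2\omega_l)$. Differentiating in $\tau$, or equivalently using the derivative of the fundamental solution, yields \eqref{lm2Fu'}, with kernel $\cos(\omega_l(\tau-\theta))/\varepsilon^2$. The boundary term vanishes because the sine kernel is zero at $\theta=\tau$.

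The main point to check is that the decomposed functions reproduce the exact $u$ on the whole interval; this relies on uniqueness of the smooth solution together with the matching initial data verified above. Everything else is routine bookkeeping. Beyond that, the only care needed is to keep the sign and the $\lambda$ factor in $f$ consistent with \eqref{2.2} when identifying the source terms.
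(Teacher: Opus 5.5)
Your proposal is correct and follows the paper's proof. You Fourier-transform the equation on $[t_n,t_{n+1}]$, apply Duhamel's formula to $\widehat{u}_l''+\omega_l^2\widehat{u}_l=-\varepsilon^{-2}\widehat{(\lambda u^3)}_l$, split $\lambda u^3$ through \eqref{ansatz1D} into the $g^n$, $h^n$, $f^n$ pieces, and differentiate in $s$ to get \eqref{lm2Fu'}. Your uniqueness check that $e^{is/\varepsilon^2}v^n+e^{-is/\varepsilon^2}\overline{v^n}+r^n$ equals $u(\cdot,t_n+s)$ makes explicit a step the paper takes for granted.

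One bookkeeping slip needs fixing. The function $f$ in \eqref{2.2} already contains $\lambda$, so the correct identities are $u^3=w^3+f^n/\lambda$ and $\lambda u^3=\lambda w^3+f^n$. It is this $f^n$, not $\lambda f^n$ as you wrote, that produces the coefficient $\widehat{(f^n)}_l$ in \eqref{lm2Fu}.
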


\begin{proof}
    Recalling the problem 
\eqref{NKGE1d} on the interval $[t_n, t_{n+1}]$ and taking the Fourier transform, we have
\begin{align*}
    \varepsilon^2 \widehat{u}_l''(t_n+s) + \varepsilon^2 \omega_l^2 \widehat{u}_l (t_n+s) + \lambda \widehat{(u^3)}_l (t_n+s) = 0, \quad 0\le s\le \tau.
\end{align*}
Applying Duhamel's principle, we obtain for $0 \le s \le \tau$
\begin{align}
\widehat{u}_l(t_n + s)= & \mathrm{cos}(\omega_l s) \widehat{u}_l (t_n) + \frac{\mathrm{sin}(\omega_l s)}{\omega_l} \widehat{u}_l'(t_n) \nonumber\\
& -\lambda \int_0^s \frac{\mathrm{sin}(\omega_l(s- \theta))}{\varepsilon^2 \omega_l} \widehat{(u^3)}_l(t_n + \theta) d \theta.
\label{lm2.1}
\end{align}
Noticing \eqref{2.2}, \eqref{ansatz1D} and \eqref{gvnhvn}, we have 
\begin{align}
    \lambda \left(u(x,t_n + s) \right)^3 = \left( e^{is/ \varepsilon^2} g^n + e^{3is / \varepsilon^2} h^n + {\rm c.c.} \right) + f^n,
    \label{lm2.2}
\end{align}
where $g^n$, $h^n$ and $f^n$ are given in \eqref{gnhnfn}-\eqref{gvnhvn}.

Plugging \eqref{lm2.2} into \eqref{lm2.1}, we obtain \eqref{lm2Fu}. Differentiating \eqref{lm2.1} with respect to $s$, we can get \eqref{lm2Fu'} in a similar manner and thus the proof is completed.
\end{proof}

\begin{lemma} \label{lemma4.3}
    (New formula of the MTI-FP) For $n \ge 0$, expanding $( I_N \mathbf{u}^n ) (x)$ and $( I_N \dot{\mathbf{u}}^n ) (x)$ in \eqref{4.2} into Fourier series as
    \begin{equation}
\begin{aligned}
& ( I_N \mathbf{u}^n ) (x) = \sum_{l = -N/2}^{N/2-1} \widetilde{(\mathbf{u}^n)}_l e^{i \mu_l (x-a)},\\
&( I_N \dot{\mathbf{u}}^n ) (x) = \sum_{l = -N/2}^{N/2-1} \widetilde{(\dot{\mathbf{u}}^n)}_l e^{i \mu_l (x-a)}, \ x \in \overline{\Omega},
\end{aligned}
\label{lm3u1}
\end{equation}
then we have
\begin{align}
& \widetilde{(\mathbf{u}^{n+1})}_l = \mathrm{cos}(\omega_l \tau) \widetilde{(\mathbf{u}^n)}_l + \displaystyle \frac{\mathrm{sin}(\omega_l \tau)}{\omega_l}\widetilde{(\dot{\mathbf{u}}^{n})}_l - \widetilde{G^n_l},\quad l = \displaystyle - \frac{N}{2},...,\frac{N}{2}-1, \label{lm3u} \\
&\widetilde{(\dot{\mathbf{u}}^{n+1})}_l = - \omega_l \mathrm{sin}(\omega_l \tau) \widetilde{(\mathbf{u}^n)}_l + \mathrm{cos}(\omega_l \tau) \widetilde{(\dot{\mathbf{u}}^{n})}_l - \widetilde{\dot{G}^n_l},
\label{lm3du}
\end{align}
where
\begin{subequations}\label{lm3G}
\begin{align}
\widetilde{G^n_l}  = & e^{i \tau / \varepsilon^2}  c_l(\tau) \widetilde{(\mathbf{g}^{n,0})}_l   + e^{-i \tau / \varepsilon^2}  \overline{c_l}(\tau) \widetilde{(\overline{\mathbf{g}^{n,0}})}_l + p_l(\tau) \widetilde{(\mathbf{h}^{n,0})}_l  +  \overline{p_l}(\tau) \widetilde{( \overline{\mathbf{h}^{n,0}})}_l, \label{lm3G1} \\
\widetilde{\dot{G}^n_l}  = & e^{i \tau / \varepsilon^2} \left[ c_l'(\tau) + \displaystyle \frac{i}{\varepsilon^2} c_l (\tau) \right] \widetilde{(\mathbf{g}^{n,0})}_l  + e^{-i \tau / \varepsilon^2} \left[ \overline{c_l'}(\tau) - \displaystyle \frac{i}{\varepsilon^2} \overline{c_l} (\tau) \right] \widetilde{
(\overline{\mathbf{g}^{n,0}})}_l  \nonumber \\
& + p_l'(\tau) \widetilde{(\mathbf{h}^{n,0})}_l  + \overline{p_l'}(\tau) \widetilde{( \overline{\mathbf{h}^{n,0}})}_l  + \displaystyle \frac{\tau}{2 \varepsilon^2} \widetilde{(\mathbf{f}^{n,1})}_l. \label{lm3G2}
\end{align}
\end{subequations}
\end{lemma}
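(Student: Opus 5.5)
The proof is a direct algebraic computation in discrete Fourier space. I would take the discrete Fourier coefficients of both lines of \eqref{MTIu}, substitute \eqref{MTIvr2}, and use \eqref{MTIIni1} to express $\widetilde{(\mathbf{v}^{n,0})}_l$ through $\widetilde{(\mathbf{u}^n)}_l$ and $\widetilde{(\dot{\mathbf{u}}^n)}_l$. The claim then reduces to an identity for the linear propagator coefficients $a_l,a_l'$ plus bookkeeping of the nonlinear terms.

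\textbf{Conjugate terms.} First I would record how conjugation acts on discrete Fourier coefficients: $\widetilde{(\overline{\mathbf{w}})}_l=\overline{\widetilde{\mathbf{w}}_{-l}}$. Aliasing at $l=-N/2$ can be handled by the usual convention, or by noting that the coefficient functions are even in $\mu_l$. Since $\lambda_l^\pm$, $\omega_l$, $a_l$, $b_l$, $c_l$, $p_l$ and their derivatives depend on $l$ only through $\mu_l^2$, they are even in $l$. Hence the coefficient of $\overline{c_l(\tau)\mathbf{g}^{n,0}}$ at mode $l$ equals $\overline{c_l}(\tau)\widetilde{(\overline{\mathbf{g}^{n,0}})}_l$, and similarly for the other conjugated pieces. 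This produces exactly the terms with $\overline{c_l},\overline{p_l},\overline{c_l'},\overline{p_l'}$ in \eqref{lm3G}. The $\mathbf{r}^{n,1}$, $\dot{\mathbf{r}}^{n,1}$ contributions enter with a minus sign and appear verbatim in $\widetilde{G^n_l}$ and $\widetilde{\dot G^n_l}$. This includes the trapezoidal term $\frac{\tau}{2\varepsilon^2}\widetilde{(\mathbf{f}^{n,1})}_l$.

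\textbf{Linear part, key identity.} Since $\sqrt{1+\varepsilon^2\mu_l^2}=\varepsilon^2\omega_l$, we have
\[
\lambda_l^\pm=-\varepsilon^{-2}\mp\omega_l,\qquad
\lambda_l^+-\lambda_l^-=-2\omega_l,\qquad
\varepsilon^{-2}+\lambda_l^\mp=\pm\omega_l .
\]
Substituting into \eqref{Coeffab} gives
\[
e^{i\tau/\varepsilon^2}a_l(\tau)=\cos(\omega_l\tau)+\frac{i\sin(\omega_l\tau)}{\varepsilon^2\omega_l}.
\]
Adding the conjugate contribution, and using $v+\bar v=u^n$ and $v-\bar v=-i\varepsilon^2\dot u^n$ at the coefficient level (from \eqref{MTIIni1}), yields
\[
\cos(\omega_l\tau)\widetilde{(\mathbf{u}^n)}_l+\frac{\sin(\omega_l\tau)}{\omega_l}\widetilde{(\dot{\mathbf{u}}^n)}_l .
\]
Together with the nonlinear terms collected above, this proves \eqref{lm3u}.

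For \eqref{lm3du}, the velocity formula in \eqref{MTIu} multiplies $\mathbf{v}^{n,1}$ by $e^{i\tau/\varepsilon^2}$ and adds $\frac{i}{\varepsilon^2}$ times it. So I need
\[
e^{i\tau/\varepsilon^2}\Big[a_l'(\tau)+\frac{i}{\varepsilon^2}a_l(\tau)\Big].
\]
Using \eqref{Coeffdab} and $\lambda_l^+\lambda_l^-=\varepsilon^{-4}-\omega_l^2$, I expect this to simplify to
\[
\frac{i}{\varepsilon^2}\cos(\omega_l\tau)-\Big(\omega_l-\frac{1}{\varepsilon^4\omega_l}\Big)\sin(\omega_l\tau)-\frac{\sin(\omega_l\tau)}{\varepsilon^4\omega_l}
\]
or an equivalent form. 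Combined with the conjugate term, the $\varepsilon^{-4}$ pieces should cancel, leaving
\[
-\omega_l\sin(\omega_l\tau)\widetilde{(\mathbf{u}^n)}_l+\cos(\omega_l\tau)\widetilde{(\dot{\mathbf{u}}^n)}_l .
\]
The $c_l$ terms in the velocity formula produce $c_l'+\frac{i}{\varepsilon^2}c_l$ automatically, matching \eqref{lm3G2}.

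\textbf{Main obstacle.} The only delicate step is this trigonometric simplification of $a_l'+\frac{i}{\varepsilon^2}a_l$ and the verification that the real and imaginary parts recombine into $-\omega_l\sin(\omega_l\tau)$ and $\cos(\omega_l\tau)$ exactly. I would check it by writing everything in terms of $e^{\pm i\omega_l\tau}$ before taking real combinations.
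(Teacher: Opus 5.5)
Your proposal is correct and follows essentially the same route as the paper: take discrete Fourier coefficients of \eqref{MTIu}, substitute \eqref{MTIvr2} and \eqref{MTIIni1}, and reduce the linear part to $\mathrm{Re}\{e^{i\tau/\varepsilon^2}a_l(\tau)\}\widetilde{(\mathbf{u}^n)}_l+\varepsilon^2\,\mathrm{Im}\{e^{i\tau/\varepsilon^2}a_l(\tau)\}\widetilde{(\dot{\mathbf{u}}^n)}_l$. The velocity equation is handled analogously with $e^{i\tau/\varepsilon^2}[a_l'(\tau)+\frac{i}{\varepsilon^2}a_l(\tau)]$, which in fact equals $\frac{i}{\varepsilon^2}\cos(\omega_l\tau)-\omega_l\sin(\omega_l\tau)$ outright, so no cancellation against the conjugate term is needed. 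One point you should state explicitly: the identities $v^{n,0}+\overline{v^{n,0}}=u^n$ and $v^{n,0}-\overline{v^{n,0}}=-i\varepsilon^2\dot u^n$ at the coefficient level require $\mathbf{u}^n,\dot{\mathbf{u}}^n$ to be real; this follows by induction from the real initial data and the evenness in $l$ of the symbols, and the paper also uses it implicitly.
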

\begin{proof}
Recalling the MTI-FP method \eqref{MTIu}-\eqref{MTIvr1}, we have
\begin{align}
\widetilde{(\mathbf{u}^{n+1})}_l = e^{i\tau / \varepsilon^2}\widetilde{(\mathbf{v}^{n,1})}_l + e^{-i\tau / \varepsilon^2} \widetilde{(\overline{\mathbf{v}^{n,1}})}_l + \widetilde{(\mathbf{r}^{n,1})}_l,\quad l = -\displaystyle \frac{N}{2},...,\frac{N}{2} - 1. \label{lm3.1}
\end{align}
Plugging the formula \eqref{MTIvr2} for $\widetilde{(\mathbf{v}^{n,1})}_l$ and $\widetilde{(\mathbf{r}^{n,1})}_l$ and the initial data \eqref{MTIIni1} into \eqref{lm3.1}, we can obtain for $l = -\frac{N}{2},..,\frac{N}{2} -1$
\begin{align}
\widetilde{(\mathbf{u}^{n+1})}_l & = \mathrm{Re}\left\{  e^{i \tau / \varepsilon^2} a_l (\tau)\right\} \widetilde{(\mathbf{u}^{n})}_l + \varepsilon^2 \mathrm{Im} \left\{ e^{i \tau / \varepsilon^2} a_l(\tau) \right\} \widetilde{(\dot{\mathbf{u}}^n)}_l   - \widetilde{G_l^n}, \label{lm3.2}
\end{align}
where ${\rm Re}(f)$ and ${\rm Im}(f)$ represent the real part and imaginary part of $f$, respectively.

Thus we can obtain \eqref{lm3u} by applying \eqref{Coeffab}. The proof for \eqref{lm3du} and \eqref{lm3G2} follows similarly with details omitted here for brevity.   
\end{proof}

Introduce local truncation error functions
\begin{align}
    \xi^n(x) = \sum_{l = -N / 2}^{N/2 - 1} \widehat{\xi^n_l}e^{i \mu_l (x - a)}, \quad \dot{\xi}^n(x) =  \sum_{l = -N / 2}^{N/2 - 1} \widehat{\dot{\xi}^n_l}e^{i \mu_l (x - a)}, \quad x \in \overline{\Omega}, 
    \label{loltrun}
\end{align}
where
\begin{align}
& \widehat{\xi_l^n} : = \widehat{u}_l(t_{n+1})
 - \left[  \mathrm{cos}(\omega_l \tau) \widehat{u}_l(t_n) + \displaystyle \frac{\mathrm{sin}(\omega_l \tau)}{\omega_l} \widehat{u}_l'(t_n) - \widehat{\mathcal{G}_l^n} \right], \label{lol}\\
& \widehat{\dot{\xi}_l^n} : = \widehat{u}_l'(t_{n+1})
 - \left[ - \omega_l \mathrm{sin}(\omega_l \tau) \widehat{u}_l(t_n) + \mathrm{cos}(\omega_l \tau) \widehat{u}_l'(t_n) - \widehat{\dot{\mathcal{G}}_l^n} \right],
\label{dlol}
\end{align}
with
\begin{subequations}\label{lolg}
\begin{align}
\widehat{\mathcal{G}_l^n}  := & e^{i \tau / \varepsilon^2} c_l(\tau) \widehat{(g^n)}_l (0)  + e^{-i \tau / \varepsilon^2}  \overline{c_l}(\tau) \widehat{(\overline{g^n})}_l (0)  \nonumber \\
& + \ p_l(\tau) \widehat{(h^n)}_l(0)  + \overline{p_l}(\tau) \widehat{( \overline{h^n})}_l(0), \label{lg}\\       
\widehat{\dot{\mathcal{G}}_l^n} := &      e^{i \tau / \varepsilon^2} \left[ c_l'(\tau) + \displaystyle \frac{i}{\varepsilon^2} c_l (\tau) \right] \widehat{(g^n)}_l(0)  +  \ e^{-i \tau / \varepsilon^2} \left[ \overline{c_l'}(\tau) - \displaystyle \frac{i}{\varepsilon^2} \overline{c_l} (\tau) \right] \widehat{(\overline{g^n})}_l (0)  \nonumber \\ 
& + \ p_l'(\tau) \widehat{(h^n)}_l(0)  + \overline{p_l'}(\tau) \widehat{( \overline{h^n})}_l(0)  + \displaystyle \frac{\tau}{2 \varepsilon^2} \widehat{(f^n)}_l (\tau). \label{ldg}
\end{align}
\end{subequations}
In addition, define the errors from the nonlinear terms as 
\begin{align}
    \eta^n (x) : = \sum_{l = -N /2}^{N/2 - 1} \widehat{\eta^n_l} e^{i \mu_l (x-a)}, \quad \dot{\eta}^n (x) : = \sum_{l = -N /2}^{N/2 - 1} \widehat{\dot{\eta}^n_l} e^{i \mu_l (x-a)},\quad x \in \overline{\Omega}, \label{Nonerr1}
\end{align}
where
\begin{align}
\widehat{\eta^n_l} := \widehat{\mathcal{G}_l^n} - \widetilde{G_l^n}, \qquad \widehat{\dot{\eta}_l^n} : = \widehat{\dot{\mathcal{G}}_l^n} - \widetilde{\dot{G}_l^n}. \label{Nonerr2}
\end{align}
Subtracting the exact flow \eqref{lm2Fu} and \eqref{lm2Fu'} from the numerical flow \eqref{lm3u} and \eqref{lm3du}, we have the following error equations (in phase space) 
\begin{align}
& \widehat{(e^{n+1}_N)}_l = \mathrm{cos}(\omega_l \tau) \widehat{(e^n_N)}_l + \displaystyle \frac{\mathrm{sin}(\omega_l \tau)}{\omega_l}  \widehat{( \dot{e}^n_N)}_l + \widehat{\xi_l^n}- \widehat{\eta_l^n}, \quad l = -\frac{N}{2},...,\frac{N}{2}-1,\label{errorequ1} \\
     & \widehat{(\dot{e}^{n+1}_N)}_l = -\omega_l \mathrm{sin}(\omega_l \tau) \widehat{(e^n_N)}_l + \mathrm{cos}(\omega_l \tau) \widehat{(\dot{e}_N^n)}_l + \widehat{\dot{\xi}_l^n}- \widehat{\dot{\eta}_l^n}.\label{errorequ2}
 \end{align}

\subsection{Energy estimates for error functions}

For the local truncation error functions in \eqref{loltrun}, we have the following estimates.

\begin{lemma}\label{lemma4.4}(Local truncation error) Under the assumption (A), when $0<\tau \le \tau_1$, we have two independent estimates for $0 < \varepsilon \le 1$
    \begin{align}
        \mathcal{E}(\xi^n,\dot{\xi}^n) \lesssim \displaystyle \frac{\tau^6}{\varepsilon^6}, \quad \mathcal{E}(\xi^n,\dot{\xi}^n ) \lesssim \tau^2 \varepsilon
        ^2 + \displaystyle \frac{\tau^4}{\varepsilon^2},\qquad n = 0,1,...,\frac{T}{\tau} - 1. \label{lm4lol}
    \end{align}
\end{lemma}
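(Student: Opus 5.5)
The plan is to start from the exact formulas of Lemma \ref{lemma4.2} and subtract the definitions \eqref{lol}--\eqref{dlol}. This writes $\widehat{\xi^n_l}$ and $\widehat{\dot\xi^n_l}$ purely as quadrature errors of the Duhamel integrals:
\begin{align*}
\widehat{\xi^n_l} = &-\int_0^\tau \frac{\sin(\omega_l(\tau-\theta))}{\varepsilon^2\omega_l}\Big[e^{i\theta/\varepsilon^2}\widehat{(g^n)}_l(\theta)+{\rm c.c.}+e^{3i\theta/\varepsilon^2}\widehat{(h^n)}_l(\theta)+{\rm c.c.}+\widehat{(f^n)}_l(\theta)\Big]d\theta\\
&+\widehat{\mathcal{G}^n_l}.
\end{align*}
An analogous formula holds for $\widehat{\dot\xi^n_l}$ with kernel $\cos(\omega_l(\tau-\theta))/\varepsilon^2$. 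Using that $v^n,r^n$ solve \eqref{NLSWv1d}--\eqref{NKGEr1d} exactly (so the $g$-part of the Duhamel integral equals $e^{i\tau/\varepsilon^2}$ times the $v$-Duhamel term, etc.), I will split each error into three pieces. The first is the $g$-piece, i.e.\ the error of freezing $\widehat{(g^n)}_l(\theta)\approx\widehat{(g^n)}_l(0)$ against $b_l$ (or $b_l'$). The second is the $h$-piece, the same freezing against the kernel defining $p_l,p_l'$. The third is the $f$-piece, the trapezoidal error, where only the $s=\tau$ endpoint is kept in the $\dot\xi$ equation and nothing in the $\xi$ equation, since the sine kernel vanishes at both endpoints.

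For each piece I write $\widehat{(g^n)}_l(\theta)-\widehat{(g^n)}_l(0)=\int_0^\theta\widehat{(\partial_s g^n)}_l(\sigma)\,d\sigma$. Here the key point is the simplified transmission condition: $\partial_s g^n(0)=\partial_s h^n(0)=0$ because $\partial_s v^n(\cdot,0)=0$. Hence the difference is also $\int_0^\theta(\theta-\sigma)\widehat{(\partial_{ss}g^n)}_l(\sigma)\,d\sigma$. The two representations give the two bounds.

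\textbf{First bound $\tau^6/\varepsilon^6$.} I use the second-order Taylor form together with Lemma \ref{lemma4.1}, namely $\|\partial_{ss}v^n\|_{H^{m_0-1}}\lesssim\varepsilon^{-2}$, so $\|\partial_{ss}g^n\|,\|\partial_{ss}h^n\|\lesssim\varepsilon^{-2}$ in $H^{m_0-1}$ (which has plenty of room for $H^2$-type norms with $m_0\ge4$). The kernels satisfy $|b_l|,|\sin(\cdot)/(\varepsilon^2\omega_l)|\lesssim1$ and $|b_l'|,|\cos|/\varepsilon^2\lesssim\varepsilon^{-2}$. The $\xi$-piece is therefore $\lesssim\tau^3\varepsilon^{-2}$ and the $\dot\xi$-piece $\lesssim\tau^3\varepsilon^{-4}$. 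In the energy $\mathcal{E}$ (weights $\varepsilon^{-2}$ on $\xi$ in $H^1$, on $\partial_x\xi$ in $H^1$, and $\varepsilon^2$ on $\dot\xi$) this gives $\tau^6/\varepsilon^6$. For the $f$-piece I use $\|f^n\|\lesssim\varepsilon^2$, $\|\partial_s f^n\|\lesssim1$ and $\|\partial_{ss}f^n\|\lesssim\varepsilon^{-2}$ (by \eqref{lm1r}, the fact that $\|r^n\|\lesssim\varepsilon^2$ multiplies $O(\varepsilon^{-2})$ derivatives of the phases, and \eqref{4.15}). The trapezoidal error is then $\lesssim\tau^3\varepsilon^{-2}\cdot\varepsilon^{-2}$ for $\dot\xi$, and even better for $\xi$, consistent with $\tau^6/\varepsilon^6$.

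\textbf{Second bound $\tau^2\varepsilon^2+\tau^4/\varepsilon^2$.} Here I cannot pay $\varepsilon^{-2}$ per time derivative. Instead I use first-order freezing with $\|\partial_s g^n\|\lesssim1$. For the $h$-piece and the $\dot\xi$ parts with $\varepsilon^{-2}$ kernels, I exploit the oscillatory phases $e^{3i\theta/\varepsilon^2}$ and the $e^{\pm i\omega_l\theta}$ factors: one integration by parts, exactly as in the proof of Lemma \ref{lemma4.1}, gains a factor $\varepsilon^2$. This yields errors $\lesssim\varepsilon^2\tau$ for $\varepsilon^2\dot\xi$ and $\lesssim\varepsilon^2\tau$ for $\xi$, i.e.\ $\varepsilon^{-2}\|\xi\|^2\lesssim\varepsilon^2\tau^2$. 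For the $g$-piece of $\xi$, the non-oscillatory component of $b_l$ gives $\tau^2$, i.e.\ a contribution $\tau^4/\varepsilon^2$, while its oscillatory component $e^{is\lambda_l^-}$ (frequency about $2/\varepsilon^2$) gives $\varepsilon^2\tau$ after integration by parts. The $f$-piece uses only $\|f^n\|\lesssim\varepsilon^2$ and $\|\partial_sf^n\|\lesssim1$: $\|\xi_f\|\lesssim\tau\varepsilon^2$ and $\varepsilon^2\|\dot\xi_f\|\lesssim\tau\varepsilon^2$. Summing in $\mathcal{E}$ gives $\tau^2\varepsilon^2+\tau^4/\varepsilon^2$.

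\textbf{Main obstacle.} The delicate part is the bookkeeping of the $\dot\xi$ $g$-piece with the combined kernel $b_l'+\frac{i}{\varepsilon^2}b_l$. One must check that its non-oscillatory part is $O(1)$ uniformly in $\varepsilon$ and $l$ (using $\lambda_l^+\approx-2/\varepsilon^2$, $\lambda_l^-\approx\mu_l^2/2$), so that no spurious $\varepsilon^{-2}$ appears. One must also control the $\mu_l^2$ factors from $\beta_l$ in the integrations by parts via the $H^{m_0-1}$ regularity, weighted by $1+\mu_l^2+\mu_l^4$ for the $\|\partial_x\cdot\|_{H^1}$ part of $\mathcal{E}$. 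I would first verify these symbol estimates explicitly for $a_l,b_l,c_l,p_l$ and their derivatives, then sum over $l$ with Parseval.
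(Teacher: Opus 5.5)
\textbf{First bound.} Your first bound is essentially the paper's argument: second-order Taylor using $\partial_s g^n(\cdot,0)=\partial_s h^n(\cdot,0)=0$, the kernel bounds, and the trapezoidal error for the $f$-term with $\|f^n\|\lesssim\varepsilon^2$, $\|\partial_s f^n\|\lesssim1$, $\|\partial_{ss}f^n\|\lesssim\varepsilon^{-2}$.

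\textbf{Second bound: a heavier route than needed.} The paper uses no oscillatory integration by parts here. It truncates the Taylor expansion at first order and bounds the kernels crudely:
\[
\frac{|\sin(\omega_l(\tau-\theta))|}{\varepsilon^2\omega_l}\le 1,\qquad \frac{|\mu_l|}{\varepsilon^2\omega_l}\le\varepsilon^{-1},\qquad \frac{|\cos(\omega_l(\tau-\theta))|}{\varepsilon^2}\le\varepsilon^{-2}.
\]
It combines these with $\|\partial_s g^n\|+\|\partial_s h^n\|\lesssim1$ in $L^\infty([0,\tau];H^1)$, $\|f^n\|\lesssim\varepsilon^2$, and Cauchy--Schwarz in $\theta$. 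This already gives $\|\xi^n\|_{H^1}^2\lesssim\tau^4+\tau^2\varepsilon^4$, $\|\partial_x\xi^n\|_{H^1}^2\lesssim\tau^2\varepsilon^2+\tau^4/\varepsilon^2$ and $\|\dot\xi^n\|_{H^1}^2\lesssim\tau^2+\tau^4/\varepsilon^4$. After the weights of $\mathcal{E}$, all of these fit into $\tau^2\varepsilon^2+\tau^4/\varepsilon^2$.

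In particular the $h$-piece needs no gain. Its crude size, $\tau^2$ in $\xi^n$ and $\tau^2/\varepsilon^2$ in $\dot\xi^n$, costs only $\tau^4/\varepsilon^2$ in $\mathcal{E}$. Your integrations by parts are legitimate for the genuinely oscillatory components, with $e^{\mp i\lambda_l^-\theta}$ treated as an amplitude and paid for by $\mu_l^2$ as in Lemma~\ref{lemma4.1}. They just buy nothing here and cost two extra spatial derivatives.

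\textbf{The step that fails.} The step you single out as the main obstacle is false. Since $e^{is/\varepsilon^2}b_l(s)=\sin(\omega_l s)/(\varepsilon^2\omega_l)$, differentiation gives
\[
b_l'(s)+\frac{i}{\varepsilon^2}b_l(s)=\frac{e^{-is/\varepsilon^2}\cos(\omega_l s)}{\varepsilon^2}=\frac{1}{2\varepsilon^2}\left(e^{is\lambda_l^+}+e^{is\lambda_l^-}\right).
\]
So the non-oscillatory part $\frac{1}{2\varepsilon^2}e^{is\lambda_l^-}$ has size exactly $\varepsilon^{-2}$, not $O(1)$, and the check you propose would fail.

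Generically, $\widehat{(g^n)}_l(\theta)-\widehat{(g^n)}_l(0)$ has a non-oscillatory $O(\theta)$ part, because $\partial_s v^n$ has a non-vanishing non-oscillatory component. Hence the $g$-piece of $\dot\xi^n$ is genuinely of order $\tau^2/\varepsilon^2$, and your claim that $\varepsilon^2\dot\xi^n$ is $O(\varepsilon^2\tau)$ is wrong for that piece.

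This is harmless: that piece contributes $\varepsilon^2(\tau^2/\varepsilon^2)^2=\tau^4/\varepsilon^2$ to $\mathcal{E}$, which is exactly the second term of the target. So this $\varepsilon^{-2}$ is not spurious and need not be avoided, and the lemma still follows.

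A minor slip: in your sentence on the $g$-piece of $\xi^n$ the labels are swapped. The component with frequency about $-2/\varepsilon^2$ is $e^{is\lambda_l^+}$, not $e^{is\lambda_l^-}$.
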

\begin{proof}
    Noting $e^{i\tau / \varepsilon^2} b_l(\tau - \theta) = \frac{\mathrm{sin}(\omega_l ( \tau - \theta))}{\varepsilon^2 \omega_l} e^{i\theta / \varepsilon^2}$ and \eqref{Coeffcp}, plugging the formula of the exact solution \eqref{lm2Fu} into \eqref{lol} and applying Taylor's expansion, we get
\begin{align}
    \widehat{ \xi^n_l}  = & - \int_0^{\tau} \displaystyle \frac{\mathrm{sin}(\omega_l (\tau - \theta))}{\varepsilon^2 \omega_l} \theta^2 \left[  e^{i \theta / \varepsilon^2} \int_0^1 \widehat{(g^n)}_l ''(\theta \rho)(1 - \rho) d \rho  \right.\nonumber \\
    & \left. + e^{-i \theta / \varepsilon^2} \int_0^1 \widehat{(\overline{g^n})}_l ''(\theta \rho)(1 - \rho) d \rho + e^{3i \theta / \varepsilon^2} \int_0^1 \widehat{(h^n)}_l ''(\theta \rho)(1 - \rho) d \rho 
    \right. \nonumber\\
    & \left. + e^{-3i \theta / \varepsilon^2} \int_0^1 \widehat{(\overline{h^n})}_l ''(\theta \rho)(1 - \rho) d \rho \right] d\theta - \int_0^{\tau} \displaystyle \frac{\mathrm{sin}(\omega_l(\tau - \theta))}{\varepsilon^2 \omega_l} \widehat{(f^n)}_l (\theta) d \theta. \label{lm4.1}
\end{align}
Noting $\left. f^n\right|_{s=0} = 0$, we apply the error estimates of the trapezoidal rule and get
\begin{align*}
\displaystyle \left| \int_0^{\tau} \frac{\mathrm{sin}(\omega_l(\tau - \theta))}{\varepsilon^2 \omega_l} \widehat{(f^n)}_l (\theta) d \theta \right| \lesssim \int_0^{\tau} \frac{\theta (\tau - \theta)}{\varepsilon^2 \omega_l} \left| \frac{d^2 }{d \theta^2} \left[ \mathrm{sin}(\omega_l ( \tau - \theta)) \widehat{(f^n)}_l (\theta) \right] \right| d \theta .
\end{align*}
Thus we can obtain the following bound of \eqref{lm4.1}
\begin{align}
    \left| \widehat{\xi_l^n} \right|\lesssim&\displaystyle \frac{\tau^2}{\varepsilon^2 \omega_l} \int_0^{\tau} \int_0^1 \left[\left| \widehat{(g^n)}_l ''(\theta \rho) \right| + \left| \widehat{(\overline{g^n})}_l ''(\theta \rho) \right| + \left| \widehat{(h^n)}_l ''(\theta \rho) \right| + \left| \widehat{(\overline{h^n})}_l ''(\theta \rho) \right|\right] d \rho d \theta \nonumber \\
    & \ +  \displaystyle \frac{\tau^2}{\varepsilon^2 \omega_l} \int_0^{\tau}\left[ \left| \widehat{(f^n)}_l''(\theta) \right| + \left| \omega_l \widehat{(f^n)}_l'(\theta) \right|  + \left| \omega_l^2 \widehat{(f^n)}_l(\theta)  \right|\right] d \theta. \label{lm4.2}
\end{align}
Noting $\frac{1}{\varepsilon^2 \omega_l} \le 1$ for $l = -N/2,...,N/2 -1$, we get from \eqref{lm4.2} that for $0 < \tau \le \tau_1$,
\begin{align}
    \left\| \xi^n \right\|_{H^1}^2 \lesssim & \tau^6 \left[ \left\| \partial_{ss} g^n \right\|_{L^{\infty}([0,\tau];H^1)}^2 + \left\| \partial_{ss} h^n \right\|_{L^{\infty}([0,\tau];H^1)}^2 + \left\| \partial_{ss} f^n \right\|_{L^{\infty}([0,\tau];H^1)}^2 \displaystyle \right.\nonumber \\
    & \left. + \displaystyle \frac{1}{\varepsilon^4} \left\| \partial_{s} f^n \right\|_{L^{\infty}([0,\tau];H^2)}^2 + \displaystyle \frac{1}{\varepsilon^8} \left\|  f^n \right\|_{L^{\infty}([0,\tau];H^3)}^2 \right]. \label{lm4.3}
\end{align}

By the definition \eqref{gnhnfn} and \eqref{2.2}, adopting the Sobolev embedding theorem \citep{evans2022partial} and applying the prior estimates \eqref{lm1v} and \eqref{lm1r} in Lemma \ref{lemma4.1}, we obtain
\begin{align*}
    \left\| f^n \right\|_{H^3} & \lesssim \left\|  ( r^n)^3 + 3  (r^n)^2 \left(e^{is / \varepsilon^2}v^n +e^{-is / \varepsilon^2} \overline{v^n} \right) + 3 r^n \left( e^{is / \varepsilon^2}v^n + e^{-is / \varepsilon^2}\overline{v^n} \right)^2 \right\|_{H^3} \nonumber \\
    & \lesssim \left\| r^n \right\|_{H^3}^3 + \left\| r^n \right\|_{H^3}^2 \left\| v^n \right\|_{H^3} + \left\| r^n \right\|_{H^3} \left\| v^n \right\|_{H^3}^2 \lesssim \varepsilon^2,\quad 0 \le s \le \tau,
\end{align*}
which implies 
\begin{align} \label{est_fn}
    \left\| f^n \right\|_{L^{\infty}([0,\tau];H^3)} \lesssim \varepsilon^2.
\end{align}
Similarly, we get the estimates for $g^n$, $h^n$ and $\partial_{s} f^n$
\begin{equation}
\begin{aligned}
  &  \left\| \partial_{s}^k g^n \right\|_{L^{\infty}([0,\tau];H^1)}  +   \left\| \partial_{s}^k h^n \right\|_{L^{\infty}([0,\tau];H^1)} \lesssim \varepsilon^{2-2k}, \\
  &\left\| \partial_{s} f^n \right\|_{L^{\infty}([0,\tau];H^2)} \lesssim 1,\qquad k = 1,2. 
  \end{aligned}\label{est_others}
\end{equation}

\noindent Based on the definition \eqref{gnhnfn} and \eqref{2.2}, we have the following estimate of $\partial_{ss} f^n$ for $0 \le s \le \tau$
\begin{align*}
    &\left\| \partial_{ss}f^n  \right\|_{H^1} \\
    &\lesssim \left\| \partial_{ss} \left[ ( r^n)^3 + 3  (r^n)^2 \left(e^{is / \varepsilon^2}v^n +e^{-is / \varepsilon^2} \overline{v^n} \right) + 3 r^n \left( e^{is / \varepsilon^2}v^n + e^{-is / \varepsilon^2}\overline{v^n} \right)^2 \right] \right\|_{H^1}.
\end{align*}
By the Sobolev embedding theorem \citep{evans2022partial} and the prior estimates \eqref{lm1v} and \eqref{lm1r}, we get
\begin{align*}
\left\| \partial_{ss} (r^n)^3 \right\|_{H^1} & \lesssim \left\| (r^n)^2 \partial_{ss} r^n \right\|_{H^1} + \left\| r^n (\partial_s r^n )^2 \right\|_{H^1} \\
& \lesssim \left\| (r^n)^2 \partial_{ss} r^n \right\|_{L^2} + \left\| \partial_x ( (r^n)^2 \partial_{ss} r^n ) \right\|_{L^2} + \left\| r^n (\partial_s r^n )^2 \right\|_{H^1} \\
 & \lesssim \left\| r^n \right\|_{L^{\infty}}^2 \left\| \partial_{ss}   r^n \right\|_{L^2} + \left\| \partial_x  (r^n)^2  \right\|_{L^\infty} \left\| \partial_{ss} r^n \right\|_{L^2} \\
 & \quad +  \left\| r^n \right\|_{L^{\infty}}^2 \left\| \partial_x (\partial_{ss} r^n) \right\|_{L^2} + \left\| r^n (\partial_s r^n )^2 \right\|_{H^1} \\
    & \lesssim \left\| r^n \right\|_{L^{\infty}([0,\tau];H^3)}^2 \left\| \partial_{ss} r^n \right\|_{L^{\infty}([0,\tau];H^1)} \\
    & \quad + \left\| r^n \right\|_{L^{\infty}([0,\tau],H^3)} \left\| \partial_s r^n \right\|_{L^{\infty}([0,\tau],H^3)}^2 
    \lesssim \varepsilon^2.
\end{align*}
Processing other terms similarly, we obtain
\begin{align}
    \left\| \partial_{ss} f^n \right\|_{L^{\infty}([0,\tau];H^1)} \lesssim \frac{1}{\varepsilon^2}. \label{ddfn}
\end{align}
Plugging \eqref{est_fn}-\eqref{ddfn} into \eqref{lm4.3}, we get 
\begin{align}
    \left\| \xi^n \right\|_{H^1}^2 \lesssim \displaystyle \frac{\tau^6}{\varepsilon^4}.
\end{align}
Similarly, by noting $\frac{\left| \mu_l \right|}{\varepsilon^2 \omega_l} \le \frac{1}{\varepsilon}$, we have
\begin{align*}
    \left\| \partial_x\xi^n \right\|_{H^1}^2 \lesssim \displaystyle \frac{\tau^6}{\varepsilon^6},\qquad \left\| \dot{\xi}^n \right\|_{H^1}^2 \lesssim \displaystyle \frac{\tau^6}{\varepsilon^8},
\end{align*}
which completes the first estimate in \eqref{lm4lol}.

On the other hand, by applying Taylor's expansion and truncating at the first order term, we get
\begin{align*}
\widehat{\xi^n_l}= & - \int_0^{\tau} \displaystyle \frac{\mathrm{sin}(\omega_l (\tau - \theta))}{\varepsilon^2 \omega_l} \theta e^{i \theta / \varepsilon^2} \int_0^1 \widehat{(g^n)}_l'(\theta s) ds d \theta  \\
& - \int_0^{\tau} \displaystyle \frac{\mathrm{sin}(\omega_l (\tau - \theta))}{\varepsilon^2 \omega_l} \theta e^{-i \theta / \varepsilon^2} \int_0^1 \widehat{(\overline{g^n})}_l'(\theta s) ds d \theta  \\
&- \int_0^{\tau} \displaystyle \frac{\mathrm{sin}(\omega_l (\tau - \theta))}{\varepsilon^2 \omega_l} \theta e^{3i \theta / \varepsilon^2} \int_0^1 \widehat{(h^n)}_l'(\theta s) ds d \theta \\ 
& - \int_0^{\tau} \displaystyle \frac{\mathrm{sin}(\omega_l (\tau - \theta))}{\varepsilon^2 \omega_l} \theta e^{-3i \theta / \varepsilon^2} \int_0^1 \widehat{(\overline{h^n})}_l'(\theta s) ds d \theta  \\
&-\int_0^{\tau} \displaystyle \frac{\mathrm{sin}(\omega_l (\tau - \theta))}{\varepsilon^2 \omega_l} \widehat{(f^n)}_l (\theta) d \theta. 
\end{align*}
Thus we can obtain the following bound of the above equation
\begin{align}
    \left| \widehat{\xi^n_l} \right| \le & \frac{\tau}{\varepsilon^2 \omega_l} \left[ \int_0^{\tau} \int_0^1 \left| \widehat{(g^n)}_l'(\theta s) \right| ds d \theta  
     + \int_0^{\tau} \int_0^1 \left| \widehat{(\overline{g^n})}_l'(\theta s) \right| ds d \theta \right. \nonumber \\
     & \left. +  \int_0^{\tau}  \int_0^1 \left| \widehat{(h^n)}_l'(\theta s) \right| ds d \theta   + \int_0^{\tau} \int_0^1 \left| \widehat{(\overline{h^n})}_l'(\theta s) \right| ds d \theta \right] \nonumber \\
     & + \frac{1}{\varepsilon^2 \omega_l} \int_0^{\tau} \left| \widehat{(f^n)}_l (\theta) \right| d \theta.\label{lm4.4}
\end{align}
By applying the Cauchy–Schwarz inequality, i.e.
\begin{align*}
& \begin{aligned}
 \left( \int_0^\tau \int_0^1 \left| \widehat{(g^n)}_l'(\theta s) \right| ds d \theta \right)^2 & \le \int_0^{\tau} 1 d\theta \cdot \int_0^{\tau} \left( \int_0^1 \left| \widehat{(g^n)}_l'(\theta s) \right| ds \right)^2 d \theta \\
 & \le \tau \int_0^{\tau} \int_0^1  \left| \widehat{(g^n)}_l'(\theta s) \right|^2 ds d \theta, 
 \end{aligned}\\
& \left( \int_0^{\tau}  \left| \widehat{(f^n)}_l(\theta)  \right| d\theta \right)^2  \le \int_0^\tau 1 d \theta \cdot  \int_0^\tau  \left| \widehat{(f^n)}_l(\theta) \right|^2 d \theta \le \tau \int_0^\tau  \left| \widehat{(f^n)}_l(\theta) \right|^2 d \theta,
\end{align*}
noticing $\varepsilon^2 \omega_l \ge 1$, we use \eqref{est_fn}-\eqref{est_others} and get from \eqref{lm4.4} that
\begin{align}
   \left\| \xi^n \right\|_{H^1}^2 \lesssim &  \tau^4 \left[  \left\| \partial_s g^n \right\|_{L^{\infty}([0,\tau];H^1)}^2  + \left\| \partial_s h^n \right\|_{L^{\infty}([0,\tau];H^1)}^2 \right] + \tau^2 \left\| f^n \right\|_{L^{\infty}([0,\tau];H^1)}^2 \nonumber \\
    \lesssim & \tau^4 +   \tau^2 \varepsilon^4. \label{lm4.5}
\end{align}
Similarly, we can obtain
\begin{align*}
    \left\| \partial_x \xi^n \right\|_{H^1}^2 \lesssim \tau^2 \varepsilon^2 +  \displaystyle \frac{\tau^4}{\varepsilon^2},\qquad \left\| \dot{\xi}^n \right\|_{H^1}^2 \lesssim \tau^2 + \displaystyle \frac{\tau^4}{\varepsilon^4}.
\end{align*}
Thus with the definition of the energy functional \eqref{enefun}, we obtain the two independent estimates in \eqref{lm4lol}. 
\end{proof}

Similarly, for the error functions in \eqref{Nonerr1}, we have the following estimate.
\begin{lemma}\label{lemma4.5}(Nonlinear terms error) Under the assumption (A) and assuming \eqref{thm1err2} holds for $n$ (which will be proved by induction later), we have for any $0 < \tau \le \tau_1$,
    \begin{align}
        \mathcal{E}(\eta^n, \dot{\eta}^n) \lesssim \tau^2 \mathcal{E}(e_N^n,\dot{e}_N^n) + \displaystyle \frac{\tau^2 h^{2 m_0 }}{\varepsilon^2}, \quad n = 0,1,...,\displaystyle \frac{T}{\tau} -1. \label{lm5result}
    \end{align}
\end{lemma}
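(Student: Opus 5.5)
The plan is to estimate $\eta^n$ and $\dot\eta^n$ mode by mode from \eqref{Nonerr2}, \eqref{lolg} and \eqref{lm3G}. Every term has the form (oscillatory coefficient) $\times$ (Fourier coefficient of a difference of nonlinearities). So the work splits into two parts: uniform bounds on the coefficients $c_l,c_l',p_l,p_l'$, and $H^1$-Lipschitz bounds for $g,h,f$ in terms of $e_N^n,\dot e_N^n$.

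\emph{Step 1: coefficient bounds.} Using $\lambda_l^--\lambda_l^+=2\varepsilon^{-2}\sqrt{1+\varepsilon^2\mu_l^2}=2\omega_l$, we have $|b_l(s)|\le (\varepsilon^2\omega_l)^{-1}=(1+\varepsilon^2\mu_l^2)^{-1/2}$. Hence $|c_l(\tau)|\le \tau(1+\varepsilon^2\mu_l^2)^{-1/2}$ and, likewise, $|p_l(\tau)|\le\tau(\varepsilon^2\omega_l)^{-1}$. For the derivative coefficients we have $|b_l'|\lesssim \varepsilon^{-2}$ (since $|\lambda_l^\pm|/(\varepsilon^2\omega_l)\lesssim \varepsilon^{-2}$). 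Therefore $|c_l'(\tau)+\tfrac{i}{\varepsilon^2}c_l(\tau)|\lesssim\tau/\varepsilon^2$, $|p_l'(\tau)|\le\tau/\varepsilon^2$, and the $f$-coefficient is $\tau/(2\varepsilon^2)$.

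The weight of $\eta$ in $\mathcal E$ is $(1+\mu_l^2)(\varepsilon^{-2}+\mu_l^2)=\varepsilon^{-2}(1+\mu_l^2)(1+\varepsilon^2\mu_l^2)$. This is exactly compensated by the square of the coefficient bounds. We then obtain
\[
\mathcal E(\eta^n,\dot\eta^n)\lesssim \frac{\tau^2}{\varepsilon^2}\Big(\|D_g\|_{H^1}^2+\|D_h\|_{H^1}^2+\|D_f\|_{H^1}^2\Big),
\]
where $D_g:=g(v^n(\cdot,0))-I_N\mathbf g^{n,0}$, $D_h$ is defined analogously, and $D_f:=f^n(\cdot,\tau)-I_N\mathbf f^{n,1}$. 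Only the low modes enter, and we may bound the projected differences by the full ones.

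\emph{Step 2: the $g,h$ differences.} Split $D_g=[g(v^n(0))-I_N g(v^n(0))]+I_N[g(v^n(0))-g(\mathbf v^{n,0})]$. The first part is an interpolation error, $\lesssim h^{m_0}$, by Lemma \ref{lemma4.1} and assumption (A). For the second part we use the $H^1$-stability of $I_N$ on $X_N$-type functions together with the $H^1$ algebra property in 1D. The bounds \eqref{thm1err2} at level $n$ and the bound $\|v^n(0)\|_{H^{m_0+1}}\lesssim1$ make the cubic nonlinearity Lipschitz. This gives
\[
\|D_g\|_{H^1}\lesssim h^{m_0}+\|v^n(0)-I_N\mathbf v^{n,0}\|_{H^1}\lesssim h^{m_0}+\|e_N^n\|_{H^1}+\varepsilon^2\|\dot e_N^n\|_{H^1},
\]
using \eqref{MTIIni1} and \eqref{Tri_err}. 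Since $\varepsilon^{-2}(\|e_N^n\|_{H^1}+\varepsilon^2\|\dot e_N^n\|_{H^1})^2\lesssim\mathcal E(e_N^n,\dot e_N^n)$, the $g$ and $h$ contributions give exactly $\tau^2\mathcal E(e_N^n,\dot e_N^n)+\tau^2h^{2m_0}/\varepsilon^2$.

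\emph{Step 3: the $f$ term (main obstacle).} Here $D_f$ compares $f(v^n(\tau),r^n(\tau),\tau)$ with $f(v^{n,1},r^{n,1},\tau)$, which are one-step outputs and not the data at $t_n$. Since $f$ is at least linear in $r$, I would write
\[
D_f=O(1)\cdot(r^n(\tau)-I_N\mathbf r^{n,1})+O(\|r\|)\cdot(v^n(\tau)-I_N\mathbf v^{n,1})+\text{interpolation error},
\]
where the $O(\|r\|)$ prefactor is $O(\varepsilon^2)$ by Lemma \ref{lemma4.1}. The differences at $s=\tau$ are controlled by comparing the Duhamel formulas \eqref{Sol1.v}–\eqref{Sol1.r} with \eqref{MTIvr2}. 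Using $|a_l|\lesssim1$ and the coefficient bounds from Step 1, the parts coming from the data difference are again $\lesssim\|e_N^n\|_{H^1}+\varepsilon^2\|\dot e_N^n\|_{H^1}+h^{m_0}$. For $r$ the corresponding factor is even smaller, of size $\varepsilon^2$ times the $v$-data difference, because $|p_l|\lesssim\varepsilon^2$ after the integration by parts used in Lemma \ref{lemma4.1}.

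The difficulty is the pure consistency parts $v^n(\tau)-(\text{scheme with exact data})$ and the corresponding one for $r$; these do not vanish. I would try to bound them by $O(\tau^2)$ for $v$ and $O(\varepsilon^2\tau)$ for $r$, using Lemma \ref{lemma4.1}. After multiplication by the $O(1)$ and $O(\varepsilon^2)$ prefactors and by $\tau^2/\varepsilon^2$, they contribute terms of the same size as the local truncation errors in Lemma \ref{lemma4.4}. If they cannot be absorbed into the right-hand side of \eqref{lm5result} directly, the fallback is to regroup them into $\xi^n$ (replacing $f^n(\tau)$ by its scheme-with-exact-data counterpart in \eqref{ldg}), so that $\eta^n$ contains only genuine stability differences. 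This bookkeeping of the $f$ term is the step I expect to require the most care.
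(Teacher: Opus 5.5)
Your Steps 1 and 2 follow the paper's proof, written out more explicitly. The paper uses $|c_l(\tau)|+|p_l(\tau)|\lesssim\tau$. It reduces $g(v^n(\cdot,0))-I_N\mathbf g^{n,0}$ and its $h$-analogue to $\|I_N\mathbf e_v^n\|_{H^1}+h^{m_0}$, using the interpolation error and \eqref{thm1err2}. It then uses $\|e_N^n\|_{H^1}^2+\varepsilon^4\|\dot e_N^n\|_{H^1}^2\le\varepsilon^2\mathcal E(e_N^n,\dot e_N^n)$. Your multiplier bounds $|b_l|\le(1+\varepsilon^2\mu_l^2)^{-1/2}$ and $|b_l'|\le\varepsilon^{-2}$ are what its one-word ``Similarly'' for $\partial_x\eta^n$ and $\dot\eta^n$ needs. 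The paper never treats the term $\frac{\tau}{2\varepsilon^2}\bigl[\widehat{(f^n)}_l(\tau)-\widetilde{(\mathbf f^{n,1})}_l\bigr]$ in $\widehat{\dot\eta^n_l}$. Your Step 3 targets exactly what that ``Similarly'' skips, and your suspicion is justified.

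The gap is that you leave Step 3 undecided, and the option of absorbing the consistency part into the right-hand side of \eqref{lm5result} is impossible. Take $n=0$, where $e_N^0,\dot e_N^0$ are only $P_N$-versus-$I_N$ differences of the initial data, and let $h\to0$ with $\tau,\varepsilon$ fixed. The right-hand side of \eqref{lm5result} tends to $0$. Meanwhile $\dot\eta^0\to\frac{\tau}{2\varepsilon^2}\bigl[f(v^0(\cdot,\tau),r^0(\cdot,\tau),\tau)-f(\tilde v,\tilde r,\tau)\bigr]$, where $(\tilde v,\tilde r)$ is the scheme's one-step output from exact data. This is a time-discretization defect, independent of $h$ and of the data error, and generically nonzero. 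So with $\dot\eta^n$ defined by \eqref{Nonerr2} and \eqref{ldg}, the inequality \eqref{lm5result} fails, and your fallback is the necessary repair. Put $f(\tilde v,\tilde r,\tau)$ in place of $f^n(\tau)$ in \eqref{ldg}. The defect then moves into $\dot\xi^n$, and $\dot\eta^n$ keeps only differences that are Lipschitz in the data error. For those, $|a_l|\le1$, $|c_l|,|p_l|\le\tau$ and $O(1)$ Lipschitz constants already give the right-hand side of \eqref{lm5result}.

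The regrouped defect must then fit under both bounds in \eqref{lm4lol}, and your proposed sizes do not achieve that. An $r$-defect of size $\varepsilon^2\tau$ with an $O(1)$ prefactor contributes $\frac{\tau^2}{\varepsilon^2}\cdot\varepsilon^4\tau^2=\varepsilon^2\tau^4$ to $\mathcal E$. This exceeds $\tau^6/\varepsilon^6$ when $\tau<\varepsilon^4$, and the energy argument would then give only $\varepsilon^2\tau$ instead of $\tau^2/\varepsilon^2$. Use instead the direct bound $\|v^n(\cdot,\tau)-\tilde v\|_{H^1}+\|r^n(\cdot,\tau)-\tilde r\|_{H^1}\lesssim\tau^2$. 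It follows from:
\begin{itemize}
\item $|b_l|\le1$ and $|\sin(\omega_l(\tau-\theta))|/(\varepsilon^2\omega_l)\le1$;
\item $\|\partial_s g^n\|_{H^1}+\|\partial_s h^n\|_{H^1}\lesssim1$;
\item $f^n(\cdot,0)=0$ and $\|\partial_s f^n\|_{H^1}\lesssim1$.
\end{itemize}
The defect then adds only $O(\tau^6/\varepsilon^2)$ to $\mathcal E(\xi^n,\dot\xi^n)$, which lies below both bounds in \eqref{lm4lol}, so Theorem~\ref{tm4.1} survives.

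Finally, your claim $|p_l(\tau)|\lesssim\varepsilon^2$ is false as a uniform multiplier bound. At a resonant mode $\omega_l=3/\varepsilon^2$ (i.e.\ $\mu_l^2=8/\varepsilon^2$) one has $|p_l(\tau)|\ge\tau/6-\varepsilon^2/18$. The integration by parts in Lemma~\ref{lemma4.1} gains $\varepsilon^2$ only at the price of a factor $\mu_l^2$, i.e.\ two derivatives, which you cannot spend on the data error. You do not need this claim, since $|p_l|\le\tau$ suffices.
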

\begin{proof}
 Define
\begin{subequations}\label{lm5.1}
\begin{align}
&e_{g}^n (x) = g(v^n(x,0)) - (I_N \mathbf{g}^{n,0})(x),  \quad
e_{h}^n (x) = h(v^n(x,0)) - (I_N \mathbf{h}^{n,0})(x),\\
& e_{v}^n (x) = v^n(x,0) - (I_N \mathbf{v}^{n,0})(x), \qquad x \in \overline{\Omega}.   
\end{align}
\end{subequations}
Denote
\begin{align*}
\mathbf{e}_g^n = ( e^n_{g,0},e^n_{g,1},\ldots,e^n_{g,N})^T, \
\mathbf{e}_h^n = ( e^n_{h,0},e^n_{h,1},\ldots,e^n_{h,N})^T,\ 
\mathbf{e}_v^n = ( e^n_{v,0},e^n_{v,1},\ldots,e^n_{v,N})^T,
\end{align*}
where for $j=0,1,\ldots, N$
\[
e^n_{g,j} =  g(v^n(x_j,0)) - g^{n,0}_j,\quad e^n_{h,j} =  h(v^n(x_j,0)) - h^{n,0}_j,\quad
e^n_{v,j} =  v^n(x_j,0) - v^{n,0}_j. 
\]
Noticing $ \left| c_l (\tau) \right| + \left| p_l (\tau) \right| \lesssim \tau $ for $ l = -\frac{N}{2},...,\frac{N}{2}-1$, we obtain from \eqref{lm3G1} and \eqref{lg} that 
\begin{align}
   \left\|  \eta^n \right\|_{H^1}^2  \lesssim & \tau^2 \left[ \left\|  P_N e_g^n \right\|_{H^1}^2 + \left\| P_N e_h^n \right\|_{H^1}^2 \right] \nonumber \\
    \lesssim & \tau^2 \left[ \left\|  I_N \mathbf{e}_g^n \right\|_{H^1}^2 + \left\| I_N \mathbf{e}_h^n \right\|_{H^1}^2 \right]  + \tau^2 h^{2 m_0}, \qquad \label{lm5.3}
\end{align}
where we apply
\begin{align*}
&\left\| P_N e_g^n \right\|_{H^1} \le \left\| P_N g(v^n(x,0)) - I_N g(v^n(x,0)) \right\|_{H^1} + \left\| I_N \mathbf {e}_g^n \right\|_{H^1} \lesssim \left\|  I_N \mathbf{e}_g^n \right\|_{H^1} + h^{m_0}, \\ & \left\| P_N e_h^n \right\|_{H^1} \le \left\| P_N h(v^n(x,0)) - I_N h(v^n(x,0)) \right\|_{H^1} + \left\| I_N \mathbf {e}_h^n \right\|_{H^1} \lesssim \left\|  I_N \mathbf{e}_h^n \right\|_{H^1} + h^{m_0},
\end{align*}
with $g(v^n(x,0))$, $h(v^n(x,0)) \in H^{m_0 + 1}(\Omega)$ by the definition \eqref{NLSWv1d(b)} and \eqref{gvnhvn} and the assumption (A).

By adopting the interpolation error \citep{bao2014uniformly} and the Sobolev inequality \citep{bao2014uniformly,evans2022partial}, we have
\begin{eqnarray}
   \left\| I_N \mathbf{e}_g^n \right\|_{H^1} \lesssim \left\| I_N \mathbf{e}_v^n \right\|_{H^1}, \quad  \left\| I_N \mathbf{e}_h^n \right\|_{H^1}
   \lesssim \left\| I_N \mathbf{e}_v^n \right\|_{H^1},  \label{lm5.4}
\end{eqnarray}
where we apply the controlled $l^{\infty}$ norm of the numerical solution under the assumption (A) and \eqref{thm1err2}. 

Thus plugging \eqref{lm5.4} into \eqref{lm5.3} and noting the initial data \eqref{NLSWv1d(b)} and \eqref{MTIIni1}, we can obtain
\begin{align*}
    \left\| \eta^n \right\|_{H^1}^2 & \lesssim \tau^2 \left\| I_N \mathbf{e}^n_v \right\|_{H^1}^2 + \tau^2 h^{2 m_0 }  \\
    & \lesssim \tau^2 \left[ \left\| I_N u(\cdot,t_n) - (I_N \mathbf{u}^n)(\cdot) \right\|_{H^1}^2 + \varepsilon^4 \left\| I_N \partial_t u(\cdot,t_n) - (I_N \dot{\mathbf{u}}^n)(\cdot) \right\|_{H^1}^2 + h^{2 m_0} \right]  \\
    & \lesssim  \tau^2 \left[  \left\| e_N^n \right\|_{H^1}^2 + \varepsilon^4 \left\| \dot{e}^n_N \right\|_{H^1}^2 +  h^{2 m_0} \right].
\end{align*}
Similarly, we have
\begin{align*}
    \left\| \partial_x \eta^n \right\|_{H^1}^2 + \varepsilon^2 \left\| \dot{\eta}^n \right\|_{H^1}^2 \lesssim \displaystyle \frac{\tau^2}{\varepsilon^2}  \left[  \left\| e_N^n \right\|_{H^1}^2 + \varepsilon^4 \left\| \dot{e}^n_N \right\|_{H^1}^2 +  h^{2 m_0} \right], 
\end{align*}
which completes the proof. 
\end{proof}

\subsection{Proof for Theorem~{\upshape\ref{tm4.1}} by the energy method}

\begin{proof}
For $n=0$, from the initial data \eqref{init00} and the assumption (A), we have
\begin{align*}
    \left\| e_N^0 \right\|_{H^1} + \varepsilon^2 \left\| \dot{e}_N^0 \right\|_{H^1} = \left\| P_N \phi_1 - I_N \phi_1 \right\|_{H^1} + \left\| P_N \phi_2 - I_N \phi_2 \right\|_{H^1} \lesssim h^{m_0 }.
\end{align*}
In addition, using the triangle inequality, we know that there exists a constant $h_1 > 0 $ independent of $\varepsilon$ such that for $0 < h \le h_1$ and $ \tau >0 $,
\begin{align*}
&\left\| I_N \mathbf{u}^0 \right\|_{H^1} \le  \left\| P_N \phi_1 \right\|_{H^1} + \left\| e_N^0 \right\|_{H^1} \le  C_0 + 1, \\
&\left\| I_N \dot{\mathbf{u}}^0 \right\|_{H^1} \le \displaystyle \frac{\left\| P_N \phi_2 \right\|_{H^1}}{\varepsilon^2} + \left\| \dot{e}_N^0 \right\|_{H^1} \le \displaystyle \frac{C_0 + 1}{\varepsilon^2}.
\end{align*}
Thus \eqref{thm1err1}-\eqref{thm1err2} hold for $n = 0$. 

Now we assume \eqref{thm1err1}-\eqref{thm1err2} with $e^n$ and $\dot{e}^n$ replaced 
by $e_N^n$ and $\dot{e}_N^n$, respectively, 
are valid for $n = 0,1,...,m$, and prove the case $n = m+1$. 
Using the Cauchy's inequality, we obtain from the error equations \eqref{errorequ1}-\eqref{errorequ2} that
\begin{align*}
    & \left| \widehat{(e^{n+1}_N )}_l \right|^2 \le (1 + \tau) \left| \mathrm{cos}(\omega_l \tau) \widehat{(e^n_N)}_l + \displaystyle \frac{\mathrm{sin}(\omega_l \tau)}{\omega_l}  \widehat{( \dot{e}^n_N)}_l \right|^2 + \displaystyle \frac{1 + \tau}{\tau}  \left| \widehat{\xi_l^n}- \widehat{\eta_l^n} \right|^2, \\ 
    & \left| \widehat{(\dot{e}^{n+1}_N)}_l  \right|^2 \le (1 + \tau) \left|  -\omega_l \mathrm{sin}(\omega_l \tau) \widehat{(e^n_N)}_l + \mathrm{cos}(\omega_l \tau) \widehat{(\dot{e}_N^n)}_l \right|^2 + \displaystyle \frac{1 + \tau}{\tau} \left| \widehat{\dot{\xi}_l^n}- \widehat{\dot{\eta}_l^n} \right|^2 . 
\end{align*}
Multiplying the above two equations by $(\mu_l^2 + \frac{1}{\varepsilon^2} )(1 + \mu_l^2)$ and $\varepsilon^2 ( 1 + \mu_l^2)$, respectively, and then summing them up for $l = -\frac{N}{2},...,\frac{N}{2}-1$, we get
\begin{align}
    \mathcal{E}(e_N^{n+1},\dot{e}_N^{n+1}) & \le (1 + \tau) \mathcal{E}(e_N^{n},\dot{e}_N^{n}) + \displaystyle \frac{1 + \tau}{\tau} \mathcal{E}(\xi^n - \eta^n,\dot{\xi}^n - \dot{\eta}^n) \nonumber \\
    & \le (1 + \tau) \mathcal{E}(e_N^{n},\dot{e}_N^{n}) + \displaystyle \frac{1 + \tau}{\tau} \left[ \mathcal{E}(\xi^n,\dot{\xi}^n ) + \mathcal{E}(\eta^n,\dot{\eta}^n ) \right].
    \label{thm1P2}
\end{align}
Plugging the results in Lemma \ref{lemma4.4} and Lemma \ref{lemma4.5} into \eqref{thm1P2}, we obtain two independent estimates
\begin{subequations}\label{thm1P3}
\begin{align}
    \mathcal{E}(e_N^{n+1},\dot{e}_N^{n+1}) - \mathcal{E}(e_N^{n},\dot{e}_N^{n}) & \lesssim   \tau \mathcal{E}(e_N^{n},\dot{e}_N^{n}) + \displaystyle \frac{\tau h^{2 m_0 }}{\varepsilon^2} + \displaystyle \frac{\tau^5}{\varepsilon^6}, \\
    \mathcal{E}(e_N^{n+1},\dot{e}_N^{n+1}) - \mathcal{E}(e_N^{n},\dot{e}_N^{n}) & \lesssim \tau \mathcal{E}(e_N^{n},\dot{e}_N^{n}) + \displaystyle \frac{\tau h^{2 m_0 }}{\varepsilon^2} + \tau \varepsilon^2 + \displaystyle \frac{\tau^3}{\varepsilon^2}.
\end{align}
\end{subequations}
Summing  \eqref{thm1P3} for $n = 0,...,m $ and applying the discrete Gronwall's inequality \citep{tao2006local}, we get
\begin{align*}
    \mathcal{E}(e_N^{m+1},\dot{e}_N^{m+1})  \lesssim \displaystyle \frac{\tau^4}{\varepsilon^6} + \frac{h^{2 m_0 }}{\varepsilon^2}, \qquad 
    \mathcal{E}(e_N^{m+1},\dot{e}_N^{m+1})  \lesssim \displaystyle \frac{\tau^2}{\varepsilon^2} + \varepsilon^2 +  \frac{h^{2 m_0 }}{\varepsilon^2},
\end{align*}
which implies by the definition \eqref{enefun}
\begin{equation}
\begin{aligned}
   & \left\| e^{m+1}_N \right\|_{H^1} + \varepsilon^2 \left\| \dot{e}^{m+1}_N \right\|_{H^1} \lesssim h^{m_0 } + \displaystyle \frac{\tau^2}{\varepsilon^2}, \\
   & \left\| e^{m+1}_N \right\|_{H^1} + \varepsilon^2 \left\| \dot{e}^{m+1}_N \right\|_{H^1} \lesssim h^{m_0 } + \tau + \varepsilon^2.
    \end{aligned}\label{est_eN}
\end{equation}
Combining \eqref{est_eN} and \eqref{Tri_err}, we prove \eqref{thm1err1} for $ n = m + 1$.

\noindent Thus by taking the minimum of \eqref{thm1err1}, we obtain a uniform error bound with respect to $\varepsilon\in(0,1]$
\begin{align*}
    \left\| e^{m+1} \right\|_{H^1} + \varepsilon^2 \left\| \dot{e}^{m+1} \right\|_{H^1} \lesssim h^{m_0} + \tau.
\end{align*}

\noindent As for \eqref{thm1err2}, similarly by the triangle inequality, there exist constants $\tau_2>0$ and $h_2 > 0$ independent of $\varepsilon$ such that for $0 < \tau \le \tau_2$ and $0 < h \le h_2$
\begin{align*}
    & \left\| I_N \mathbf{u}^{m + 1} \right\|_{H^1} \le \left\|  u(t_{m+1}) \right\|_{H^1} + \left\| e^{m+1} \right\|_{H^1} \le C_0 + 1, \\
   & \left\| I_N\dot{\mathbf{u}}^{m + 1} \right\|_{H^1} \le \left\|  \partial_t u(t_{m+1}) \right\|_{H^1} + \left\| \dot{e}^{m+1} \right\|_{H^1} \le \displaystyle \frac{C_0 + 1}{\varepsilon^2}.
\end{align*}

\noindent Thus \eqref{thm1err2} also holds for $n = m + 1$ and the proof is completed by taking $\tau_0 = \min \left\{\tau_1, \tau_2 \right\}$ and $h_0 = \min \left\{ h_1,h_2 \right\}$. 
\end{proof}

\begin{remark}
If the initial data $\phi_1(x)$ and $\phi_2 (x)$ are complex-valued functions, then the solution of the NKGE \eqref{1.1} is complex-valued. In this case, the multiscale decomposition  over the time interval $[t_n,t_{n+1}]$ becomes \citep{bao2014uniformly,machihara2002nonrelativistic,tsutsumi1984nonrelativistic}
\begin{align*}
    u(x,t_n+s) = e^{is / \varepsilon^2} v^n_+(x,s) + e^{-is/\varepsilon^2} \overline{v^n_-(x,s)} + r^n(x,s), \quad 0 \le s \le \tau.
\end{align*}
Then the construction of decomposed equations and the corresponding numerical discretizations proposed
in this paper can be extended 
easily for this case \citep{bao2014uniformly}.
\end{remark}

\begin{remark}
Compared to the MTI-FP method proposed in \citep{bao2014uniformly}, the homogeneous transmission condition $\partial_s v^n(x,0) = 0$ brings two advantages for the MTI-FP method in this work: (i) it relaxes the regularity requirement $u \in C^1([0,T];H^{m_0 + 2})$ in \citep{bao2014uniformly} to $u \in C^1([0,T];H^{m_0 + 1})$, 
and (ii) it helps the simplified MTI-FP method \eqref{MTIu}-\eqref{init00} achieve the optimal spatial accuracy uniformly for $0 < \varepsilon \le 1$.
\end{remark}

\begin{remark}
  When $d = 1$, i.e. one-dimensional case, Theorem \ref{tm4.1} holds without any CFL-conditions. However, for $d = 2$ or $d = 3$, due to the use of inverse inequalities \citep{bao2014uniformly,bao2012analysis,shen2011spectral} \ to control the $l^{\infty}$ norm of the numerical solution, we have to impose the technical condition
  \begin{align*}
      \tau \lesssim \rho_d(h), \quad \mathrm{with} \quad\rho_d(h) = 
      \begin{cases}
          1 / \left| \mathrm{ln} h\right|, \quad & d = 2,\\
          \sqrt{h}, \quad & d = 3.
      \end{cases}
  \end{align*}
Furthermore, if under a stronger assumption (B) of the regularity of $u(x,t)$, i.e. for $m_0 \ge 4$
\begin{align*}
 \text{(B)} \quad  u \in C^1 \left([0,T]; H^{m_0+2}_p (\Omega) \right), \ \left\| u \right\|_{L^{\infty} \left( [0,T]; H^{m_0 +2} \right)} + \varepsilon^2 \left\| \partial_t u \right\|_{L^{\infty} \left( [0,T]; H^{m_0 +2} \right)} \lesssim 1, \nonumber 
\end{align*}
we can derive error estimates in the $H^2$-norm and all of the above analysis could be extended easily. In this case, by using the following Sobolev inequalities \citep{evans2022partial,bao2014uniformly}, no CFL-conditions are needed for $d =1, 2, 3$,
\begin{align*}
    \left\| u \right\|_{L^{\infty}(\Omega)} \le C \left\| u \right\|_{H^2 (\Omega)},\quad 
    \left\| u \right\|_{W^{1,p}(\Omega)} \le C \left\| u \right\|_{H^2 (\Omega)}, \quad 1 < p < 6,
\end{align*}
where $\Omega$ is a bounded domain in $d$-dimensional space $(d = 1 , 2, 3)$.

\end{remark}

\begin{remark}
As established in Section~\ref{sec2}, the transmission condition could be further extended to a generalized function $\gamma(x)$ satisfying certain regularity requirement
\begin{align}
    \partial_s v^n(x,0) = \gamma(x),\quad  \text{with} \quad \gamma(x) \in H^{m_0 + 1}(\Omega).\label{general}
\end{align}
Under the assumption (A), the modified MTI-FP method with \eqref{general} obtains the same error bounds \eqref{thm1err1}-\eqref{thm1err3} in Theorem \ref{tm4.1} and all of the above analysis follows in a similar way. Note that if $\gamma(x) \in H^{m_0}(\Omega)$, the MTI-FP method would face one order reduction in spatial accuracy, i.e.
\begin{align*}
    \left\| e^n \right\|_{H^1} + \varepsilon^2 \left\| \dot{e}^n \right\|_{H^1}  \lesssim h^{m_0 - 1} + \tau, \quad 0 \le n \le \displaystyle \frac{T}{\tau}.
\end{align*}
\end{remark}

\section{A multiscale interpolation in time and its uniformly accurate error bounds}\label{sec5}
In this section, we present a multiscale interpolation in time based on the multiscale decomposition 
\eqref{ansatz1D} and the numerical results obtained via the MTI-FP \eqref{MTIu}-\eqref{init00}.  

\begin{figure}[ht!]
\centering
\includegraphics[width=0.54\textwidth]{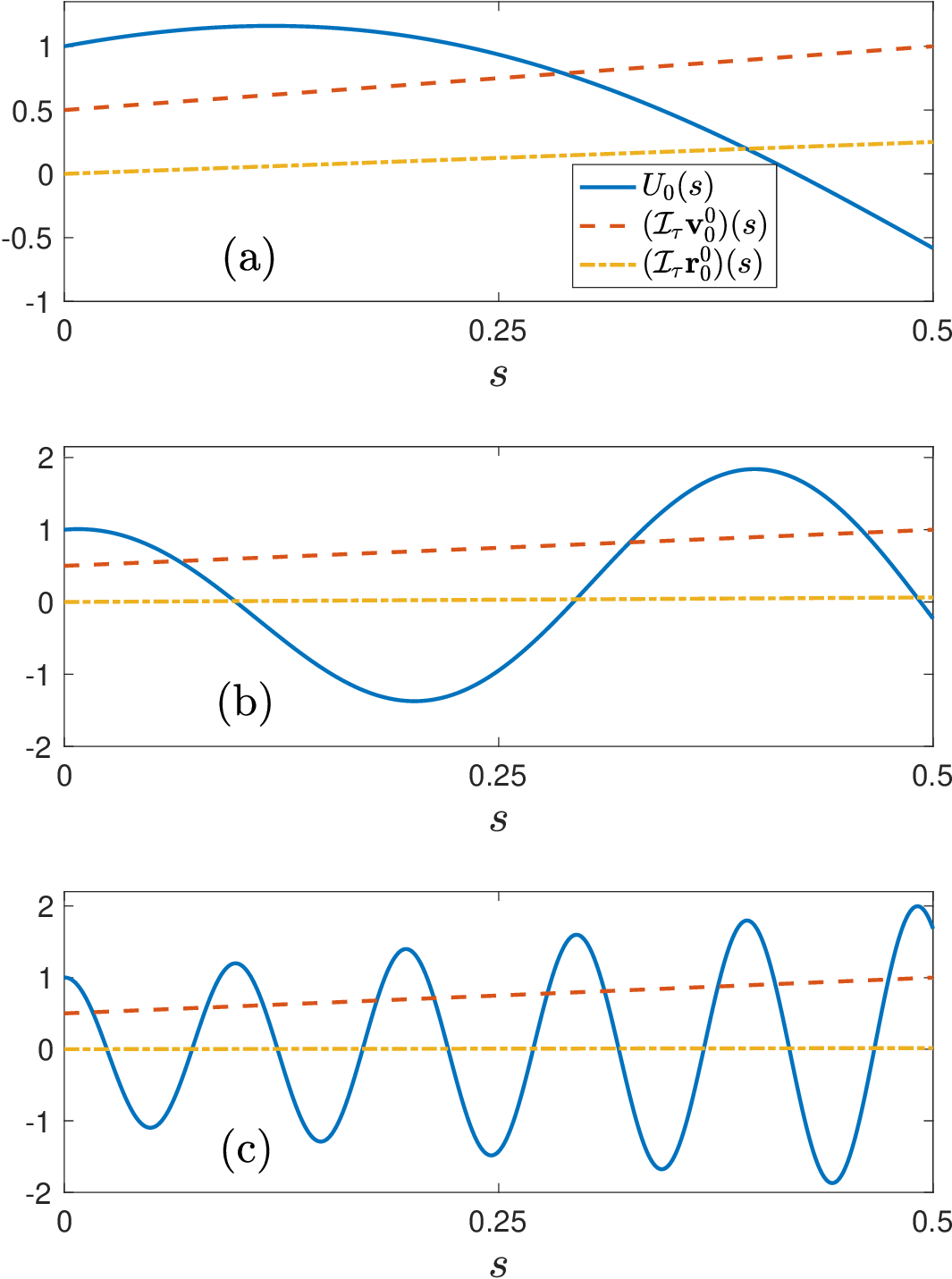}
\caption{Illustration of the multiscale interpolation \eqref{MTIint2} with $\tau=0.5$,
$v_{0}^{0,0}=0.5$, $v_{0}^{0,1}=1.0$ and $r_{0}^{0,1}=\varepsilon^2$ for different $\varepsilon$:
(a) $\varepsilon=0.5$, (b) $\varepsilon=0.25$, and (c) $\varepsilon=0.125$.}
\label{FigureInterpolation}
\end{figure}

\subsection{The multiscale interpolation in time}

Let $\mathcal{I}_\tau: C([0,\tau]) ({\rm or}\ {\mathbb C}^2) \rightarrow W:={\rm span}\{1, s\}$ be the linear 
interpolation operator, i.e.,
\begin{align}
&(\mathcal{I}_\tau \mathbf{w})(s) =\frac{\tau - s}{\tau} w_0 +\frac{s}{\tau}w_1, \quad  0\le s\le \tau, \qquad 
\mathbf{w}=(w_0,w_1)^T\in {\mathbb C}^2,\\ 
&(\mathcal{I}_\tau w)(s) =\frac{\tau - s}{\tau} w(0) +\frac{s}{\tau}w(\tau), \quad  0\le s\le \tau,  \qquad 
w\in  C([0,\tau]).
\end{align}
Based on the numerical solutions from the MTI-FP \eqref{MTIu}-\eqref{init00}, denote
\[\mathbf{v}_j^n=(v^{n,0}_j,v^{n,1}_j)^T\in{\mathbb C}^2, \quad  \mathbf{r}_j^n=(0,r^{n,1}_j)^T\in{\mathbb C}^2,
\quad j=0,1,\ldots,N, \quad n\ge0,\]
then we can present a multiscale interpolation in time for $t\ge0$ as
\begin{equation}
\mathbf{U}(t) = (U_0(t),...,U_N(t))^T,  \quad t \ge 0, \label{MTIint1}
\end{equation}
where for $j=0,1,\ldots,N$
\begin{align}
     U_j(t_n+s)&=e^{is/\varepsilon^2} (\mathcal{I}_\tau\mathbf{v}_j^n)(s)+
     {\rm c.c.} + (\mathcal{I}_\tau\mathbf{r}_j^n)(s), \quad 0 \le s \le \tau, \quad n\ge0,\label{MTIint2}
\end{align}
with
\[ (\mathcal{I}_\tau\mathbf{v}_j^n)(s)=\frac{\tau-s}{\tau}v_j^{n,0}+\frac{s}{\tau}v_j^{n,1},\qquad 
(\mathcal{I}_\tau\mathbf{r}_j^n)(s)=\frac{s}{\tau} r_j^{n,1}, \qquad 0\le s\le \tau.
\]

To illustrate the above multiscale interpolation \eqref{MTIint2} clearly, Figure~\ref{FigureInterpolation}
plots $U_{0}(s)$, $(\mathcal{I}_\tau\mathbf{v}_{0}^0)(s)$ and $(\mathcal{I}_\tau\mathbf{r}_{0}^0)(s)$ with $\tau=0.5$,
$v_{0}^{0,0}=0.5$, $v_{0}^{0,1}=1.0$ and $r_{0}^{0,1}=\varepsilon^2$ for different $\varepsilon$.

\subsection{A uniformly accurate error bound}

For the multiscale interpolation \eqref{MTIint2}, we have the following error bounds.

\begin{theorem}\label{tm2} Under the assumption (A) and $\tau_0$, $h_0$ independent of $\varepsilon$ obtained in Theorem~\ref{tm4.1}, for any $0 < \varepsilon \le 1$, when $0 < h \le h_0$ and $0 < \tau \le \tau_0$, we have for $0 \le t \le T$
\begin{equation}\label{thm2err1}
\begin{aligned}
   & \left\|  u(\cdot,t) - (I_N\mathbf{U})(\cdot,t) \right\|_{H^1}  \lesssim h^{m_0} + \displaystyle \frac{\tau^2}{\varepsilon^2}, \\
   & \left\|  u(\cdot,t) - (I_N\mathbf{U})(\cdot,t)  \right\|_{H^1}  \lesssim h^{m_0} + \tau + \varepsilon^2. 
    \end{aligned}
\end{equation}
Thus, by first taking the minimum of the two error estimates in \eqref{thm2err1} and then taking the maximum for $\varepsilon \in (0,1]$, we obtain a uniform error estimate with respect to  $\varepsilon \in (0,1]$ as
\begin{align}
\left\|  u(\cdot,t) - (I_N\mathbf{U})(\cdot,t) \right\|_{H^1} &  \lesssim h^{m_0} +\max_{0<\varepsilon\le 1}\min\left  \{\frac{\tau^2}{\varepsilon^2}, \varepsilon^2\right\} \nonumber\\
& \lesssim h^{m_0} + \tau,\qquad  0 \le t \le T.  \label{thm2err2}
\end{align}
\end{theorem}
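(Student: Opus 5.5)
Fix $t\in[0,T]$ and write $t=t_n+s$ with $0\le s\le\tau$. The plan is to compare $(I_N\mathbf{U})(\cdot,t)$ with the exact multiscale decomposition \eqref{ansatz1D} on $[t_n,t_{n+1}]$. By the triangle inequality we have
\[
u(\cdot,t)-(I_N\mathbf U)(\cdot,t)=\Big[e^{is/\varepsilon^2}\big(v^n(\cdot,s)-\mathcal I_\tau v^n(s)\big)+{\rm c.c.}+r^n(\cdot,s)-\mathcal I_\tau r^n(s)\Big]+\Big[e^{is/\varepsilon^2}\big(\mathcal I_\tau v^n(s)-I_N\mathcal I_\tau\mathbf v^n(s)\big)+{\rm c.c.}+\mathcal I_\tau r^n(s)-I_N\mathcal I_\tau\mathbf r^n(s)\Big].
\]
Here $v^n,r^n$ solve \eqref{NLSWv1d}--\eqref{NKGEr1d} with the exact data at $t_n$, and $\mathcal I_\tau$ acts in $s$. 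The factor $|e^{\pm is/\varepsilon^2}|=1$ is harmless in $H^1$ since it does not depend on $x$. So the task reduces to two pieces: (I) the interpolation error of the micro variables, and (II) the nodal errors $v^n(\cdot,0)-I_N\mathbf v^{n,0}$, $v^n(\cdot,\tau)-I_N\mathbf v^{n,1}$ and $r^n(\cdot,\tau)-I_N\mathbf r^{n,1}$. The reason for (II) is that $\mathcal I_\tau$ is a convex combination, so
\[
\|\mathcal I_\tau w-\mathcal I_\tau\mathbf w\|_{H^1}\le\max\big(\|w(0)-w_0\|_{H^1},\|w(\tau)-w_1\|_{H^1}\big).
\]

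For (I), I would bound the linear-interpolation error in two ways, using Lemma~\ref{lemma4.1}. The first way is second order: $\|v^n-\mathcal I_\tau v^n\|_{H^1}\lesssim\tau^2\|\partial_{ss}v^n\|_{L^\infty H^1}\lesssim\tau^2/\varepsilon^2$. For $r^n$, the bound $\varepsilon^4\|\partial_{ss}r^n\|_{H^{m_0-3}}\lesssim\varepsilon^2$ gives $\tau^2\|\partial_{ss}r^n\|_{H^1}\lesssim\tau^2/\varepsilon^2$, which uses $m_0\ge4$. The second way is first order plus size: $\|v^n-\mathcal I_\tau v^n\|_{H^1}\lesssim\tau\|\partial_sv^n\|_{L^\infty H^1}\lesssim\tau$, and $\|r^n-\mathcal I_\tau r^n\|_{H^1}\le2\|r^n\|_{L^\infty H^1}\lesssim\varepsilon^2$. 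Since $r^n(0)=0$, this is consistent with $\mathbf r_j^n=(0,r_j^{n,1})$.

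For (II), the node at $s=0$ satisfies $v^n(\cdot,0)-I_N\mathbf v^{n,0}=\tfrac12(e^n-i\varepsilon^2\dot e^n)$ up to projection/interpolation errors of size $h^{m_0}$. By Theorem~\ref{tm4.1} this is bounded by both right-hand sides of \eqref{thm1err1}. For the node at $s=\tau$, I would not redo a local analysis. Instead I would estimate $v^n(\cdot,\tau)-I_N\mathbf v^{n,1}$ as the sum of a propagated initial error and a local truncation error. The propagated part is at most $\lesssim\|e^n\|_{H^1}+\varepsilon^2\|\dot e^n\|_{H^1}$, using boundedness of $a(\tau)$, $|c_l|\lesssim\tau$, and the Lipschitz estimate \eqref{lm5.4} under \eqref{thm1err2}. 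The local part is the $v$-truncation error, which is $O(\tau^2/\varepsilon^2)$ from the Taylor remainder with $\partial_{ss}g^n$, or $O(\tau^2)$ from the first-order version, as in Lemma~\ref{lemma4.4}. The $r$-node is handled similarly: $r^n(\tau)-I_N\mathbf r^{n,1}$ is $\lesssim\min\{\tau^2/\varepsilon^2,\varepsilon^2\}$ plus $O(\tau)$ times the initial error plus $h^{m_0}$. The second bound holds because both $r^n(\tau)$ and $\mathbf r^{n,1}$ are $O(\varepsilon^2)$, the latter since $|p_l(\tau)|\lesssim\varepsilon^2$ by the integration by parts of Lemma~\ref{lemma4.1}. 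Spatial errors are handled by $\|w-I_Nw\|_{H^1}\lesssim h^{m_0}$ via assumption (A).

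Adding up gives \eqref{thm2err1}. Then \eqref{thm2err2} follows as in Theorem~\ref{tm4.1}, using $\min\{\tau^2/\varepsilon^2,\varepsilon^2\}\le\tau$. The main obstacle is (II) at $s=\tau$. The micro-variable errors are not directly controlled by Theorem~\ref{tm4.1}, which only bounds the recombined $u$. I would therefore prove a small auxiliary lemma giving the local error of the $v$- and $r$-updates separately, reusing the estimates \eqref{est_fn}--\eqref{ddfn} and the coefficient bounds $|a_l|\lesssim1$, $|c_l|\lesssim\tau$, $|p_l|\lesssim\min\{\tau,\varepsilon^2\}$.
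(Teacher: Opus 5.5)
Your plan follows the paper's proof. It uses the same split into the linear-interpolation error of the exact micro variables $v^n,r^n$ (bounded via Lemma~\ref{lemma4.1}) plus nodal errors. The node $s=0$ comes from Theorem~\ref{tm4.1}; in fact $v^n(\cdot,0)-I_N\mathbf v^{n,0}=\frac12(e^n-i\varepsilon^2\dot e^n)$ exactly. The node $s=\tau$ is handled by one step of propagation ($|a_l|\le1$, $|c_l|\lesssim\tau$, \eqref{lm5.4}) plus a local truncation error as in Lemma~\ref{lemma4.4}. Using $2\|r^n\|_{L^\infty H^1}\lesssim\varepsilon^2$ instead of the paper's $\tau$ for the first-order $r$-interpolation bound is fine. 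One point in the spatial step needs justification. For $w=r^n(\tau)$, Lemma~\ref{lemma4.1} only gives $H^{m_0-1}$, so you need $r^n=u-e^{is/\varepsilon^2}v^n-{\rm c.c.}\in H^{m_0+1}$, which follows from (A) and \eqref{lm1v}. The paper avoids this by projecting $u$ itself.

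One claim is false as stated: $|p_l(\tau)|\lesssim\varepsilon^2$, and hence $|p_l|\lesssim\min\{\tau,\varepsilon^2\}$, uniformly in $l$. Expanding the sine, $p_l(\tau)$ contains the term $\frac{e^{i\omega_l\tau}}{2i\varepsilon^2\omega_l}\int_0^\tau e^{i\theta(3/\varepsilon^2-\omega_l)}\,d\theta$. Here $3/\varepsilon^2-\omega_l=(3-\sqrt{1+\varepsilon^2\mu_l^2})/\varepsilon^2$ vanishes at $\varepsilon^2\mu_l^2=8$, which lies in the discrete mode range as soon as $h\lesssim\varepsilon$. At or sufficiently near this resonance the integral is $\approx\tau$, so $|p_l(\tau)|\approx\tau/6\gg\varepsilon^2$ when $\tau\gg\varepsilon^2$. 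The integration by parts of Lemma~\ref{lemma4.1} actually gives only $|p_l(\tau)|\lesssim\varepsilon^2(1+\tau\mu_l^2)$. Turning that into $\|I_N\mathbf r^{n,1}\|_{H^1}\lesssim\varepsilon^2$ would need an $H^3$ bound on $I_N\mathbf h^{n,0}$, which \eqref{thm1err2} does not provide.

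You do not need this branch, and the repair is easy. Since $1/(\varepsilon^2\omega_l)\le1$, you have $|p_l(\tau)|\le\tau$. Hence $\|I_N\mathbf r^{n,1}\|_{H^1}\lesssim\tau$ and $\|r^n(\tau)-I_N\mathbf r^{n,1}\|_{H^1}\lesssim h^{m_0}+\tau+\varepsilon^2$, which is what \eqref{thm2P9} asserts. A sharper route: $f^n(0)=0$, and $\|\partial_sh^n\|_{H^1}+\|\partial_sf^n\|_{H^1}\lesssim1$ by \eqref{est_others}. So the local $r$-error is $O(\tau^2)$, and the propagated part is $O(\tau)$ times the node-$0$ error. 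This gives both bounds of \eqref{thm2err1} for the $r$-node at once.
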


\begin{proof}
Using the triangle inequality, we get
\begin{align}
\left\|  u(\cdot,t) -(I_N\mathbf{U})(\cdot,t)\right\|_{H^1}
&\lesssim\left\| u(\cdot,t) -(P_N u)(\cdot,t)\right\|_{H^1}+\left\| P_N u(\cdot,t) - (I_N\mathbf{U})(\cdot,t) 
\right\|_{H^1}\nonumber \\
&\lesssim h^{m_0} + \left\| (P_N u)(\cdot,t) - (I_N\mathbf{U})(\cdot,t) \right\|_{H^1}, \quad 0\le t \le T.
\end{align}
For $n\ge0$ and $x\in\overline{\Omega}$, define
\begin{equation}\label{thm2P2.3}
u_N^\tau(x,t_n+s):= e^{is / \varepsilon^2} (\mathcal{I}_\tau (P_Nv^n))(x,s)+ {\rm c.c.} 
+ (\mathcal{I}_\tau (P_Nr^n))(x,s), \quad  0\le s\le \tau, 
\end{equation}
then for $t\in[t_n,t_{n+1}]$ (or $s\in[0,\tau]$), noting \eqref{thm2P2.3}, \eqref{MTIint1} and \eqref{MTIint2}, we have
\begin{align}
& \left\| (P_N u)(\cdot,t_n+s) - (I_N\mathbf{U})(\cdot,t_n+s) \right\|_{H^1}\nonumber\\
& \lesssim \left\| (P_N u)(\cdot,t_n+s) -u_N^\tau(\cdot,t_n+s) \right\|_{H^1}+
\left\| u_N^\tau(\cdot,t_n+s) - (I_N\mathbf{U})(\cdot,t_n+s) \right\|_{H^1}\nonumber\\
& \lesssim \left\|  v^n(\cdot,s) - (\mathcal{I}_\tau v^n)(\cdot,s) \right\|_{H^1} + 
\left\|  r^n(\cdot,s) - (\mathcal{I}_\tau r^n)(\cdot,s) \right\|_{H^1}\nonumber\\
& +\left\|(P_Nv^n)(\cdot,0)-(I_N\mathbf{v}^{n,0})(\cdot)\right\|_{H^1}+
\left\|(P_Nv^n)(\cdot,\tau)-(I_N\mathbf{v}^{n,1})(\cdot)\right\|_{H^1}\nonumber\\
& +\left\|(P_Nr^n)(\cdot,\tau)-(I_N\mathbf{r}^{n,1})(\cdot)\right\|_{H^1}.
\label{thm2P10}
\end{align}
Noting the prior estimates \eqref{lm1v}, we obtain for $0\le s\le \tau$
\begin{subequations}\label{thm2P3}
\begin{align}
&\left\| v^n(\cdot, s) - (\mathcal{I}_\tau v^n))(\cdot,s)\right\|_{H^1} \lesssim \tau \left\| \partial_s v^n \right\|_{L^{\infty}([0,\tau];H^1)} \lesssim \tau,  \label{thm2P3.1} \\
& \left\| v^n(\cdot, s) - (\mathcal{I}_\tau v^n)(\cdot,s) \right\|_{H^1} \lesssim \tau^2 \left\| \partial_{ss} v^n \right\|_{L^{\infty}([0,\tau];H^1)} \lesssim \displaystyle \frac{\tau^2}{\varepsilon^2}. \label{thm2P3.2}
\end{align}
\end{subequations}
Similarly, we have 
\begin{equation}
\begin{aligned}
& \left\| r^n(\cdot, s) - (\mathcal{I}_\tau r^n)(\cdot,s) \right\|_{H^1} \lesssim \tau, \\
& \left\| r^n(\cdot, s) -(\mathcal{I}_\tau r^n)(\cdot,s) \right\|_{H^1} \lesssim \displaystyle \frac{\tau^2}{\varepsilon^2}, \quad 0 \le s \le \tau. 
\end{aligned}\label{thm2P4}
\end{equation}
From the initial data \eqref{NLSWv1d(b)}, \eqref{MTIIni1} and the error estimate \eqref{thm1err1}, we immediately obtain
\begin{subequations}\label{Pnvn0}
\begin{align}
&\left\|(P_Nv^n)(\cdot,0)-(I_N\mathbf{v}^{n,0})(\cdot)\right\|_{H^1}\lesssim h^{m_0}+\tau+\varepsilon^2,\\
&\left\|(P_Nv^n)(\cdot,0)-(I_N\mathbf{v}^{n,0})(\cdot)\right\|_{H^1}\lesssim h^{m_0}+\frac{\tau^2}{\varepsilon^2}.
\end{align}
\end{subequations}
Subtracting the Fourier coefficients $\widehat{(v^n)}_l(\tau)$ in \eqref{Sol1.v} from $\widetilde{(\mathbf{v}^{n,1})}_l$ in \eqref{MTIvr2.v}, noting \eqref{thm1err1}, and using  $|a_l(\tau)| \lesssim1$ and $|c_l (\tau)| \lesssim \tau$, we get
\begin{align}
\left\| (P_N v^n)(\cdot,\tau) - (I_N \mathbf{v}^{n,1}) (\cdot) \right\|_{H^1} \lesssim &  \left\| (P_N v^n )(\cdot,0) - ( I_N\mathbf{v}^{n,0}) (\cdot) \right\|_{H^1} \nonumber \\
& + \tau \left( \left\| g^n \right\|_{L^{\infty}([0,\tau];H^1)} + \left\|  I_N \mathbf{g}^{n,0} \right\|_{H^1} \right) \nonumber \\
\lesssim & \left\| e_N^n \right\|_{H^1} + \varepsilon^2 \left\| \dot{e}_N^n \right\|_{H^1} + \tau
 \lesssim   h^{m_0} + \tau + \varepsilon^2.  \label{thm2P6}
\end{align}
On the other hand, similar to estimates of the local truncation errors in Lemma~\ref{lemma4.4}, applying Taylor's expansion and the error bounds \eqref{thm1err1}, we have
\begin{align}
& \left\| (P_N v^n)(\cdot,\tau) - (I_N \mathbf{v}^{n,1}) (\cdot) \right\|_{H^1} \nonumber \\
&\lesssim  \left\| (P_N v^n )(\cdot,0) - ( I_N\mathbf{v}^{n,0}) (\cdot) \right\|_{H^1} + \tau^2  \left\| \partial_s g^n \right\|_{L^{\infty}([0,\tau];H^1)}  \nonumber\\
&\lesssim  h^{m_0} + \tau^2/\varepsilon^2.\label{thm2P7}
\end{align}
Similarly, we can obtain the following estimates
\begin{equation}\label{thm2P9}
\begin{aligned}
&\left\| (P_N r^n)(\cdot,\tau) - (I_N \mathbf{r}^{n,1}) (\cdot) \right\|_{H^1}\lesssim   h^{m_0} + \tau + \varepsilon^2,\\
&\left\| (P_N r^n)(\cdot,\tau) - (I_N \mathbf{r}^{n,1}) (\cdot) \right\|_{H^1}\lesssim
h^{m_0} + \frac{\tau^2}{\varepsilon^2}.
\end{aligned}
\end{equation}
Plugging \eqref{thm2P3}-\eqref{thm2P4}, \eqref{Pnvn0}, \eqref{thm2P6}-\eqref{thm2P7} and \eqref{thm2P9}
into \eqref{thm2P10} and \eqref{thm2P2.3}, we obtain the error estimates in \eqref{thm2err1} and 
\eqref{thm2err2}. The proof is completed.
\end{proof}


\section{Numerical results} \label{sec6}
In this section, we report numerical results to verify our error bounds and to demonstrate their sharpness as well as 
to apply the MTI-FP method for numerically studying convergent rates of the NKGE \eqref{1.1} to its different limiting models.

\subsection{Accuracy test}
We take $d = 1$, $\lambda=1$ and 
\begin{align}
\phi_1(x) = \frac{1}{2}\text{sech}(x^2), \quad \phi_2(x)  = \frac{1}{2} e^{-x^2}, \quad x \in {\mathbb R}, \label{5.1}
\end{align}
in \eqref{1.1}, and $\Omega = (-16,16)$ in \eqref{NKGE1d}, which is large enough such that the truncation error is negligible.

Denote 
\begin{align*}
e^{\varepsilon}_{\tau,h}(t_n) = \left\| u(\cdot,t_n) - I_N \mathbf{u}^{n} \right\|_{H^1}, \quad e^{*}_{\tau,h} (t_n) = \max_{0 < \varepsilon \le 1} \left\{ e^{\varepsilon}_{\tau,h} (t_n) \right\},\quad n\ge0.
\end{align*}

\begin{table}[htbp]
\centering
\caption{\textit{Spatial errors with $\tau = 1 \times 10^{-6}$ for different $\varepsilon$ and $h$}}
\label{t1}
\begin{tabular}{@{}lllll@{}}
\toprule
{$e^{\varepsilon}_{\tau,h}(1)$}\quad\quad \quad \quad \quad &
{$h_0 = 1$} \quad\quad\quad\quad\quad &
{$ h_0 / 2$} \quad\quad\quad\quad\quad &
{$h_0 / 2^2$} \quad\quad\quad\quad\quad &
{$h_0 / 2^3$}\quad \quad \quad \\
\midrule
{$\varepsilon_0=0.5$} & 1.63E-01 &  9.82E-03  & 2.94E-05  & 2.16E-09  \\
{$\varepsilon_0/2^1$} & 1.45E-01 &  1.55E-02  & 6.02E-05 &  7.28E-09 \\
{$\varepsilon_0/2^2$} & 7.72E-02  &  4.26E-03  & 1.89E-05  &  3.70E-09 \\
{$\varepsilon_0 / 2^3$} & 1.48E-01  &  1.26E-02   & 9.49E-05  &  5.02E-09 \\
{$\varepsilon_0 / 2^4$} & 1.09E-01 &  1.19E-02   & 7.63E-05  &  7.24E-09 \\
{$\varepsilon_0 / 2^5$} & 1.59E-01  &  9.71E-03  & 7.54E-05  &  6.70E-09 \\
{$\varepsilon_0 / 2^7$} & 1.67E-01  &  9.15E-03   & 4.89E-05  &  1.49E-08 \\
{$\varepsilon_0 / 2^{9}$} & 3.85E-02  &  1.38E-02 &  8.05E-05  &  1.57E-07 \\
{$\varepsilon_0 / 2^{11}$} & 1.73E-01  & 8.26E-03  &  5.14E-05 &  1.26E-07 \\
{$\varepsilon_0 / 2^{13}$} & 1.44E-01  & 8.21E-03  & 8.75E-05  & 1.30E-07 \\
\bottomrule
\end{tabular}
\end{table}

\begin{table}[htbp]
\centering
\caption{\textit{Temporal errors with $h = 1/32$ for different $\varepsilon$ and $\tau$}}
\label{t2}
\begin{tabular}{@{}llllllll@{}}
\toprule
{$e^{\varepsilon}_{\tau,h}$(1)}\quad\quad &
{$\tau_0=0.2$} \quad\quad&
{$\tau_0/2^2$} \quad\quad&
{$\tau_0/2^4$} \quad\quad&
{$\tau_0/2^6$} \quad\quad&
{$\tau_0/2^8$} \quad\quad&
{$\tau_0/2^{10}$} \quad\quad&
{$\tau_0/2^{12}$} \\
\midrule
{$\varepsilon_0=0.5$} & 1.54E-02 & 9.70E-04 & 6.01E-05 & 3.74E-06 & 2.34E-07 & 1.47E-08 & 2.35E-09 \\[1pt]
rate & {---} & 1.99 & 2.01 & 2.00 & 2.00 & 1.99 & 1.33 \\
\midrule
{$\varepsilon_0/2^1$} & 1.45E-02 & 4.02E-03 & 2.64E-04 & 1.64E-05 & 1.03E-06 & 6.44E-08 & 5.20E-09 \\[1pt]
rate & {---} & 0.92 & 1.96 & 2.00 & 2.00 & 2.00 & 1.82 \\
\midrule
{$\varepsilon_0/2^2$} & 2.50E-02 & 6.80E-03 & 1.25E-03 & 8.04E-05 & 4.99E-06 & 3.11E-07 & 1.93E-08 \\[1pt]
rate & {---} & 0.94 & 1.22 & 1.98 & 2.00 & 2.00 & 2.00 \\
\midrule
{$\varepsilon_0 / 2^3$} & 2.67E-02 & 6.83E-03 & 2.01E-03 & 3.17E-04 & 2.04E-05 & 1.27E-06 & 7.98E-08 \\[1pt]
rate & {---} & 0.98 & 0.88 & 1.33 & 1.98 & 2.00 & 2.00 \\
\midrule
{$\varepsilon_0 / 2^4$} & 2.44E-02 & 6.08E-03 & 1.53E-03 & 4.25E-04 & 6.84E-05 & 4.52E-06 & 2.83E-07 \\[1pt]
rate & {---} & 1.00 & 1.00 & 0.92 & 1.32 & 1.96 & 2.00 \\
\midrule
{$\varepsilon_0 / 2^5$} & 2.55E-02 & 6.32E-03 & 1.57E-03 & 3.97E-04 & 1.11E-04 & 1.75E-05 & 1.15E-06 \\[1pt]
rate & {---} & 1.01 & 1.01 & 0.99 & 0.92 & 1.33 & 1.97 \\
\midrule
{$\varepsilon_0 / 2^7$} & 2.59E-02 & 6.40E-03 & 1.58E-03 & 3.95E-04 & 9.88E-05 & 2.51E-05 & 7.04E-06 \\[1pt]
rate & {---} & 1.01 & 1.01 & 1.00 & 1.00 & 0.99 & 0.92 \\
\midrule
{$\varepsilon_0 / 2^{9}$} & 2.43E-02 & 6.08E-03 & 1.51E-03 & 3.76E-04 & 9.39E-05 & 2.35E-05 & 5.92E-06 \\[1pt]
rate & {---} & 1.00 & 1.01 & 1.00 & 1.00 & 1.00 & 1.00 \\
\midrule
{$\varepsilon_0 / 2^{11}$} & 2.62E-02 & 6.46E-03 & 1.60E-03 & 3.98E-04 & 9.95E-05 & 2.49E-05 & 6.22E-06 \\[1pt]
rate & {---} & 1.01 & 1.01 & 1.00 & 1.00 & 1.00 & 1.00 \\
\midrule
{$\varepsilon_0 / 2^{13}$} & 2.67E-02 & 6.59E-03 & 1.63E-03 & 4.06E-04 & 1.01E-04 & 2.54E-05 & 6.35E-06 \\[1pt]
rate & {---} & 1.01 & 1.01 & 1.00 & 1.00 & 1.00 & 1.00 \\
\hline \midrule 
{$e^{*}_{\tau,h}(1)$} & 2.67E-02 & 6.83E-03 & 2.01E-03 & 4.25E-04 & 1.11E-04 & 2.54E-05 & 7.04E-06 \\[1pt]
rate & {---} & 0.98 & 0.88 & 1.12 & 0.97 & 1.06 & 0.92 \\
\bottomrule
\end{tabular}
\end{table}

Since the exact solution is not available, the ``exact'' solution is computed numerically by the MTI-FP method under well-prepared initial data \citep{bao2014uniformly} with very small mesh size $h = h_e =  1/32$ and time step $\tau = \tau_e = 1 \times 10^{-6}$. Table~\ref{t1} shows the spatial errors of the MTI-FP method \eqref{MTIu}-\eqref{init00} at $t_n=1$ for different $\varepsilon$ and $h$ with a very small time step $\tau = \tau_e$ such that the temporal discretization error is negligible. Table~\ref{t2} shows the temporal errors at $t_n=1$ for different $\varepsilon$ and $\tau$ with a very fine mesh $h = h_e $ such that the spatial discretization error  is negligible. In addition, Figure~\ref{Figuresup2} plots
the multiscale interpolation errors $|u(0,t)-U_{N/2}(t)|$ with $U_{N/2}(t)$ obtained via the interpolation \eqref{MTIint1}-\eqref{MTIint2} for different $\varepsilon$ and $\tau$ with $h=h_e$ (or $N = 2^{10}$). 

\begin{figure}[htbp]
\centering
\includegraphics[width=0.75\textwidth]{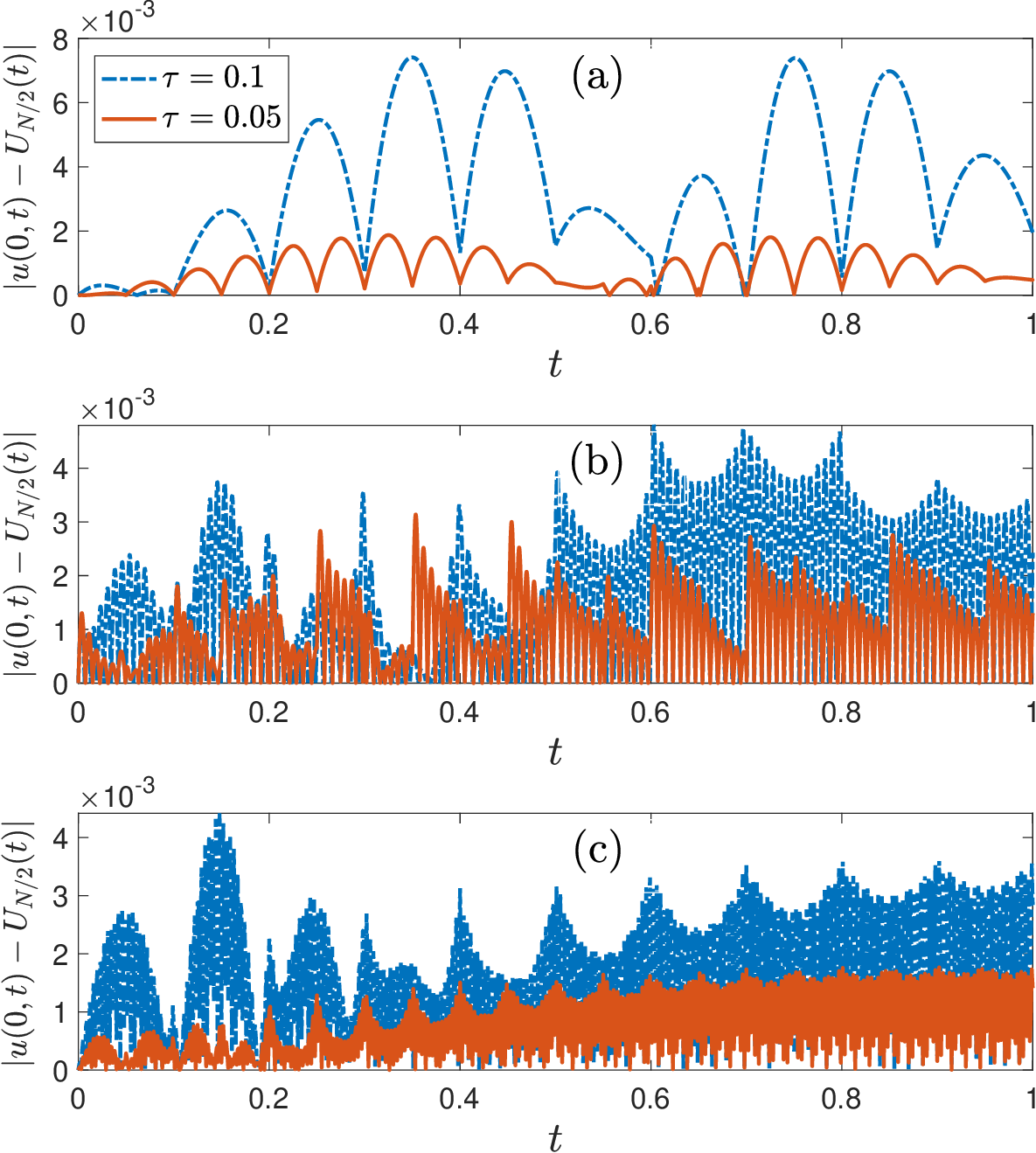}
\caption{Plots of the multiscale interpolation error $|u(0,t)-U_{N/2}(t)|$ with $U_{N/2}(t)$ obtained via the multiscale interpolation \eqref{MTIint1}-\eqref{MTIint2} for different $\varepsilon$ and $\tau$ with $N=2^{10}$:
(a) $\varepsilon=0.5$, (b) $\varepsilon=0.05$, and (c) $\varepsilon=0.005$.}
\label{Figuresup2}
\end{figure}

From Tables \ref{t1}-\ref{t2} and Figure \ref{Figuresup2}, we can draw the following conclusions
\begin{itemize}
\item The MTI-FP method \eqref{MTIu}-\eqref{init00} achieves spectral accuracy in space when the solution is smooth,
which is uniformly for $\varepsilon\in(0,1]$ (cf. Table \ref{t1}). It is second order accurate in time when $\tau\lesssim \varepsilon^2$ (cf. Table \ref{t2} upper triangle), and is first order accurate in time when $\varepsilon \lesssim \sqrt{\tau}$
(cf. Table \ref{t2} lower triangle).  The method obtains uniformly first order convergent rate in time for $\varepsilon\in(0,1]$
(cf. Table \ref{t2} last row). All these numerical results confirm our error bounds in Theorem \ref{tm4.1} and demonstrate 
that they are sharp.

\item The multiscale interpolation \eqref{MTIint1}-\eqref{MTIint2} is uniformly first-order accurate in time
with respect to $\varepsilon\in(0,1]$. The results confirm our error bounds in Theorem \ref{tm2}.
\end{itemize}

To illustrate the performance of the proposed MTI-FP method \eqref{MTIu}-\eqref{init00} 
in high dimensions, we take $d = 2$, $\lambda = 1$,
 $\phi_1(x,y) = \text{exp}(-x^2 - (y+2)^2) + \text{exp}(-x^2 - (y-2)^2)$ and 
 $\phi_2(x,y) = \text{exp}(-x^2 - y^2)$ in \eqref{1.1}. The bounded computational domain 
 is taken as $\Omega = (-20,20) \times (-20,20)$. Figure~\ref{Figure2D} shows time evolution
 of the solution $u$ obtained numerically for 
 $\varepsilon=1$ and $\varepsilon=0.01$.
 
\begin{figure}[htbp]
\centering
\includegraphics[width=0.78\textwidth]{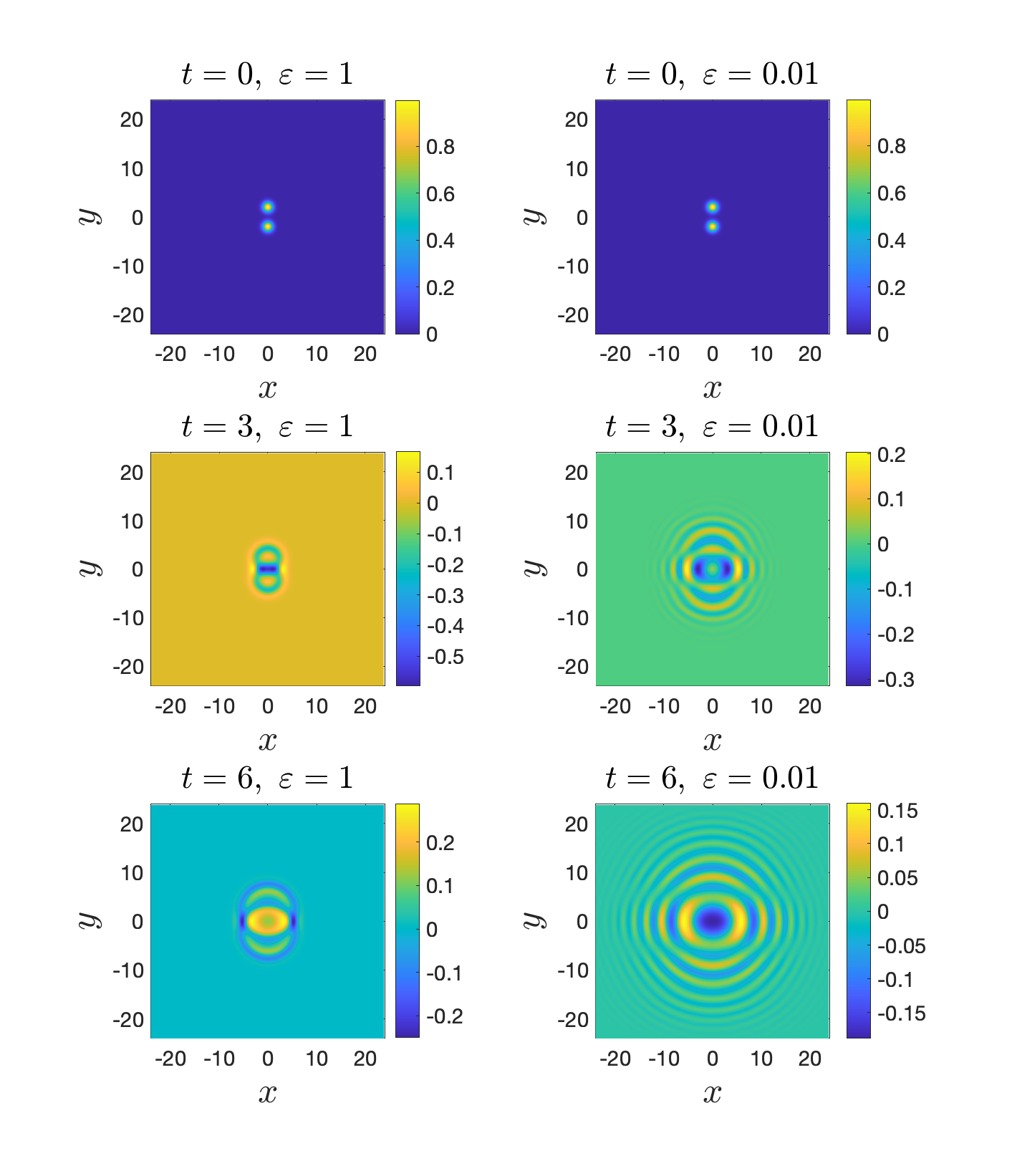}
\caption{Dynamics of the solution $u$ of the NKGE \eqref{1.1} in two dimensions for $\varepsilon= 1$ (left column) and $\varepsilon = 0.01$ (right column).}
\label{Figure2D}
\end{figure}

\begin{figure}[htbp]
\centering
\includegraphics[width=0.74
\textwidth]{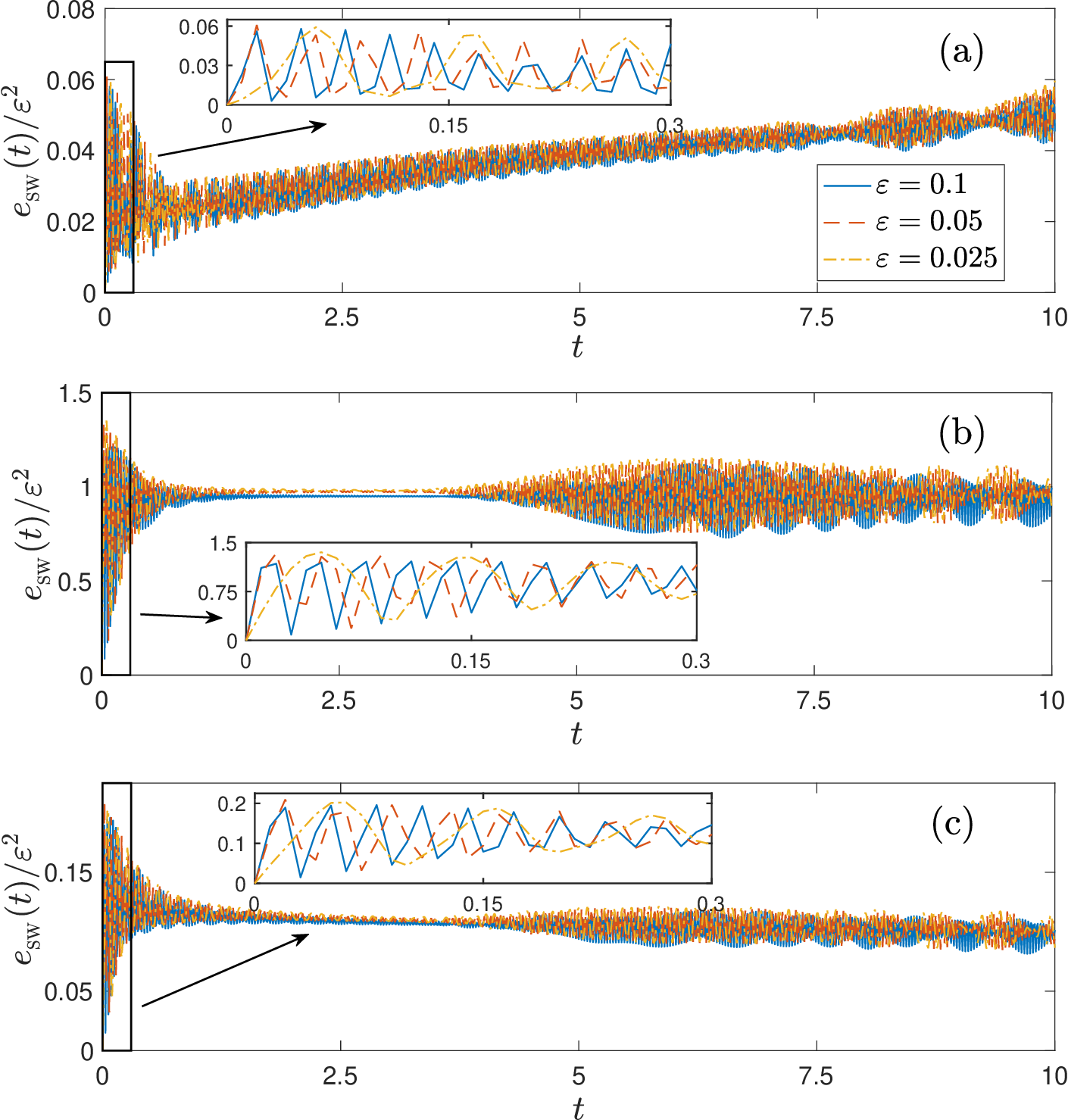}
\caption{Convergence of the NKGE \eqref{NKGE1d} to its limiting model NLSW \eqref{NLSW1} with different $\gamma$ in 1D:
(a) $\gamma(x)\equiv 0$, (b) $\gamma(x)$ is taken as the well-prepared initial data \eqref{2.12}, 
and (c) $\gamma(x) =\frac{3}{2} \lambda i |v_0(x)|^2 v_0(x)$ with $v_0(x)$ given in \eqref{initv23}.}
\label{Figurelim}
\end{figure}

\begin{figure}[htbp]
\centering
\includegraphics[width=0.68
\textwidth]{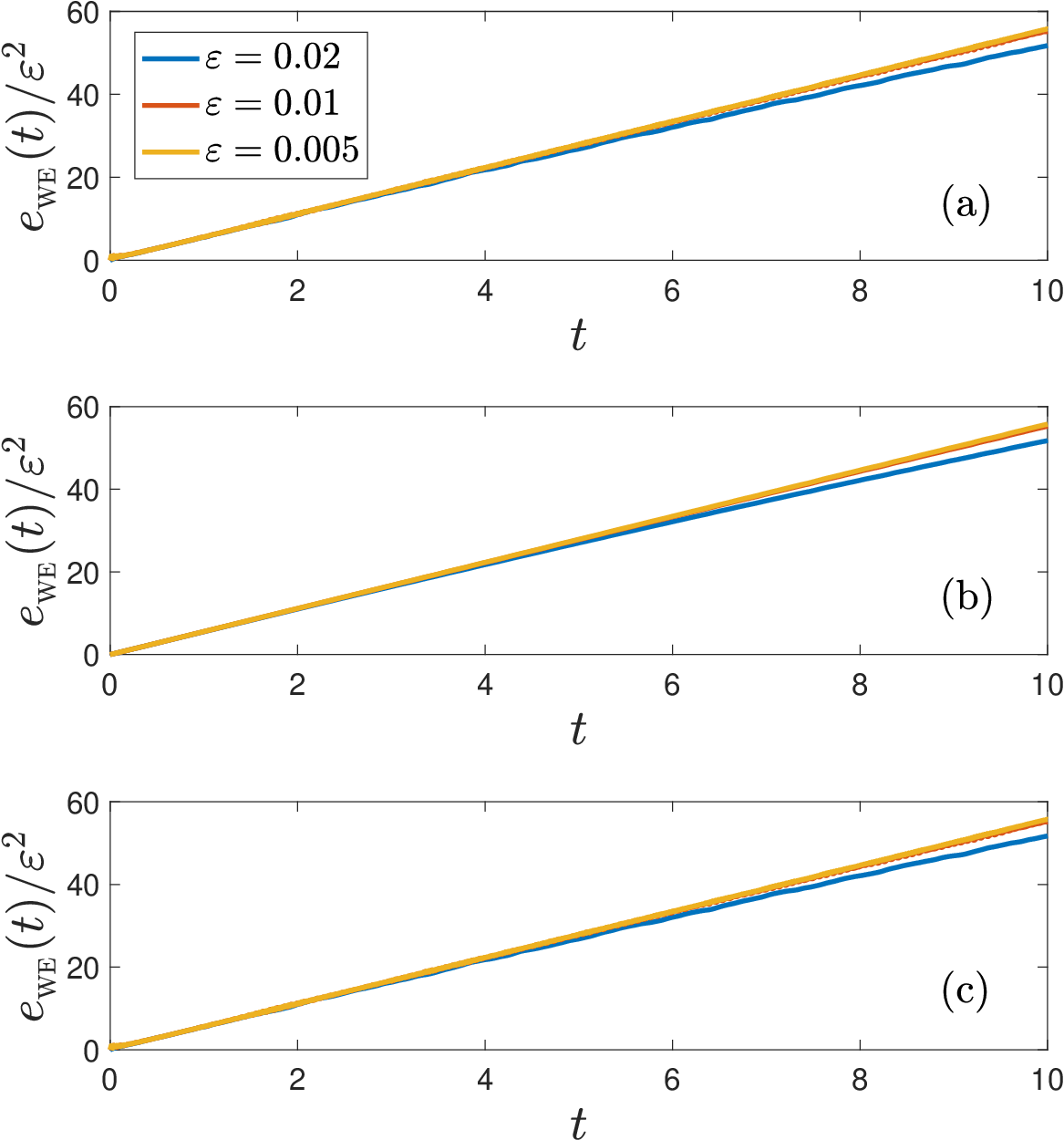}
\caption{Convergence of the NLSW \eqref{NLSW1} to its limiting model NLSE \eqref{NLSE1} with different $\gamma$ in 1D:
(a) $\gamma(x)\equiv 0$, (b) $\gamma(x)$ is taken as the well-prepared initial data \eqref{2.12}, 
and (c) $\gamma(x) =\frac{3}{2} \lambda i |v_0(x)|^2 v_0(x)$ with $v_0(x)$ given in \eqref{initv23}.}
\label{Figurelim2}
\end{figure}

\subsection{Convergent rates from NKGE to its limiting models}
Now we apply the proposed MTI-FP method \eqref{MTIu}-\eqref{init00} to study numerically 
convergent rates from the NKGE \eqref{1.1} to its limiting model NLSW \eqref{NLSW1} with different $\gamma$ and 
convergent rates from the NLSW \eqref{NLSW1} with different $\gamma$ to its limiting model NLSE \eqref{NLSE1}.   
The NLSW \eqref{NLSW1} is solved numerically by the exponential wave integrator Fourier pseudospectral (EWI-FP)
method \citep{bao2024optimal,hochbruck2010exponential} and the NLSE \eqref{NLSE1} is solved numerically by the time-splitting Fourier pseudospectral (TSSP) method \citep{bao2023improved,bao2024errors}.

Denote $v_{_{\rm SW}}:=v_{_{\rm SW}}(\bx,t)$ the solution of the NLSW \eqref{NLSW1} 
and $v_{_{\rm SE}}:=v_{_{\rm SE}}(\bx,t)$
the solution of the NLSE \eqref{NLSE1}. Define the error functions 
\begin{align*}
&e_{_{\rm SW}}(t) = \left\| u(\cdot,t) - \left[e^{it / \varepsilon^2} v_{_{\rm SW}}(\cdot,t) + e^{-it / \varepsilon^2} \overline{v_{_{\rm SW}}(\cdot,t)}\right] \right\|_{H^1},\\
&e_{_{\rm WE}}(t)=\left\|v_{_{\rm SW}}(\cdot,t)-v_{_{\rm SE}}(\cdot,t)\right\|_{H^1}, \qquad t\ge0.
\end{align*}
Figure \ref{Figurelim} plots the errors $e_{_{\rm SW}}(t)$ in one dimension (1D), i.e. we take $d=1$, $\lambda = 1$ and 
the initial data \eqref{5.1} in \eqref{1.1}, and $\Omega = (-16,16)$ in \eqref{NKGE1d} with a very fine mesh $h = 1/32$ and a time step $\tau = 1 \times 10^{-6}$, for different choices of $\gamma(x)$ in \eqref{NLSW1}.
Figure \ref{Figurelim2} depicts the errors $e_{_{\rm WE}}(t)$ under a similar set-up. We also carry out numerical study in two dimensions and three dimensions, and the conclusion is similar and thus the details are omitted here for brevity.

From Figures \ref{Figurelim} \& \ref{Figurelim2}, we can draw the following conclusions: 
(i) The NKGE \eqref{NKGE1d} converges to its limiting model NLSW \eqref{NLSW1} quadratically with respect to $\varepsilon$ (and globally in time) for different choices of $\gamma(\bx)$, i.e.
\begin{align}
    e_{_{\rm SW}}(t) \le C_\gamma  \varepsilon^2, \quad t \ge 0, \label{5.6}
\end{align}
where the positive constant $C_\gamma$ depends on $\gamma$, but is independent of $\varepsilon$ and time $t$.
In general, when $\gamma(\bx)\equiv 0$, the constant $C_\gamma$ becomes the minimum among different choices of $\gamma$
(cf. Fig. 4).
(ii) The NLSW \eqref{NLSW1} converges to its limiting model NLSE \eqref{NLSE1}
quadratically with respect to $\varepsilon$ for different choices of $\gamma(\bx)$, i.e.,
\begin{align}
    e_{_{\rm WE}}(t) \le (C_1+C_2 t)  \varepsilon^2, \quad t \ge 0, \label{5.7}
\end{align}
where the two positive constants $C_1$ and $C_2$ are independent of $\varepsilon$ and time $t$ (cf. Fig. 5).


\section{Conclusion} \label{sec7}
A uniformly accurate multiscale time integrator Fourier pseudospectral (MTI-FP) method was proposed and analyzed for solving the nonlinear Klein-Gordon equation (NKGE) in the nonrelativistic limit regime with a dimensionless parameter $0 < \varepsilon \le 1$. The method was designed based on two ingredients: (i) a multiscale decomposition by frequency of the NKGE in each time interval with simplified transmission conditions, and  (ii) an exponential wave integrator for temporal discretizaton and a Fourier pseudospectral method for spatial discretization. We established optimal 
error bounds for the MTI-FP method, which is uniformly accurate with respect to $\varepsilon\in (0,1]$.
In addition, a multiscale interpolation in time was presented for obtaining a uniformly accurate approximation of the solution at any time $t\ge0$ by adopting a linear interpolation of the micro variables within the multiscale decomposition in each time interval. Numerical results were reported to confirm our error bounds and to demonstrate their sharpness as well as to show quadratically convergent rates of the NKGE to its different limiting models in the nonrelativistic regime.

\section*{Funding}
The authors thank the anonymous reviewers for their valuable suggestions. This work was partially supported by the Ministry of Education of Singapore under its AcRF Tier 1 funding A-8003584-00-00.


\end{document}